\newcommand{\overbar}[1]{\mkern 1.5mu\overline{\mkern-1.5mu#1\mkern-1.5mu}\mkern 1.5mu}
\theoremstyle{plain}
\newtheorem{thm}{Theorem}[section] 
\newtheorem{prop}[thm]{Proposition}
\newtheorem{lemma}[thm]{Lemma}
\theoremstyle{definition}
\newtheorem{defn}[thm]{Definition} 
\newtheorem{example}[thm]{Example}
\newtheorem{remark}[thm]{Remark}
\definecolor{mygreen}{RGB}{28,172,0} 
\definecolor{mylilas}{RGB}{170,55,241}
\definecolor{mygray}{gray}{0.95}
\newcommand{\mscr}[1]{\mathscr{#1}}
\newcommand{\twid}[1]{\widetilde{#1}}
\newcommand{\ZZ}{\mathbb{Z}}
\newcommand{\RR}{\mathbb{R}}
\newcommand{\EE}{\mathbb{E}}
\newcommand{\al}{\alpha}
\newcommand{\ga}{\gamma}
\newcommand{\de}{\delta}
\newcommand{\ep}{\varepsilon}
\newcommand{\ta}{\theta}
\newcommand{\Ta}{\Theta}
\newcommand{\la}{\lambda}
\newcommand{\sa}{\sigma}
\newcommand{\De}{\Delta}
\renewcommand{\l}{\left}
\renewcommand{\r}{\right}
\newcommand{\defeq}{\vcentcolon=}
\DeclareMathOperator{\argmax}{argmax}
\newcommand{\iid}{\overset{\text{iid}}{\sim}}
\newcommand{\ol}{\overline}
\newcommand{\ul}{\underline}
\title{Simulation-based, Finite-sample Inference\\ for Privatized Data}
\author{Jordan Awan and Zhanyu Wang\\ Purdue University, Department of Statistics}
\date{}
\begin{document}
\setcounter{tocdepth}{2}

\maketitle


\begin{abstract}
Privacy protection methods, such as differentially private mechanisms, introduce noise into resulting statistics which often produces complex and intractable sampling distributions. In this paper, we propose a simulation-based ``repro sample'' approach to produce statistically valid confidence intervals and hypothesis tests, which builds on the work of \citet{xie2022repro}. We show that this methodology is applicable to a wide variety of private inference problems, appropriately accounts for biases introduced by privacy mechanisms (such as by clamping), and improves over other state-of-the-art inference methods such as the parametric bootstrap in terms of the coverage and type I error of the private inference. We also develop significant improvements and extensions for the repro sample methodology for general models (not necessarily related to privacy), including 1) modifying the procedure to ensure guaranteed coverage and type I errors, even accounting for Monte Carlo error, and 2) proposing efficient numerical algorithms to implement the confidence intervals and  $p$-values. 
\end{abstract}

\noindent%
{\it Keywords: Indirect Inference, Confidence Interval, Hypothesis Test, Differential Privacy}

\section{Introduction}
There is a growing interest in developing privacy-preserving methodologies, and differential privacy (DP) has arisen as the state-of-the-art in privacy protection \citep{dwork2006calibrating}. However, when formal privacy protection methods are applied to sensitive data, the published quantities may have a very different structure/distribution than the original data, which depends on the privatization algorithm. 
A major benefit of differential privacy is that the privacy mechanism -- the noise-adding distribution -- can be publicly communicated without compromising privacy. Thus, a statistician or data analyst can, in principle, incorporate the uncertainty due to the privacy mechanism into their statistical reasoning. Despite this potential, there is a need for general-purpose methods to conduct valid statistical inferences on privatized data. A major challenge is that, given a model $\ul x \sim f(\ul x\mid\theta)$ for the sensitive data and the privacy mechanism $s|\ul x \sim \eta(s\mid \ul x)$, the marginal likelihood function for $\theta$ based on $s$, 
\[L(\theta\mid s) = \int_{\ul x} f(\ul x\mid \theta) \eta(s\mid \ul x) \ d\ul x,\]
is often computationally intractable, requiring integration over all possible databases $\ul x$. Recently, there have arisen Markov chain Monte Carlo methods to sample from the posterior distribution $\pi(\theta\mid s)$ \citep{bernstein2018differentially,bernstein2019differentially,kulkarni2021differentially,ju2022data}, with \citet{ju2022data} giving a general-purpose technique for Bayesian inference on privatized data. However, there are not as many general-purpose frequentist methods. Due to the intractability of the marginal likelihood, most common approaches use approximations, such as the parametric bootstrap \citep{ferrando2022parametric} or limit theorems \citep{wang2018statistical}. 

Unfortunately, while these approximate techniques are often reliable in non-private settings, they often do not have finite sample guarantees and may give unacceptable accuracy when applied to privatized data. For example, while the noise for privacy is often asymptotically negligible compared to the statistical estimation rate, researchers have found that arguments based on convergence in distribution often result in unsatisfactory approximations \citep{wang2018statistical}, resulting in unreliable inference. While alternative asymptotic regimes have been proposed (e.g., \citealp{wang2018statistical}), they are limited to specific models, mechanisms, and test statistics. \citet{ferrando2022parametric} proposed using the parametric bootstrap on privatized data, but in order to prove the consistency of their method, they assumed that the data was not clamped, while clamping is often necessary in DP mechanisms. Thus, as we demonstrate in this paper, the parametric bootstrap often gives unacceptable accuracy on privatized data,  likely due to the bias in the privatized estimator.

While the marginal likelihood may not be easily evaluated, it is often the case that both $f(\ul x\mid \theta)$ and $\eta(s\mid \ul x)$ are relatively easy to sample from. This aspect makes simulation-based inference a good candidate for analyzing privatized data. Our simulation-based (also known as indirect inference) framework is inspired by the ``repro samples'' framework of \citet{xie2022repro}, and the general setup is similar to  other simulation-based works as well \citep{guerrier2019simulation}, and is widely applicable to statistical problems, beyond DP. 

We assume that rather than directly sampling from our model/mechanism, given a parameter $\theta$, we can separately sample a ``seed'' $u$ from a known distribution $P$ (not involving $\theta$), and apply a function that combines the seed and the parameter to produce the observed random variable: $s=G(\theta,u)$. Intuitively, $s=G(\theta,u)$ can be thought of as the equation that generated the observed $s$, where $u$ is the source of randomness. Since the true parameter is unknown, in simulation-based inference, we simulate seeds $\{u_i\}_{i=1}^R$ from $P$ and hold these fixed during our search over the parameter space for the parameter(s) $\hat{\theta}$ that make the values $G(\hat\theta,u_1),\ldots, G(\hat\theta,u_R)$ ``similar'' to our observed  $s$, where ideally a higher similarity measure will result in better estimates $\hat\theta$ of $\theta$. { Intuitively, a confidence set consists of all $\hat\theta$ which produces samples ``similar'' to $s$; likewise, for a null hypothesis $H_0: \theta\in \Theta_0$, if no $\hat\theta\in \Theta_0$ results in samples ``similar'' to $\theta$, then we reject $H_0$. We give a construction for a broad class of ``similarity'' measures that allow us to give finite-sample guarantees for these confidence sets and hypothesis tests.} 


{\bf \noindent Our Contributions } In this paper { we propose a general-purpose simulation-based inference framework based on repro samples \citep{xie2022repro}, which is motivated by the challenges in analyzing privatized data, and results in the following contributions:}

\begin{itemize}
\item We modify the repro samples methodology { using simulation-based inference techniques} to ensure guaranteed coverage and type I errors, even accounting for Monte Carlo errors, and present the repro methodology with a new notation to better communicate the applicability of the framework.
\item We propose computationally efficient algorithms for computing $p$-values and confidence intervals/sets, { which maintain the theoretical validity}. 
\item We use our methodology to tackle several DP inference problems, giving provable inference guarantees and improving over state-of-the-art methods (such as the parametric bootstrap.)
\item We demonstrate that our methodology can account for biases introduced by DP mechanisms, such as due to clamping or other non-linear transformations. 
\end{itemize}

We emphasize that while our methodology is motivated by problems in privacy, our framework is general-purpose and can be applied to non-DP settings as well. Our approach is applicable when one either has a test statistic, or only a low dimensional summary statistic but need not have an estimator for the true parameters, let alone an understanding of the sampling distribution.

{\bf \noindent Organization }The remainder of the paper is organized as follows: In Section \ref{s:background} we set the notation for the paper and review some necessary background on differential privacy. In Section \ref{s:CI}, we start by giving an abstract framework for generating confidence sets in Section \ref{s:conf:abstract} { and show how our framework is related to the repro sample framework of \citet{xie2022repro} in Section \ref{s:repro}. Then we use our abstract framework} to develop simulation-based confidence sets in Section \ref{s:conf:simulation} with guaranteed coverage, and develop a numerical algorithm for simulation-based confidence intervals in Section \ref{s:conf:numerical}. In Section \ref{s:pvalue}, we review how repro samples can be used to test hypotheses, and give a { simulation-based method } for valid $p$-values, as well as an efficient numerical algorithm. In Section \ref{s:simulations}, we conduct several simulation studies, applying our methodology to various DP inference problems. We conclude in Section \ref{s:discussion} with some discussion and directions for future work. In Section \ref{s:cost}, we specifically discuss the choice of the test statistic in the repro methodology, and quantify the cost of over-coverage, which is common when using { our simulation-based approach}. 
Proofs and technical lemmas, as well as simulation details and additional simulation results are deferred to the supplement. Code to replicate the experiments of this paper can be found at \url{https://github.com/Zhanyu-Wang/Simulation-based_Finite-sample_Inference_for_Privatized_Data}.

{\bf \noindent Related work } The area of indirect inference was proposed by \citet{gourieroux1993indirect}, which initially gave a simulation-based approach to bias correction and parameter estimation. However, the inferential method proposed by \citet{gourieroux1993indirect} is based on large sample theory, which is often unreliable in privacy problems \citep{wang2018statistical}. \citet{guerrier2019simulation,guerrier2020asymptotically,zhang2022flexible} developed a more theoretical basis for the indirect inference approach, focusing on bias-correction. \citet{xie2022repro} developed the repro sample methodology, { and both their abstract framework and Monte Carlo methods serve as} the basis for the simulation-based inference techniques proposed in this paper. \citet{wang2022finite} applied the repro sample methodology to develop model and coefficient inference in high-dimensional linear regression. 

In the area of statistical inference on privatized data, there are several notable works. For Bayesian inference on privatized data, \citet{ju2022data} proposed a general Metropolis-within-Gibbs algorithm, which correctly targets the posterior distribution for a wide variety of models and privacy mechanisms. In the frequentist setting, \citet{wang2018statistical} developed a custom asymptotic framework that ensures, under certain circumstances, that the approximations produced are at least as accurate as similar approximations for the non-private data. While lacking finite-sample guarantees, \citet{ferrando2022parametric} showed that the parametric bootstrap is a very flexible technique to perform inference on privatized data.  

There have also been several works that tailor their statistical inference to particular models and mechanisms. 
\citet{awan2018differentially} developed uniformly most powerful DP tests for Bernoulli data, and \citet{awan2020differentially} extended this work to produce optimal confidence intervals as well. \citet{drechsler2022nonparametric} developed private non-parametric confidence intervals for the median of a univariate random variable. \citet{karwa2018finite} developed private confidence intervals for the mean of normally distributed data. \citet{covington2021unbiased} developed a DP mechanism to generate confidence intervals for general parameter estimation, based on the CoinPress algorithm \citep{biswas2020coinpress} and the bag of little bootstraps \citep{kleiner2012big}. 

In contrast with many of the related works above, the goal of this paper is to produce valid finite-sample frequentist inference for a wide variety of models and mechanisms. In particular, we will not ask that the privacy mechanism be tailored for our model or for our particular statistical task. While \citet{ju2022data} offered a general solution in the Bayesian setting, there are essentially no prior frequentist techniques to derive valid finite-sample inference for general models and mechanisms. 

\section{Background}\label{s:background}
In this section, we review the necessary background and set the notation for the paper. 

We call a function $T:\mscr X\times \mscr X^n\rightarrow \mscr Y$ \emph{permutation-invariant} if for any permutation $\pi$ on $\{1,2,\ldots, n\}$, we have $T(x;(x_i)_{i=1}^n)=T(x;(x_{\pi(i)})_{i=1}^n)$. Note that if $X_1,\ldots, X_n$ are exchangeable random variables in $\mscr X$, and $T$ is a permutation-invariant function, then the sequence of random variables, $T(X_1;(X_i)_{i=1}^n), T(X_2;(X_i)_{i=1}^n),\ldots, T(X_n;(X_i)_{i=1}^n)$ is also exchangeable. 

For a real value $x$, we define $\lfloor x\rfloor$ to be the greatest integer less than or equal to  $x$, and $\lceil x \rceil$ to be the smallest integer greater than or equal to  $x$. We define the \emph{clamp} function $[\cdot]_a^b:\RR\rightarrow [a,b]$ as $[x]_a^b=\min\{\max\{x,a\},b\}$. Many differentially private mechanisms use clamping to ensure finite sensitivity; see Example \ref{ex:additive}.

In this paper, we will use $s$ to denote the observed ``sample.'' While in many statistical problems, a sample consists of i.i.d.\ data, in this paper, we allow $s$ to generally be the set of observed random variables from an experiment. In differential privacy, the observed values after privatization are often low-dimensional quantities; see Example \ref{ex:binomial} below. { When working with a list of samples, $s_1,\ldots, s_R$, we use $s_i^{(j)}$ to denote the $j^{th}$ entry of the vector $s_i$.}

\begin{example}[Bernoulli example]\label{ex:binomial}
One of the simplest models is independent Bernoulli data. Suppose that $x_i \iid \mathrm{Bern}(\theta)$, and $(x_1,x_2,\ldots,x_n)$ represents a confidential dataset. If we are interested in doing inference on $\theta$, the total $X=\sum_{i=1}^n x_i$ is a sufficient statistic, but for privacy purposes, we will instead base our inference solely on $\tilde X=X + N$, where $N$ is a random variable independent of $X$ chosen such that $\tilde X$ satisfies differential privacy \citep{vu2009differential,awan2018differentially}. In this paper, we consider our ``sample'' to be the observed value $s\defeq \tilde X$.
\end{example}

\subsection{Differential privacy}\label{s:dp}
Differential privacy, introduced by \citet{dwork2006calibrating}, is a probabilistic framework used to quantify the privacy loss of a \emph{mechanism} (randomized algorithm). In the big picture, differential privacy requires that for any two \emph{neighboring} databases -- differing in one person's data -- the resulting distributions of outputs are ``close''. There have been many variants of differential privacy proposed, which alter both the notion of ``neighboring'' as well as ``close''-ness. Typically, the neighboring relation is expressed in terms of an \emph{adjacency} metric on the space of input databases, and the closeness measure is formulated in terms of a divergence or constraint on hypothesis tests. 

If $\mscr D$ is the space of input datasets, a metric $d:\mscr D\times \mscr D\rightarrow \RR^{\geq 0}$ is an \emph{adjacency} metric if $d(D,D')\leq 1$ represents that $D$ and $D'$ differ by one individual. When $d(D,D')\leq 1$, we say that $D$ and $D'$ are \emph{neighboring} or \emph{adjacent} datasets. A \emph{mechanism} $M:\mscr D\rightarrow \mscr Y$ is a randomized algorithm; more formally, for each $D\in \mscr D$, $M(D)|D$ is a random variable taking values in $\mscr Y$.
\begin{defn}
[Differential privacy: \citet{dwork2006calibrating}]\label{def:dp}
Let $\ep\geq 0$, let $d$ be an adjacency metric on $\mscr D$, and let $M:\mscr D\rightarrow \mscr Y$ be a mechanism. We say that $M$ satisfies $\ep$-differential privacy ($\ep$-DP) if $P(M(D)\in S) \leq \exp(\ep) P(M(D')\in S)$, 
for all $d(D,D')\leq 1$ and all measurable sets $S\subset \mscr Y$. 
\end{defn}

In many problems, the space of datasets can be expressed as $\mscr D = \mscr X^n$, where $\mscr X$ represents the space of possible contributions from one individual and $n$ is fixed. In this case, it is common to take the adjacency metric to be Hamming distance, which counts the number of different entries between two databases. This setup is commonly referred to \emph{bounded DP}. 
The inference framework proposed in this paper is applicable to general database spaces and metrics, but the examples bounded DP unless otherwise noted.

Another common formulation of DP is Gaussian-DP, where the measure of ``closeness'' is expressed in terms of hypothesis tests.

\begin{defn}
[Gaussian DP: \citealp{dong2022gaussian}]
Let $\mu\geq 0$, let $d$ be an adjacency metric on $\mscr D$, and let $M:\mscr D\rightarrow \mscr Y$ be a mechanism. We say that $M$ satisfies $\mu$-Gaussian DP ($\mu$-GDP) if for any two adjacent databases $D$ and $D'$ satisfying $d(D,D')\leq 1$, the type II error of any hypothesis test on 
$H_0: Z\sim M(D)$ versus $H_1: Z\sim M(D')$, where $Z$ is the observed output of $M$, 
is no smaller than $\Phi(\Phi^{-1}(1-\alpha)-\mu),$ 
where $\alpha$ is the type I error, and $\Phi$ is the cumulative distribution function (cdf) of a standard normal random variable. 
\end{defn}

One can interpret GDP as follows: a mechanism satisfies $\mu$-GDP if testing $H_0: Z \sim M(D)$ versus $H_1: Z\sim M(D')$ for adjacent databases is at least as hard as testing $H_0: Z\sim N(0,1)$ versus $H_1: Z\sim N(\mu,1)$. 

Differential privacy satisfies basic properties such as composition and post-processing:
\begin{itemize}
    \item \emph{Composition}: if a mechanism $M_1:\mscr D\rightarrow \mscr Y$ satisfies $\ep_1$-DP ($\mu_1$-GDP) and mechanism $M_2:\mscr D \rightarrow \mscr Z$ satisfies $\ep_2$-DP ($\mu_2$-GDP), then the composed mechanism which releases both $M_1(D)$ and $M_2(D)$ satisfies $(\ep_1+\ep_2)$-DP ($\sqrt{\mu_1^2+\mu_2^2}$-GDP). 
\item \emph{Post-processing}: if a mechanism $M:\mscr D\rightarrow \mscr Y$ satisfies $\ep$-DP ($\mu$-GDP) and $f:\mscr Y\rightarrow \mscr Z$ is another mechanism, then $f\circ M:\mscr D\rightarrow \mscr Z$ satisfies $\ep$-DP ($\mu$-GDP). 
\end{itemize}


\begin{example}[Additive noise mechanism]\label{ex:additive}
Given a statistic $T:\mscr D\rightarrow \RR^d$, and a norm $\lVert \cdot \rVert$ on $\RR^d$, an \emph{additive noise mechanism} samples a random vector $N$ independent of the dataset, and releases $\twid T(D) =T(D)+\Delta N$, where $\Delta = \sup_{d(D,D)\leq 1}\lVert T(D)-T(D')\rVert$ is the \emph{sensitivity} of $T$. The \emph{Laplace mechanism}  uses the $\ell_1$ norm and samples $N_i\iid \mathrm{Laplace}(0,1/\ep)$ for $i=1,\ldots, d$; then $\twid T(D)$ satisfies $\ep$-DP. The \emph{Gaussian mechanism} uses  the $\ell_2$ norm and samples $N\sim N_d(0,(1/\mu^2)I)$ and  $\twid T(D)$ satisfies $\mu$-GDP. 

Given a real-valued statistic of the form $T(D) = \frac 1n\sum_{i=1}^n t(x_i)$, where $D=(x_1,\ldots, x_n)$, many DP mechanisms  will often first alter $T$ using a clamp function, to ensure finite sensitivity: $T'(D) = \frac 1n \sum_{i=1}^n [t(x_i)]_a^b$, and then apply an additive noise mechanism to $T'(D)$. However, clamping is a non-linear function, which results in a bias.
\end{example}

\section{Confidence intervals}\label{s:CI}

Confidence intervals/sets are a fundamental statistical tool, which concretely communicate the uncertainty of a parameter estimate. In Section \ref{s:conf:abstract}, we describe a general and abstract framework for constructing a confidence set, which encompasses both traditional methods as well as the simulation-based method explored in this paper. { In Section \ref{s:repro} we show that the repro sample methodology of \citet{xie2022repro} can be expressed as a special case of our framework, and discuss the limitations of the Monte Carlo methods of \citet{xie2022repro}.} In Section \ref{s:conf:simulation}, we show that, using simulation-based inference, we can construct valid confidence sets with guaranteed coverage, even accounting for the randomness in the simulation. In Section \ref{s:conf:numerical}, we give a concrete algorithm to { find the smallest confidence interval containing the confidence set derived in Section \ref{s:conf:simulation} as well as a grid-based confidence set.}



\subsection{Abstract confidence sets}\label{s:conf:abstract}
In this section, we set up a general and abstract framework for constructing a confidence set for an unknown parameter $\theta$ in Lemma \ref{lem:confidence}.

{ The abstraction in Lemma \ref{lem:confidence} is motivated by the framework in \citet{xie2022repro}, but we present our framework using a different notation and perspective; in Section \ref{s:repro} we show that the framework of \citet{xie2022repro} is a special case of Lemma \ref{lem:confidence}. }

\begin{restatable}{lem}{lemconfidence}\label{lem:confidence}
Let $\alpha \in (0,1)$ be given. Let $\theta^*\in \Theta$ be the unknown parameter, $s$ be the observed sample where $s\sim F_{\theta^*}$, and $\omega \sim Q$ be a random variable which is independent of $s$. 
For any fixed $\theta$, let $B_\alpha(\theta;s,\omega)$ be an event, which depends on $s$ and $\omega$, such that for all $\theta \in \Theta$, 
\begin{equation}\label{eq:predictionSet}
P_{ s\sim F_\theta,\omega\sim Q}(B_\alpha(\theta; s,\omega))\geq 1-\alpha.
\end{equation}
Then 
\begin{equation}\label{eq:CI}
\Gamma_\alpha(s,\omega)\defeq \{\theta\mid I(B_\alpha(\theta;s,\omega))=1\},
\end{equation}
is a $(1-\alpha)$-confidence set for $\theta^*$.  
If $\theta = (\theta_1, \ldots, \theta_k)$ { can be decomposed to $\beta$ and $\eta$ where $\beta=(\theta_{i_1}, \cdots, \theta_{i_m})$, $1\leq i_1 < \cdots < i_m \leq k$ and $\eta$ contains the rest $\theta_i$ not included in $\beta$, denoted by $\theta=(\beta,\eta)$}, 
then 
\begin{equation}\label{eq:project}
\Gamma^\beta_\alpha(s,\omega) = \{\beta\mid \exists \eta \text{ s.t. } I(B_\alpha((\beta,\eta);s,\omega))=1\},
\end{equation}
is a $(1-\alpha)$-confidence set for $\beta$. More generally, if $\theta = (\theta_1,\ldots, \theta_k)$, then $\Gamma_\alpha^{\theta_1}(s,\omega),\ldots, \Gamma_{\alpha}^{\theta_k}(s,\omega)$ are simultaneous $(1-\alpha)$-confidence sets for $\theta_1^*,\ldots, \theta_k^*$. 
\end{restatable}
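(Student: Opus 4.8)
The plan is to deduce everything from the defining coverage property \eqref{eq:predictionSet}. First I would establish the main claim about $\Gamma_\alpha(s,\omega)$: fix the true parameter $\theta^*$ and simply instantiate \eqref{eq:predictionSet} at $\theta=\theta^*$. Since the observed sample satisfies $s\sim F_{\theta^*}$ and $\omega\sim Q$ is drawn independently, the hypothesis of \eqref{eq:predictionSet} gives $P_{s\sim F_{\theta^*},\,\omega\sim Q}\big(B_\alpha(\theta^*;s,\omega)\big)\geq 1-\alpha$. By the definition \eqref{eq:CI} of $\Gamma_\alpha$, the event $B_\alpha(\theta^*;s,\omega)$ occurs if and only if $\theta^*\in\Gamma_\alpha(s,\omega)$, so $P(\theta^*\in\Gamma_\alpha(s,\omega))\geq 1-\alpha$, which is exactly the confidence-set guarantee. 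I should be a little careful that $I(B_\alpha(\theta;s,\omega))=1$ is just shorthand for "the event $B_\alpha(\theta;s,\omega)$ holds," so that membership in $\Gamma_\alpha$ is genuinely equivalent to occurrence of the event; this is a notational point rather than a mathematical obstacle.

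Next I would handle the projection claim for $\beta$. Writing $\theta^*=(\beta^*,\eta^*)$, the key observation is a one-line set inclusion: whenever $\theta^*\in\Gamma_\alpha(s,\omega)$, the parameter $\eta=\eta^*$ witnesses that $\beta^*$ satisfies the existential condition in \eqref{eq:project}, hence $\beta^*\in\Gamma^\beta_\alpha(s,\omega)$. Therefore $\{\theta^*\in\Gamma_\alpha(s,\omega)\}\subseteq\{\beta^*\in\Gamma^\beta_\alpha(s,\omega)\}$ as events, and monotonicity of probability together with the first part gives $P(\beta^*\in\Gamma^\beta_\alpha(s,\omega))\geq P(\theta^*\in\Gamma_\alpha(s,\omega))\geq 1-\alpha$. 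The same argument applied coordinatewise with $\beta$ taken to be each single coordinate $\theta_j$ shows that on the single event $\{\theta^*\in\Gamma_\alpha(s,\omega)\}$ — which has probability at least $1-\alpha$ — we simultaneously have $\theta_j^*\in\Gamma_\alpha^{\theta_j}(s,\omega)$ for all $j=1,\ldots,k$; this is precisely the meaning of simultaneous $(1-\alpha)$-confidence sets, so no union bound or multiplicity correction is needed.

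There is no serious obstacle here: the lemma is essentially a packaging of "invert a family of valid acceptance regions," and the only thing to get right is bookkeeping — that the coverage hypothesis \eqref{eq:predictionSet} is required to hold for \emph{every} $\theta\in\Theta$ (so in particular at the unknown $\theta^*$, whose value we never need to know), and that the independence of $\omega$ from $s$ together with $s\sim F_{\theta^*}$ is exactly what makes the joint law in \eqref{eq:predictionSet} match the law of the observed $(s,\omega)$. If I wanted to be thorough I would also note a measurability remark — that $B_\alpha(\theta;s,\omega)$ should be jointly measurable in an appropriate sense so that $\Gamma_\alpha$ and its projections are measurable sets and the displayed probabilities are well defined — but this is routine and can be stated as a standing assumption. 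The mild "hard part," if any, is purely expository: making clear that the simultaneity in the last sentence comes for free because all the marginal events contain the common event $\{\theta^*\in\Gamma_\alpha(s,\omega)\}$.
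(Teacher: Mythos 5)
Your proposal is correct and follows essentially the same route as the paper: instantiate \eqref{eq:predictionSet} at the true $\theta^*$ for the first claim, then observe that the event $\{\theta^*\in\Gamma_\alpha(s,\omega)\}$ is contained in each of the projected coverage events (equivalently, that the projections and their Cartesian product contain $\Gamma_\alpha(s,\omega)$), so the marginal and simultaneous guarantees follow by monotonicity with no union bound. Your added remarks on measurability and on the role of independence of $\omega$ are sensible but not needed beyond what the paper assumes.
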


\begin{figure}[t]
    \centering
    \includegraphics[width=1\linewidth]{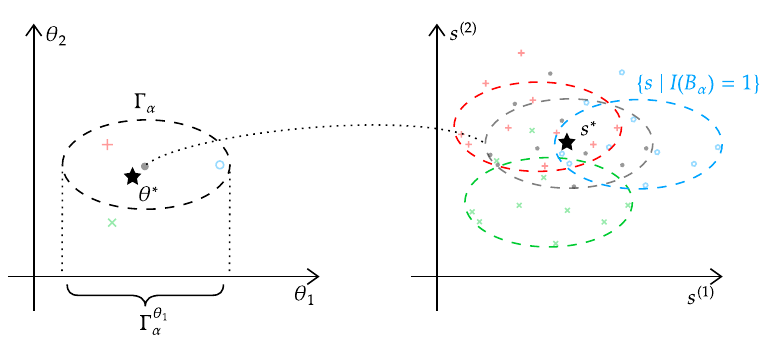}
    \caption{An illustration for $B_\alpha(\theta;s,\omega)$ and $\Gamma_\alpha(s,\omega)$ in Lemma \ref{lem:confidence}. The left subfigure is the space of parameter $\theta\in\mathbb{R}^2$, and the right subfigure is the space of $s\in\mathbb{R}^2$. The true parameter is $\theta^*$, and the observed sample is $s^*$, where both are denoted by $\bigstar$. For each $\theta$, we can construct $B_\alpha(\theta;s,\omega)$ (e.g., by simulation as in Theorem \ref{thm:construction}: the points in the right subfigure correspond to $\{s_i\}_{i=1}^R$), and we obtain $\Gamma_\alpha(s,\omega)$ by collecting the $\theta$ such that its corresponding $\{s|I(B_\alpha(\theta;s,\omega)) = 1\}$ contains $s^*$. For example, in the left subfigure, the parameters denoted by the red $+$ and blue $\circ$ are in $\Gamma_\alpha(s,\omega)$ while the one by green $\times$ is not. Note that $\Gamma_\alpha(s,\omega)$ is a valid confidence set with level $1-\alpha$ since for every $\theta \in \Theta$, we have $s \sim F_\theta$ being in $\{s|I(B_\alpha(\theta-;s,\omega)) = 1\}$ with probability $1-\alpha$, which means $P(\theta\in \Gamma_\alpha(s,\omega)) \geq 1-\alpha$. Furthermore, the confidence set $\Gamma_\alpha^{\theta_1}(s,\omega)$ is shown in the left subfigure, illustrating that it is the projection of $\Gamma_\alpha(s,\omega)$ onto the component $\theta_1$.}
    \label{fig:repro_illustration}
\end{figure}

One way of understanding the construction in Lemma \ref{lem:confidence} is as follows: for every candidate value of $\theta$, build an event $B_\alpha(\theta;s,\omega)$ which happens with probability $(1-\alpha)$ over the randomness of $s\sim F_\theta$ and $\omega\sim Q$. Then, given an observed pair $(s,\omega)$, our confidence set is simply the set of $\theta$ values for which $B_\alpha(\theta;s,\omega)$ did in fact happen. See Figure \ref{fig:repro_illustration} for an illustration. { Since $\omega$ is not part of our sample $s$, $\omega$ can be viewed as an extra source of randomness, which results in \emph{randomized} confidence sets. In Section \ref{s:conf:simulation}, we will use $\omega$ to account for the randomness in a simulation-based confidence interval.}

\begin{remark}
    We remark that the confidence sets $\Gamma_\alpha^{\theta_i}(s,\omega)$ can be viewed as a projection of $\Gamma_\alpha(s,\omega)$ onto the dimensions involving $\theta_i$, which is shown in the left subfigure in Figure \ref{fig:repro_illustration}. It is because each $\Gamma_\alpha^{\theta_i}(s,\omega)$ is implicitly constructed from a $\Gamma_\alpha(s,\omega)$ that we get the simultaneous coverage of $\Gamma_\alpha^{\theta_1}(s,\omega),\ldots, \Gamma_\alpha^{\theta_k}(s,\omega)$. As we will discuss in Section \ref{s:cost}, a downside of this construction is that the simultaneous nature of the confidence sets  can lead to over-coverage for any one of the $\Gamma_\alpha^{\theta_i}(s,\omega)$. 
\end{remark}

{ Most traditional constructions of confidence intervals fit in the framework of Lemma \ref{lem:confidence}, as illustrated in Example  \ref{ex:normalEasy}.} 

\begin{example}\label{ex:normalEasy}
We show how the classical method of deriving a confidence interval for the location parameter of the normal distribution fits into the framework of Lemma \ref{lem:confidence}. In this case, we will not need the random variable $\omega$. Let $s=(x_1,\ldots, x_n)$ where $x_i\iid N(\theta^*,\sigma^2_0)$, where only $\theta^*$ is unknown. Then $T_\theta(s)=\frac{\ol x-\theta}{\sigma_0/\sqrt{n}}\sim N(0,1)$ is a pivot where $\ol x = \frac{1}{n}\sum_{i=1}^n x_i$. We then take $C=[-z_{1-\alpha/2},z_{1-\alpha/2}]$, which is a $(1-\alpha)$ prediction interval for $T_{\theta^*}(s)$, where $\theta^*$ is the true parameter. Then $B_\alpha(\theta;s)=\{T_\theta(s)\in C\}$, and our confidence set is 
\begin{align*}
    \Gamma_\alpha(s)&=\{\theta| I(B_\alpha(\theta;s))=1\}\\
     &=\{\theta|T_\theta(s)\in [-z_{1-\alpha/2},z_{1-\alpha/2}]\}\\
     &=\left\{\theta\middle|-z_{\alpha/2}\leq \frac{\ol x-\theta}{\sigma_0/\sqrt n}\leq z_{\alpha/2}\right\}\\
     &=\left\{\theta\middle | \ol x-\sigma_0 z_{1-\alpha/2}/\sqrt n\leq \theta\leq \ol x+\sigma_0 z_{1-\alpha/2}/\sqrt n\right\}\\
     &=[\ol x-\sigma_0 z_{1-\alpha/2}/\sqrt n, \ol x+\sigma_0 z_{1-\alpha/2}/\sqrt n],
\end{align*}
which is the usual confidence interval for the mean.
\end{example}

Example \ref{ex:bern} below gives a DP example where the sampling distribution of $s$ can be evaluated numerically. However, in most DP problems, this is not the case. For the remainder of the paper, we will not assume that this sampling distribution is available, and in Section \ref{s:conf:simulation}, we will show how the auxiliary random variable $\omega$ allows us to use simulation-based inference to construct an event $B_\alpha$ satisfying \eqref{eq:predictionSet}.

\begin{example}[Bernoulli distribution: \citealp{awan2018differentially}]\label{ex:bern}
\citet{awan2018differentially} derived the uniformly most powerful DP hypothesis tests for Bernoulli data and showed that they could be expressed in terms of the test statistic $X+N$, where $X\sim \mathrm{Binomial}(n,p^*)$ and $N\sim \mathrm{Tulap}(0,b,q)$, where $X$ is based on the privatized data, and the parameters of the Truncated-Uniform-Laplace (Tulap) distribution\footnote{The Tulap distribution is closely related to the Staircase distributions \citep{geng2014optimal} and the Discrete Laplace distribution (used in the geometric mechanism) \citep{inusah2006discrete,ghosh2009universally}.} depend on the privacy parameters $\ep$ and $\de$. \citet{awan2018differentially} gave a closed-form expression for the cdf of $N$, and showed that the cdf of $X+N$ can be expressed as 
\[F_{X+N}(t) = \sum_{x=0}^n F_N(t-x)f_X(x),\]
where $f_X$ is the pmf of $X$, and $F_N$ is the cdf of $N$. This cdf is then used to derive $p$-values and confidence intervals  for $p^*$ in \citet{awan2018differentially,awan2020differentially}.
\end{example}

In the previous example, the convolution of $X$ and $N$ was tractable since there were only two variables, the pmf/cdf of $X$ and $N$ are easily evaluated, and $X$ took on only a finite number of values. \citet{awan2023canonical} showed that one could use the inversion of characteristic functions to numerically evaluate the cdf when convolving multiple known distributions, which they applied to the testing of two population proportions. In Section B.1 of the supplement, we show that for exponentially distributed data, we can apply this trick to account for clamping. However, it is not always tractable to evaluate the characteristic function of a clamped random variable.

\subsection{Repro sample confidence sets}\label{s:repro}

{ \citet{xie2022repro} propose a general framework to produce valid confidence sets using ``repro samples,'' which we show is a special case of Lemma \ref{lem:confidence}. In the case where a sampling distribution is unavailable, \citet{xie2022repro} propose an approximate Monte Carlo confidence set, which serves as the motivation for the method we propose in Section \ref{s:conf:simulation}. In this section, we review the key results from \citet{xie2022repro} which motivate our methods. }

We assume that the observed data $s$, drawn from a distribution parameterized by the true parameter $\theta^*\in \Theta$, can be expressed as 
\begin{equation}\label{eq:generating}
s\overset d=G(\theta^*,u),
\end{equation}
for a known measurable function $G$ and a random variable $u\sim P$, with known distribution $P$ (not depending on $\theta^*$). We will refer to equation \eqref{eq:generating} as a \emph{generating equation}, which can be interpreted as the equation that was used to generate the data \citep{hannig2009generalized}. We will sometimes refer to $u$ as the ``seed'' that produced the sample $s$. Generating equations like \eqref{eq:generating} are used in many areas of statistics, including fiducial statistics \citep{hannig2009generalized}, the reparametrization trick in variational autoencoders \citep{kingma2013auto}, and co-sufficient sampling \citep{engen1997stochastic,lindqvist2005monte}.
\begin{remark}
    { For many problems, there is a natural choice for $G$, such as a linear transformation for location-scale families and inverse-transform sampling for real-valued random variables. However, there is no general prescription for the generating equation, and models may have multiple generating equations. While the theory presented in this paper is equally valid for any generating equation, intuition suggests that it is preferable to choose $G$ that is smooth in both $\theta$ and $u$. We also note that for complex models, it may be easier to specify multiple ``mini'' generating equations for each step of a hierarchical model, which is often convenient when working with privatized data. See Example \ref{ex:bern} for an example.}
\end{remark}

{ \citet{xie2022repro} proposed the repro sample methodology as a theoretical framework to derive a confidence interval, given a generating equation $G(\theta,u)$, \emph{nuclear mapping} $T(u,\theta)$ (similar to a test statistic), and a $P$-probability $(1-\alpha)$ set $A_\alpha(\theta)$ on the range of $T$ (i.e., for all $\theta\in \Theta$, $P_{u\sim P}(T(u,\theta)\in A_\alpha(\theta))\geq 1-\alpha$):}
\begin{equation}\label{eq:repro}
    { \Gamma_\alpha(s) = \big\{\theta ~|~ \exists u^* \text{ s.t. } s = G(\theta,u^*),~ T(u^*,\theta)\in A_\alpha(\theta)\big\}.}
\end{equation}
{ \citet{xie2022repro} show that with this construction, $\Gamma_\alpha(s)$ is a valid confidence set for $\theta$. By taking $B_\alpha$ to be the event $\{\exists u^* \text{ s.t. } s = G(\theta,u^*), T(u^*,\theta)\in A_\alpha(\theta)\}$, we see that \eqref{eq:repro} is a special case of \eqref{eq:CI}.

When the sampling distribution of $T(u^*,\theta)$ for $u^*\sim P$ is unavailable, \citet{xie2022repro} propose a Monte Carlo algorithm, which uses $R$ simulated copies $u_1,\ldots, u_R\iid P$, and an empirical $(1-\alpha)$ set $\hat A_\alpha$ calculated from $G(u_1,\theta),\ldots, G(u_R,\theta)$ over a grid search of $\theta$. While they show that this procedure works well in practice, there is no theoretical guarantee that the resulting confidence sets are calibrated to the nominal level. The lack of calibration arises from two main issues:
\begin{enumerate}
    \item The empirical $\hat A_\alpha$ does not account for Monte Carlo errors, and it would be required that $R\rightarrow \infty$ in order to make the Monte Carlo errors negligible.
    \item The grid search over $\theta$ may miss disconnected or irregularly shaped regions. Even assuming that all connected regions are somewhat well-behaved, it would require that the density of the grid increases in order to identify all regions of the confidence set.
\end{enumerate}

Besides the above issues, the grid search method is computationally expensive, especially if one is only interested in producing a confidence interval for a single parameter. In the following subsection, we address these limitations with an improvement to the repro methodology using simulation-based inference techniques. }


\subsection{Simulation-based confidence { sets}}\label{s:conf:simulation}
Similar to { the Monte Carlo method of } \citet{xie2022repro}, we will use simulation techniques to build  the event $B_\alpha$ in Section \ref{s:conf:abstract}.  Unlike \citet{xie2022repro}, our result offers guaranteed coverage, even accounting for the Monte Carlo sampling in the procedure. { Going forward, we will refer to our simulation-based method as ``repro sample'' or ``repro'' and will reference \citet{xie2022repro} when discussing the original repro sample methodology.}


Using the same setup as in Section \ref{s:repro}, our procedure draws $R$ i.i.d. copies $u_1,\ldots,u_R\iid P$, and for each $\theta\in \Theta$, we consider $s_i(\theta)=G(\theta,u_i)$ for $i=1,\ldots, R$, which we call \emph{repro samples}. At the true parameter $\theta^*$, these Monte Carlo samples $\{s_i(\theta^*)\}_{i=1}^R$ have the same distribution as $s$. Then, considering the sequence $s,s_1(\theta),\ldots, s_R(\theta)$, which consists of the repro samples and the observed sample, we construct the event $B_\alpha(\theta;s,(u_i)_{i=1}^R)$ such that for every $\theta \in \Theta$, $P_{s\sim F_\theta,u_i\sim P}(B_\alpha(\theta;s,(u_i)_{i=1}^R))\geq 1-\alpha$. We then plug in $B_\alpha$ into either \eqref{eq:CI} or \eqref{eq:project}  to form a valid $(1-\alpha)$-confidence set. We offer a general and constructive method to produce a valid $B_\alpha(\theta;s,(u_i)_{i=1}^R)$ in Theorem \ref{thm:construction}.

\begin{restatable}{thm}{thmconstruction}\label{thm:construction}
Let $\alpha \in (0,1)$. Let $s\in \mscr X$ be the observed sample, where $s=G(\theta^*,u)$, $u\sim P$, and $\theta^*$ is the true parameter. Let $u_1,\ldots, u_R\iid P$, and set $s_i(\theta)=G(\theta,u_i)$ for $i=1,\ldots, R$. 
Let $T_\theta:\mscr X\times \mscr X^{R+1}\rightarrow \RR$ be a permutation-invariant function, which serves as a test statistic. 
Call $T_{\text{obs}}(\theta) = T_\theta\big(s;s,s_1(\theta),\ldots, s_R(\theta)\big)$, and let $T_{(1)}(\theta),\ldots, T_{(R+1)}(\theta)$ be the order statistics of $T_\theta\big(s;s,s_1(\theta),\ldots, s_R(\theta)\big), T_\theta\big(s_1(\theta);s,s_1(\theta),\ldots, s_R(\theta)\big),$\ $\ldots,T_\theta\big(s_R(\theta);s,s_1(\theta),\ldots,s_R(\theta)\big)$. 
Then for any $b-a\geq \lceil (1-\alpha)(R+1)\rceil-1$, the event $B_\alpha(\theta;s,(u_i)_{i=1}^R)=\{T_{\text{obs}}(\theta)
 \in [ T_{(a)}(\theta),T_{(b)}(\theta)]\}$ satisfies \eqref{eq:predictionSet}. 
Thus, using $B_\alpha$  in either \eqref{eq:CI} or \eqref{eq:project} of Lemma \ref{lem:confidence}, where $\omega = (u_1,\ldots, u_R)$, gives a $(1-\alpha)$-confidence set. 
\end{restatable}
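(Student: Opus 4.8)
The plan is to show that the event $B_\alpha(\theta;s,(u_i)_{i=1}^R)=\{T_{\text{obs}}(\theta)\in[T_{(a)}(\theta),T_{(b)}(\theta)]\}$ satisfies the coverage inequality \eqref{eq:predictionSet}, after which the conclusion follows immediately by invoking Lemma \ref{lem:confidence} with $\omega=(u_1,\ldots,u_R)$. The core of the argument is an exchangeability-plus-rank argument. Fix $\theta\in\Theta$ and consider the data-generating situation where $s\sim F_\theta$; since $s\overset d=G(\theta,u)$ with $u\sim P$ and $u_1,\ldots,u_R\iid P$ independent of $u$, the tuple $(s,s_1(\theta),\ldots,s_R(\theta))=(G(\theta,u),G(\theta,u_1),\ldots,G(\theta,u_R))$ is an i.i.d. — hence exchangeable — sequence of $R+1$ random variables. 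I would then observe that $T_\theta(\cdot\,;\,s,s_1(\theta),\ldots,s_R(\theta))$ is applied with a \emph{permutation-invariant} second argument, so by the remark in Section \ref{s:background} the sequence of evaluated statistics
\[
V_0\defeq T_\theta(s;s,s_1(\theta),\ldots,s_R(\theta)),\quad V_j\defeq T_\theta(s_j(\theta);s,s_1(\theta),\ldots,s_R(\theta)),\ j=1,\ldots,R,
\]
is itself exchangeable. Note $V_0=T_{\text{obs}}(\theta)$ and $T_{(1)}(\theta),\ldots,T_{(R+1)}(\theta)$ are the order statistics of $V_0,\ldots,V_R$.

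The next step is the rank computation. By exchangeability, the rank of $V_0$ among $\{V_0,\ldots,V_R\}$ is (marginally) uniform on $\{1,\ldots,R+1\}$ — this is the standard argument, with the only subtlety being ties, which I would handle by defining the rank via a fixed tie-breaking convention (or by noting that ties can only help, i.e. enlarge the event). Then
\[
P_{s\sim F_\theta,u_i\sim P}\bigl(T_{\text{obs}}(\theta)\in[T_{(a)}(\theta),T_{(b)}(\theta)]\bigr)\;=\;P(\,\mathrm{rank}(V_0)\in\{a,a+1,\ldots,b\}\,)\;\geq\;\frac{b-a+1}{R+1}.
\]
Imposing $b-a\geq\lceil(1-\alpha)(R+1)\rceil-1$ gives $b-a+1\geq\lceil(1-\alpha)(R+1)\rceil\geq(1-\alpha)(R+1)$, so the probability is at least $1-\alpha$, which is exactly \eqref{eq:predictionSet}. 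Finally I would feed this $B_\alpha$ into Lemma \ref{lem:confidence}: since the lemma's hypothesis \eqref{eq:predictionSet} is verified (with $Q$ the law of $(u_1,\ldots,u_R)$ and the independence of $\omega$ from $s$ holding because the $u_i$ are drawn independently of the data-generating seed $u$), the sets $\Gamma_\alpha(s,\omega)$ and $\Gamma_\alpha^\beta(s,\omega)$ from \eqref{eq:CI}, \eqref{eq:project} are valid $(1-\alpha)$-confidence sets.

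The main obstacle I anticipate is not the probability bound itself but making the exchangeability step fully rigorous: one must be careful that the test statistic $T_\theta$ is fed the \emph{same} unordered collection $\{s,s_1(\theta),\ldots,s_R(\theta)\}$ as its conditioning argument in every one of the $R+1$ evaluations, so that permuting the roles of $s$ and the $s_j(\theta)$ permutes the vector $(V_0,\ldots,V_R)$ and nothing else — this is precisely where permutation-invariance of $T$ is essential, and it is worth spelling out that without it the $V_j$ need not be exchangeable. The secondary technical point is the handling of ties and the possibility of atoms in the distribution of $T_\theta$; the cleanest fix is to note that whatever the tie structure, the event of interest contains the event $\{\mathrm{rank}(V_0)\in\{a,\ldots,b\}\}$ under any consistent tie-breaking, whose probability is $(b-a+1)/(R+1)$ by symmetry, so the stated inequality holds regardless. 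Everything else is bookkeeping: translating between ``rank of $V_0$'' and ``$V_0$ lies between the $a$th and $b$th order statistics,'' and checking the ceiling arithmetic.
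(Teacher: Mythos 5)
Your proof is correct and follows essentially the same route as the paper: exchangeability of the $R+1$ evaluated statistics (via permutation-invariance of $T_\theta$ and the i.i.d.\ structure of $(s,s_1(\theta),\ldots,s_R(\theta))$ under $s\sim F_\theta$), the uniform-rank/order-statistic bound $(b-a+1)/(R+1)\geq 1-\alpha$ with ties only enlarging the event (this is exactly the content of the paper's Lemma A.1), and then an appeal to Lemma \ref{lem:confidence} with $\omega=(u_1,\ldots,u_R)$ and $Q=P^R$. No gaps.
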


Theorem \ref{thm:construction} gives a straightforward construction for sets satisfying \eqref{eq:predictionSet}, which, as Lemma \ref{lem:confidence} points out, lead to valid confidence sets. A key difference between these results and those of \citet{xie2022repro} is that our $B_\alpha$ sets exactly satisfy \eqref{eq:predictionSet}, even including the Monte Carlo errors. The key insights for Theorem \ref{thm:construction} are 1) the need for a permutation-invariant statistic, 2) the importance of including $s$ on the right hand side of $T$ to ensure exchangeability, and 3) the construction of prediction sets based on order statistics, using reasoning similar to that in conformal prediction \citep{vovk2005algorithmic}. See Lemma A.1 in the supplement for details. 

\begin{example}\label{ex:ab}
We list some examples of permutation-invariant functions, and show how $a$ and $b$ can be chosen for one-sided or two-sided criteria:
\begin{enumerate}
\item If $s\in \RR$, then setting $T_\theta(s;(s_i(\theta))_{i=1}^R)=s$  is trivially permutation-invariant. In this case, both large and small values of $s$ indicate that it is unusual, so we may set $a$ and $(R+1)-b$ to be approximately equal:  $a=\lfloor(\alpha/2)(R+1)\rfloor$ and $b=a+\lceil (1-\alpha)(R+1)\rceil -1$. 
\item More generally, if $T_\theta:\mscr X\rightarrow \RR$ is a test statistic, which may also depend on the parameter $\theta$, then this also satisfies the assumptions of Theorem \ref{thm:construction}. If large values indicate that it is unusual, we can set $a=1$ and $b=\lceil (1-\alpha)(R+1)\rceil$. On the other hand, if small values of $T$ indicate that it is unusual, then we set $a=\lfloor\alpha (R+1)\rfloor+1$ and $b=R+1$.
\item In general, most statistical depth functions are permutation-invariant, such as Mahalanobis depth, simplicial depth, and Tukey/Halfspace depth. Typically, depth statistics are designed such that lower depth corresponds to unusual points. For example, Mahalanobis depth is defined as $T(x; X) = [1+(x-\mu_X)' \Sigma_X^{-1} (x-\mu_X)]^{-1}$ where $\mu_X$ and $\Sigma_X$ are the sample mean and covariance of $X$. To use this depth as the $T$ in Theorem \ref{thm:construction}, we let $X=(s,s_1(\theta),\ldots,s_R(\theta))$ and $x$ be one of $s, s_1(\theta), \ldots,s_R(\theta)$, and set  $a=\lfloor\alpha (R+1)\rfloor+1$ and $b=R+1$. 
\end{enumerate}
\end{example}

\begin{remark}
We remark that depth statistics were also recommended by \citet{xie2022repro} for use in their Monte Carlo repro sampling framework, but their approach did not calibrate the confidence sets as we do in Theorem \ref{thm:construction}. 
\end{remark}

We end this section with two relatively simple examples, showing how one can apply our simulation-based framework to private inference problems. See Section \ref{s:simulations} for additional examples.

{ \begin{example}\label{ex:bern}

Suppose that $\ul x = (x_1,\ldots, x_n)$, $x_i \iid \mathrm{Bern}(\theta^*)$, and we observe the privatized statistic $s =  \sum_{i=1}^n x_i+N$, for some noise distribution $N$. Following \citet{awan2020differentially}, we use the $\mathrm{Tulap}(0,\exp(-1),0)$ distribution to achieve $1$-DP. According to \citet{awan2020differentially}, such a Tulap random variable can be sampled as $N\overset d= G_1-G_2+U$, where $G_1,G_2\iid \mathrm{Geom}(1-\exp(-1))$ and $U\sim \mathrm{Unif}(-1/2,1/2)$ are independent.

 In this setting, we can set $P_x\defeq \mathrm{Unif}(0,1)$ and sample $u^x_i\sim P_x$ for $i=1,\ldots, n$. Then using $G_x(\theta,u) = I(u\leq\theta)$, we have $G_x(\theta,u^x_i)\iid \mathrm{Bern}(\theta)$. For the privacy mechanism, we set $P_{\mathrm{DP}}=\mathrm{Tulap}(0,\exp(-1),0)$ and sample $u_{\mathrm{DP}}\sim P_{\mathrm{DP}}$. Using the function $G_s(\ul x,u_{\mathrm{DP}})=\sum_{i=1}^n x_i+ u_{\mathrm{DP}}$, we have that $s|\ul x \overset d =G_s(\ul x,u_{\mathrm{DP}})$. Then, the full generating equation is,
 $G(\theta,u) =  \sum_{i=1}^nI(u^x_i\leq \theta)+u_{\mathrm{DP}}$, where $u = ((u^x_i)_{i=1}^n,u_{\mathrm{DP}})$.

In this case, the exact sampling distribution of $s$ can be derived, which \citet{awan2020differentially} use to construct an asymptotically unbiased DP confidence interval of $\theta^*$ based on $s$, which has exact type I error and is near-optimal. Table \ref{tab:binomial_awan} is the result of a small simulation study showing that our simulation-based methodology is able to obtain nearly identical performance as \citet{awan2020differentially}, and does not require an analytical sampling distribution. We set $R=200$ and use Mahalanobis depth as $T$. 

\begin{table}[t]
    \centering
    \begin{tabular}{l|ll}
    & Empirical Coverage & Average Width \\\hline
Repro Sample & { 0.949 (0.007) } & { 0.1657 (0.0005)}\\\hline
\citep{awan2020differentially} & { 0.947 (0.007)} & { 0.1632 (0.0004)}
    \end{tabular}
    \caption{Nominal 95\% confidence intervals for the $\theta^*$ of Bernoulli data. True parameters are $\theta^*=0.2$. Sample $x_1,\ldots, x_{100} \iid \mathrm{Bern}(\theta^*)$. The results are under $1$-DP and computed over 1000 replicates.}
    \label{tab:binomial_awan}
\end{table}
\end{example}}

\begin{figure}[t]
    \centering
    \includegraphics[width=.24\linewidth]{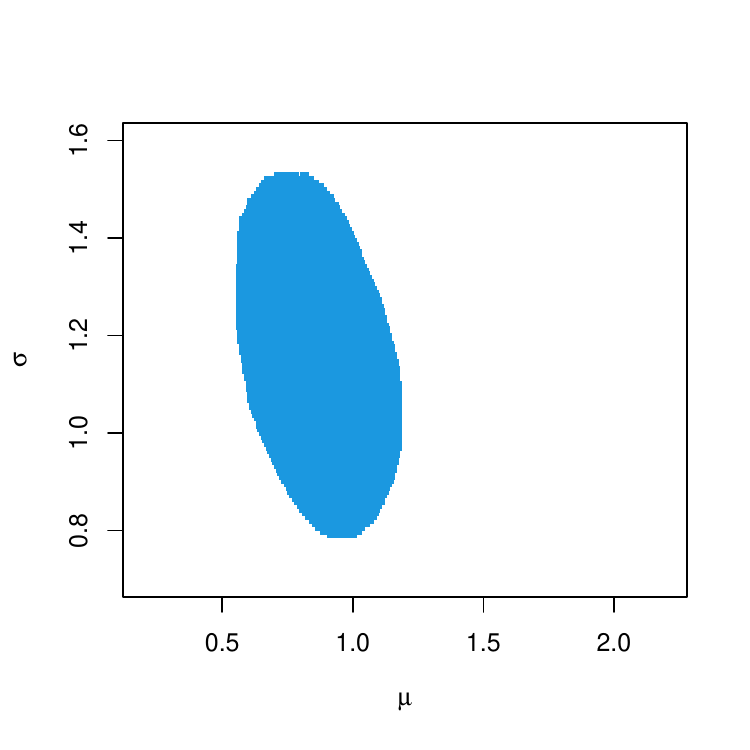}
    \includegraphics[width=.24\linewidth]{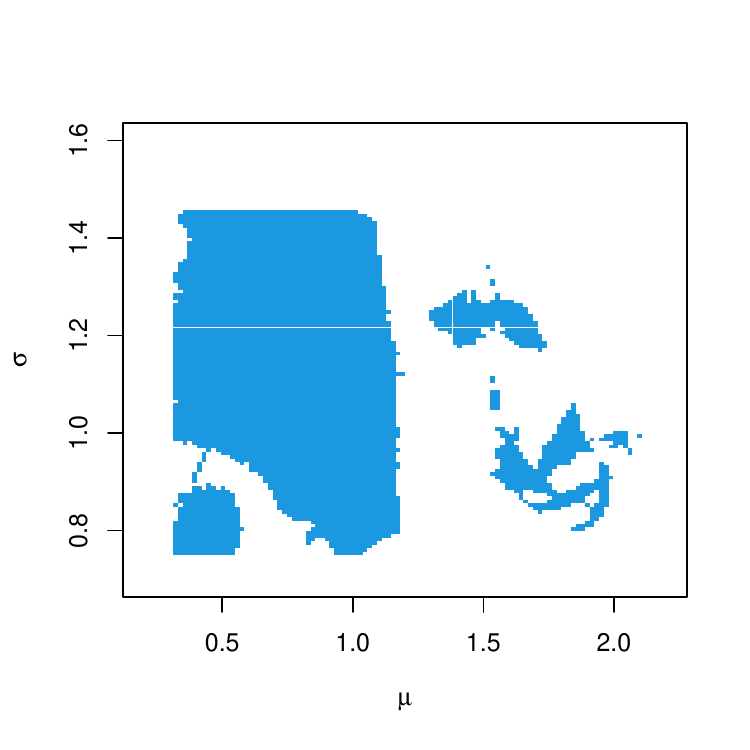}
    \includegraphics[width=.24\linewidth]{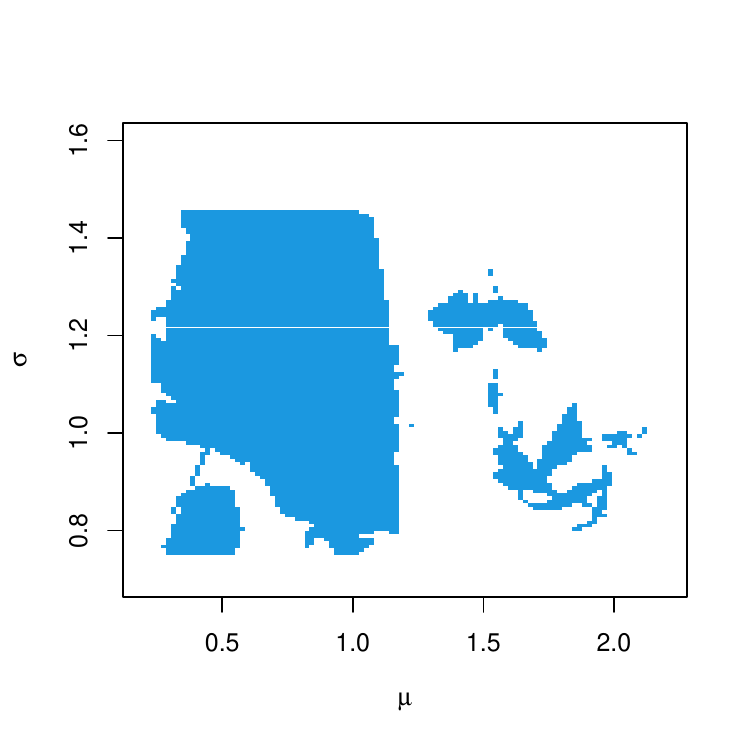}
    \includegraphics[width=.24\linewidth]{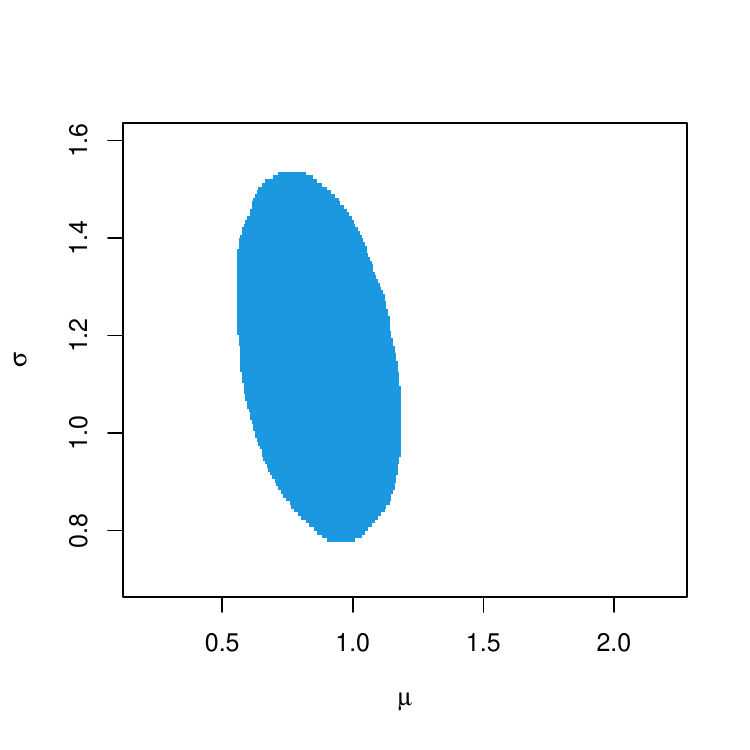}

    \caption{$95\%$ confidence set for location-scale normal (Example \ref{ex:normal1} and Section \ref{s:normal}), based on $s = (1, 0.75)$, generated using $n=100$, $\theta^*=(\mu^*,\sigma^*)=(1,1)$, $\epsilon=1$, $U=3$, $L=0$, and $R=200$ repro samples. From left to right: Mahalanobis depth (area 0.35), Halfspace/Tukey depth (area 0.61), Simplicial depth (area 0.63), Spatial depth (area 0.36).}
    \label{fig:NormSet}
\end{figure}

\begin{example}
[Comparison of depth statistics] \label{ex:normal1}
{ For privatized data, it can be challenging to develop a customized test statistic with good properties (such as being a pivot). Because of this, we explore various depth statistics which can be used as a default test statistic. In this example, we compare the confidence sets of different depth statistics in a location-scale normal problem, where we are given DP summary statistics which are the noisy mean and noisy variance of clamped data. See Section \ref{s:normal} for details on the model and privacy mechanism.}


For this problem, we have a two-dimensional privatized summary, one for the mean and one for the variance, and we compare different options of statistical depth for use in Theorem \ref{thm:construction}, see Figure \ref{fig:NormSet} { (using Algorithm \ref{alg:ConfGrid} in Section \ref{s:conf:numerical})}. { We see that the halfspace and simplicial depth statistics result in disconnected confidence regions, whereas Mahalanobis and spatial depth give better behaved confidence regions. Between Mahalanobis and spatial depth, we prefer Mahalanobis depth as it is more computationally efficient, has a simple formula, and is connected to other indirect inference techniques \citep{gourieroux1993indirect}. Note that while Mahalanobis depth results in a connected confidence region in this example, this need not generally be the case; in Section \ref{s:conf:numerical} we propose an algorithm which can account for disconnected regions in the confidence set. }
\end{example}

\subsection{Numerical algorithm for confidence intervals}\label{s:conf:numerical}

\begin{algorithm}[t]

\caption{{\texttt{accept}($\Theta_0\subset \Theta$, $\alpha$, $(u_i)_{i=1}^R$, $G$, $s$, $T$)}}
\scriptsize
INPUT: subset $\Theta_0\subset \Theta$ to search over, $\alpha\in [1/(R+1),1)$, seeds $(u_i)_{i=1}^R$, observed statistic $s\in \RR^d$, exchangeable statistic $T_\theta:\RR^d\times \RR^{d(R+1)}\rightarrow [0,1]$, defined for all $\theta\in \Theta$ (low values are interpreted as unusual), and $G(\theta,u)$ is the generating equation for $s$.

\begin{algorithmic}[1]
  \setlength\itemsep{0em}
  \STATE For a given $\theta$, denote $s_i(\theta) = G(\theta,u_i)$.
  \STATE Denote $T_i(\theta) = T_\theta(s_i(\theta);s,s_1(\theta),\ldots, s_R(\theta))$
  \STATE Denote $T_{\text{obs}}(\theta) = T_\theta(s;s,s_1(\theta),\ldots, s_R(\theta))$
  \STATE { Call $M = \sup_{\theta\in \Theta_0} [\#\{T_i(\theta)\leq T_{\text{obs}}(\theta)\}+1+T_{\text{obs}}(\theta)]$ }
  \IF{$M\geq \lfloor \alpha(R+1)\rfloor+1$}
  \STATE Return $\texttt{TRUE}$
  \ELSE
  \STATE Return $\texttt{FALSE}$
  \ENDIF
\end{algorithmic}

\label{alg:accept}
\end{algorithm}

\begin{algorithm}[t]
\caption{{ \texttt{confidenceInterval}($\alpha$, $\Theta = B\times H$, $(u_i)_{i=1}^R$, $G$, $s$, $T$, $\mathrm{tol}$)}}
\scriptsize
INPUT: $\alpha\in [1/(R+1),1)$, decomposition $\Theta=B\times H$ of $\theta = (\beta,\eta)$ such that $B\subset \RR$, seeds $(u_i)_{i=1}^R$, observed statistic $s\in \RR^d$, exchangeable statistic $T_\theta:\RR^d\times \RR^{d(R+1)}\rightarrow [0,1]$, defined for all $\theta\in \Theta$ (low values are interpreted as unusual), $G(\theta,u)$ is the generating equation for $s$, and $\mathrm{tol}>0$ is the numerical tolerance.

\begin{algorithmic}[1]
  \setlength\itemsep{0em}
  \IF{There exists $\hat\beta_{\text{init}}\in B$ such that $\texttt{accept}(\hat\beta_{\text{init}},\alpha,\Theta=B\times H,(u_i)_{i=1}^R,G,s,T)=\texttt{TRUE}$}

\IF{There exists $\ul{\beta_L}\in B$ such that $\texttt{accept}([\inf B,\ul{\beta_L}]\times H,\alpha,(u_i)_{i=1}^R,G,s,T)=\texttt{FALSE}$}
\STATE Perform a bisection method search as follows:
\STATE Set $\ol{\beta_L} = \hat \beta_{\text{init}}$
\WHILE{$\ol{\beta_L} - \ul{\beta_L}>\mathrm{tol}$}
\STATE Set $\beta_L^* = (\ul{\beta_L}+\ol{\beta_L})/2$
\IF{$\texttt{accept}([\ul{\beta_L},\beta_L^*]\times H,\alpha,(u_i)_{i=1}^R,G,s,T)=\texttt{TRUE}$}
\STATE Set $\ol{\beta_L} = \beta_L^*$
\ELSE
\STATE Set $\ul {\beta_L} = \beta_L^*$
\ENDIF
\ENDWHILE

\ELSE
\STATE $\ul {\beta_L}=\inf B$
\ENDIF
\IF{There exists $\ol{\beta_U}\in B$ such that $\texttt{accept}([\ol {\beta_U},\sup B],\alpha,(u_i)_{i=1}^R,G,s,T)=\texttt{FALSE}$}
\STATE Perform a bisection method search as follows:
\STATE Set $\ul{\beta_U} = \hat\beta_{\text{init}}$
\WHILE{$\ol{\beta_U} - \ul{\beta_U}>\mathrm{tol}$}
\STATE Set $\beta^*_U = (\ul {\beta_U }+ \ol {\beta_U})/2$
\IF{$\texttt{accept}([\beta_U^*,\ol{\beta_U}]\times H,\alpha,(u_i)_{i=1}^R,G,s,T)=\texttt{TRUE}$}
\STATE Set $\ul{\beta_U} = \beta_U^*$
\ELSE
\STATE Set $\ol{\beta_U} = \beta_U^*$
\ENDIF
\ENDWHILE
    \ELSE
\STATE $\ul{\beta_U} = \sup B$
\ENDIF
    \ELSE
    \STATE OUTPUT: $\emptyset$
    \ENDIF
\end{algorithmic}
OUTPUT: $[\ul {\beta_L},\ol{\beta_U}]$ 
\label{alg:CI}
\end{algorithm}

While Theorem \ref{thm:construction} gives a construction for confidence sets, which even accounts for Monte Carlo errors, finding an explicit description for the confidence set { is another challenge. 
In this section, we propose numerical algorithms that enable valid confidence sets which can be easily implemented. 
We first develop Algorithm \ref{alg:accept} which is determines whether a subset of the parameter space has intersection with the confidence set of Theorem \ref{thm:construction}, and use this to develop Algorithm \ref{alg:CI} which gives a valid confidence interval for a parameter of interest. We also propose Algorithm \ref{alg:ConfGrid} which is a valid grid-based multi-dimensional confidence set.}

\begin{restatable}{prop}{propalg}\label{prop:alg}
Let $\Theta=B\times H$ be a decomposition of a parameter space such that $B$ is a connected subset of $\RR$. Let $T$ be a permutation-invariant statistic taking values in $[0,1]$,  where small values give evidence that a sample is ``unusual'' relative to the others. { Let $\Gamma_\alpha^\beta(s,\omega)$  be the confidence set for $\beta\in B$ from \eqref{eq:project} based on $T$ as described in Theorem \ref{thm:construction}, using $a=\lfloor \alpha(R+1)\rfloor+1$ and $b=R+1$. Then, the output of Algorithm \ref{alg:CI} is a $(1-\alpha)$ confidence interval containing $\Gamma_\alpha^\beta(s,\omega)$, whose width is at most $2\mathrm{tol}$ units larger than the smallest interval containing $\Gamma_\alpha^\beta(s,\omega)$.}
\end{restatable}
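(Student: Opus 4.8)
The plan is to show three things: (i) Algorithm \ref{alg:accept}, when called on a subset $\Theta_0 = B_0 \times H$, returns \texttt{TRUE} if and only if $\Theta_0$ intersects the confidence set $\Gamma_\alpha(s,\omega)$ of Theorem \ref{thm:construction} (equivalently $B_0 \cap \Gamma_\alpha^\beta(s,\omega) \neq \emptyset$); (ii) consequently the set of $\beta \in B$ lying in $\Gamma_\alpha^\beta(s,\omega)$ is exactly $\{\beta : \texttt{accept}(\{\beta\} \times H, \ldots) = \texttt{TRUE}\}$ and the two bisection searches converge to the infimum and supremum of this set up to tolerance $\mathrm{tol}$; (iii) the returned interval is valid (contains $\Gamma_\alpha^\beta(s,\omega)$, hence has coverage $\geq 1-\alpha$ by Lemma \ref{lem:confidence} and Theorem \ref{thm:construction}) and is at most $2\mathrm{tol}$ wider than the smallest interval containing $\Gamma_\alpha^\beta(s,\omega)$.

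First I would unwind the definitions. With $a = \lfloor \alpha(R+1)\rfloor + 1$ and $b = R+1$, the event $B_\alpha(\theta; s,\omega)$ from Theorem \ref{thm:construction} is $\{T_{\text{obs}}(\theta) \geq T_{(a)}(\theta)\}$, i.e.\ $T_{\text{obs}}(\theta)$ is not among the $a-1 = \lfloor\alpha(R+1)\rfloor$ smallest of the $R+1$ values $T_{\text{obs}}(\theta), T_1(\theta), \ldots, T_R(\theta)$. The quantity $\#\{i : T_i(\theta) \leq T_{\text{obs}}(\theta)\} + 1 + T_{\text{obs}}(\theta)$ in line 4 of Algorithm \ref{alg:accept} is a tie-broken rank of $T_{\text{obs}}(\theta)$ among these values: the additive $T_{\text{obs}}(\theta) \in [0,1]$ term breaks ties in favor of $T_{\text{obs}}$ counting as ``larger,'' which is the conservative choice consistent with the closed interval $[T_{(a)}(\theta), T_{(b)}(\theta)]$ in Theorem \ref{thm:construction}. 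I would verify the elementary equivalence: $T_{\text{obs}}(\theta) \geq T_{(a)}(\theta)$ $\iff$ $\#\{i : T_i(\theta) \leq T_{\text{obs}}(\theta)\} + 1 + T_{\text{obs}}(\theta) \geq a = \lfloor\alpha(R+1)\rfloor + 1$, so that $\theta \in \Gamma_\alpha(s,\omega)$ iff this rank is $\geq \lfloor\alpha(R+1)\rfloor+1$. Then $M = \sup_{\theta \in \Theta_0}[\cdots]$ is $\geq \lfloor\alpha(R+1)\rfloor+1$ precisely when some $\theta \in \Theta_0$ satisfies the membership condition (the supremum being attained is not needed — if the sup exceeds the threshold then some $\theta$ already does), giving claim (i). Projecting onto $\beta$, $\texttt{accept}(B_0 \times H, \ldots) = \texttt{TRUE} \iff B_0 \cap \Gamma_\alpha^\beta(s,\omega) \neq \emptyset$.

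Next I would analyze the bisection logic of Algorithm \ref{alg:CI}. The outer \texttt{if} checks whether $\Gamma_\alpha^\beta(s,\omega)$ is nonempty (testing the whole of $B$ via some $\hat\beta_{\text{init}}$); if empty, output $\emptyset$, which trivially contains $\Gamma_\alpha^\beta$. Otherwise, the lower search maintains the invariant that $[\inf B, \ul{\beta_L}]$ contains no point of $\Gamma_\alpha^\beta(s,\omega)$ while $\ol{\beta_L} \geq \inf(\Gamma_\alpha^\beta)$ — more precisely, using claim (i): whenever $\texttt{accept}([\ul{\beta_L}, \beta_L^*] \times H, \ldots) = \texttt{TRUE}$ we learn $\Gamma_\alpha^\beta$ meets $[\ul{\beta_L},\beta_L^*]$ and move $\ol{\beta_L}$ down; when \texttt{FALSE}, $\Gamma_\alpha^\beta$ avoids $[\ul{\beta_L},\beta_L^*]$ and we move $\ul{\beta_L}$ up. In either case $[\inf B, \ul{\beta_L}] \cap \Gamma_\alpha^\beta = \emptyset$ is preserved, and the interval $[\ul{\beta_L},\ol{\beta_L}]$ halves each step, so on termination $\ol{\beta_L} - \ul{\beta_L} \leq \mathrm{tol}$ and $\ul{\beta_L} \leq \inf(\Gamma_\alpha^\beta) \leq \ol{\beta_L}$, hence $\ul{\beta_L} \geq \inf(\Gamma_\alpha^\beta) - \mathrm{tol}$. (If no $\ul{\beta_L}$ with a \texttt{FALSE} test exists, we set $\ul{\beta_L} = \inf B$, which is $\leq \inf(\Gamma_\alpha^\beta)$ a fortiori.) The symmetric argument gives $\ol{\beta_U} \leq \sup(\Gamma_\alpha^\beta) + \mathrm{tol}$ and $\ol{\beta_U} \geq \sup(\Gamma_\alpha^\beta)$. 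Therefore $[\ul{\beta_L}, \ol{\beta_U}] \supseteq [\inf(\Gamma_\alpha^\beta), \sup(\Gamma_\alpha^\beta)] \supseteq \Gamma_\alpha^\beta(s,\omega)$, so by Theorem \ref{thm:construction} and Lemma \ref{lem:confidence} it has coverage $\geq 1-\alpha$; and its width exceeds $\sup(\Gamma_\alpha^\beta) - \inf(\Gamma_\alpha^\beta)$ (the width of the smallest interval containing $\Gamma_\alpha^\beta$) by at most $2\mathrm{tol}$.

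I expect the main obstacle to be the careful bookkeeping in two places. First, the exact handling of ties and the role of the fractional term $T_{\text{obs}}(\theta) \in [0,1]$ in line 4 of Algorithm \ref{alg:accept}: I must confirm this tie-breaking makes the \texttt{accept} test match the \emph{closed}-interval membership event of Theorem \ref{thm:construction} exactly, since a mismatch (using a strict versus weak inequality) would either break validity or the width bound. Second, verifying that the loop invariants of the bisection genuinely hold across every branch — in particular that \texttt{accept} is called on the \emph{nested} sub-intervals $[\ul{\beta_L},\beta_L^*]\times H$ (not $[\inf B, \beta_L^*]\times H$), so that a \texttt{FALSE} answer only certifies $\Gamma_\alpha^\beta$ avoids that sub-interval, which combined with the standing invariant $[\inf B,\ul{\beta_L}]\cap\Gamma_\alpha^\beta=\emptyset$ extends the excluded region — and that the initial \texttt{FALSE}-witness tests in lines 2 and 17 correctly establish the base cases. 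Monotonicity of $\Gamma_\alpha^\beta$ is \emph{not} assumed, so the argument must rely only on the ``$\Gamma_\alpha^\beta$ meets / avoids a sub-interval'' dichotomy, never on connectedness of $\Gamma_\alpha^\beta$ itself; this is the subtle point that makes the algorithm correct even for disconnected confidence sets, and I would flag it explicitly.
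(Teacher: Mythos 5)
Your proposal is correct and follows essentially the same route as the paper: first an ``accept returns \texttt{TRUE} iff $\Theta_0$ meets the confidence set'' lemma proved via the rank equivalence $T_{\text{obs}}(\theta)\geq T_{(a)}(\theta) \iff \#\{T_i(\theta)\leq T_{\text{obs}}(\theta)\}+1+T_{\text{obs}}(\theta)\geq a$ (using $T_{\text{obs}}\in[0,1]$ and $a\in\ZZ$), and then the bisection invariants $[\inf B,\ul{\beta_L}]\cap\Gamma^\beta_\alpha=\emptyset$ and $\inf\Gamma^\beta_\alpha\in[\ul{\beta_L},\ol{\beta_L}]$ (symmetrically on the right), yielding containment, inherited coverage, and the $2\,\mathrm{tol}$ width bound. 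The subtleties you flag — the fractional tie-breaking term and the fact that the argument never uses connectedness of $\Gamma^\beta_\alpha$ — are exactly the points the paper's proof relies on.
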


\begin{example}[Example confidence sets]\label{ex:illustration}
    { In Figure \ref{fig:illustration}, we give illustrations on how Algorithms \ref{alg:CI} and \ref{alg:ConfGrid} operate. In part a) we run Algorithm \ref{alg:CI}, where at each point in time, regions are colored yellow to indicate that we are currently unsure whether a region belongs in the confidence interval or not, red if the region is confirmed to be outside of the confidence interval, and blue if the region is confirmed to belong to our final confidence interval. In the first line, we plot $\beta_{\mathrm{init}}$, which we know lies in the confidence set, and in the second line, we identify $\ul {\beta_L}$ and $\ol{\beta_U}$, which form initial lower and upper bounds for the confidence set. In line 3 we find that the interval $[\ul {\beta_L},\beta^*_L]$ does not intersect the confidence set and is marked red, and the interval $[\beta^*_L,\beta_{\mathrm{init}}]$ is marked yellow; we set $\ul {\beta_L}=\beta^*_L$ for the next iteration . We also confirm that the interval $[\beta^*_U,\ol{\beta_U}]$ does  intersect the confidence set and so is marked yellow, while this information implies that $[\beta_{\mathrm{init}},\beta^*_U]$ should be marked blue; we set $\ul {\beta_U}=\beta^*_U$ for the next iteration. Line 4 implements another iteration of the algorithm, and after this we output the resulting $ [\ul{\beta_L},\ol{\beta_U}]$.

    Part b) of Figure \ref{fig:illustration} shows the grid search confidence set of Algorithm \ref{alg:ConfGrid}, where the green region is the original confidence set $\Gamma_\alpha$ by Theorem \ref{thm:construction}, the blue regions are the output of Algorithm \ref{alg:ConfGrid}, which are confirmed to have intersection with $\Gamma_\alpha$, and the red regions are confirmed to have no intersection with $\Gamma_\alpha$. }
\end{example}

\begin{figure}[t]
    \centering
    \includegraphics[width=0.6\linewidth]{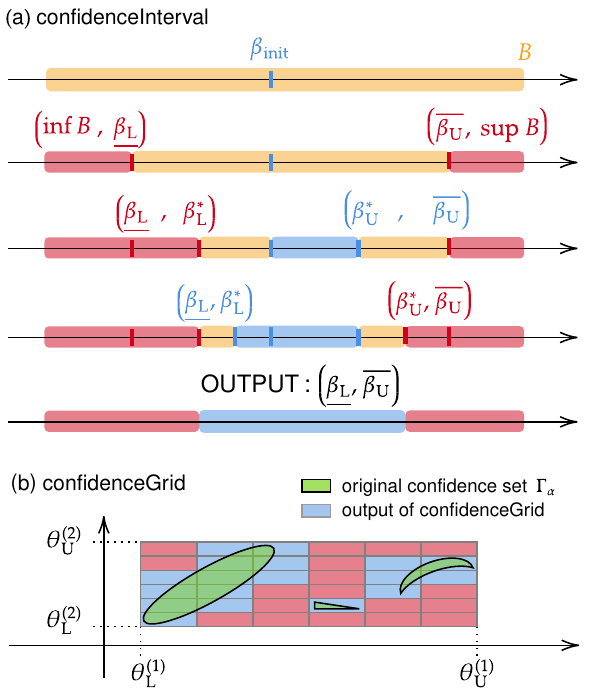}

    \caption{ (a) Illustration of Algorithm \ref{alg:CI}. (b) Illustration of Algorithm \ref{alg:ConfGrid}. See Example \ref{ex:illustration} for details. }
    \label{fig:illustration}
\end{figure}

\begin{remark}
    Step 1 of Algorithm \ref{alg:CI} requires us to find  $\hat\beta_{\text{init}}\in \Gamma_\alpha^\beta(s,u)$. Assuming that $\hat\beta_{\text{init}}=\argmax_{\beta} \{\sup_{\eta}\{\#\{T_{\text{obs}}((\beta,\eta))\geq T_{(i)}((\beta,\eta))\}\}\}$ is well defined and $\Gamma_\alpha^\beta(s,u)$ is non-empty, we see that $\hat\beta_{\text{init}}\in \Gamma_\alpha^\beta(s,u)$. Note that since we only need any point in $\Gamma_\alpha^\beta(s,u)$, we can also use { any other estimator, provided it is confirmed to belong to $\Gamma^\beta_\alpha(s,u)$}.  

    { To find the starting $\ul {\beta_L}$ (and similarly for $\ol {\beta_U}$) in Algorithm \ref{alg:CI}, one can use $\ul{\beta_L} = \inf B$ in the case that $\inf B>-\infty$. Otherwise, we can use a ``logarithmic bisection method'' starting at $\ul {\beta_L}=\min\{-1,\sup B\}$ and doubling $\ul {\beta_L}$ until we obtain a value that meets our requirements (if $\ul {\beta_L}$ diverges to $-\infty$, we can set $\ul {\beta_L}=-\infty$ in line 14). }
\end{remark}

\begin{remark}
In Algorithm \ref{alg:accept}, it is equivalent to set $M= \sup_{\theta\in \theta_0}\#\{T_i(\theta)\leq T_{\text{obs}}(\theta)\}+1$, as noted in the proof of Proposition \ref{prop:alg}. However, it is numerically challenging to maximize an integer-valued objective function. By altering the objective with the value $T_{\text{obs}}(\theta)$, we introduce a continuous component where typically, by increasing $T_{\text{obs}}(\theta)$, we will also increase the rank. This altered objective is more easily optimized by standard optimization software, such as R's L-BFGS-B method in \texttt{optim}. 
\end{remark}

\begin{algorithm}[t]
\caption{{ \texttt{confidenceGrid}($\alpha$, $\Theta$, $(u_i)_{i=1}^R$, $G$, $s$, $T$, $r$)}}
\scriptsize
INPUT: $\alpha\in [1/(R+1),1)$, $\Theta\ni \theta$, seeds $(u_i)_{i=1}^R$, observed statistic $s\in \RR^d$, exchangeable statistic $T_\theta:\RR^d\times \RR^{d(R+1)}\rightarrow [0,1]$, defined for all $\theta\in \Theta$ (low values are interpreted as unusual), $G(\theta,u)$ is the generating equation for $s$, and $r$ is the resolution of the grid.

\begin{algorithmic}[1]
  \setlength\itemsep{0em}
  \STATE Use \texttt{confidenceInterval} with seeds $(u_i)_{i=1}^R$ to obtain a confidence interval for each coordinate: $[\theta_L^{(i)},\theta_U^{(i)}]$ for $i=1,2,\ldots, d=\dim(\Theta)$. We assume that all endpoints are finite. Call $\Gamma_0 = \prod_{i=1}^d [\theta_L^{(i)},\theta_U^{(i)}]$.
  \STATE Set $b_r^{(i)} = \frac{\theta_U^{(i)} - \theta_L^{(i)}}{r}$, the grid width
  \STATE Set $\theta_j^{(i)} = \theta_L^{(i)} + j b_r^{(i)}$ for $j=0,1,2,\ldots, r$, the grid endpoints
  \STATE Call $\Gamma_0^r = \left\{\prod_{i=1}^d [\theta_{j_i}^{(i)},\theta_{j_i+1}^{(i)}] ~\middle |~ j_i \in \{0,1,2,\ldots, r-1\},~ i=1,\ldots, d\right\}$, the discretization of $\Gamma_0$ into a grid with resolution $r$
\end{algorithmic}
OUTPUT: $\bigcup\left\{\Theta_0\in \Gamma_0^r ~\middle |~ \texttt{accept}(\Theta_0,\alpha,(u_i)_{i=1}^R,G,s,T)=\texttt{TRUE}\right\}$
\label{alg:ConfGrid}
\end{algorithm}

\section{Hypothesis testing \texorpdfstring{$p$}{p}-values}\label{s:pvalue} 
While \citet{xie2022repro} focused on confidence intervals, they also highlight some basic connections to hypothesis testing, which we review here with our notation. { However, their $p$-value formula is purely theoretical and \citet{xie2022repro} do not even propose a Monte Carlo algorithm}. In contrast, we derive a { simulation-based}
$p$-value formula that leverages the results of Theorem \ref{thm:construction} and Lemma \ref{lem:confidence} to ensure conservative type I errors, even accounting for Monte Carlo errors.

We will consider a hypothesis test of the form $H_0: \theta^*\in \Theta_0$ versus $H_1:\theta^* \in \Theta_1$, where $\Theta_0\cap \Theta_1=\emptyset$. First, suppose we have a simple null hypothesis, $H_0:\theta^*=\theta_0$, where knowing $\theta_0$ fully specifies a generative model for the data. Let $B_\alpha(\theta_0;s,\omega)$ be an event such that $P_{s\sim F_{\theta_0},\omega\sim Q}(B_\alpha(\theta_0;s,\omega))\geq 1-\alpha$, just as in Section \ref{s:conf:abstract}. Define a rejection decision as $1-I(B_\alpha(\theta_0;s,\omega))$, which has type I error $\leq \alpha$. If we have an event $B_\alpha(\theta_0;s,\omega)$ defined for all $\alpha$, and these events are nested ($B_{\alpha_0}(\theta_0;s,\omega)\supset B_{\alpha_1}(\theta_0;s,\omega)$ when $\alpha_0\leq \alpha_1$), then a $p$-value is $p(\theta_0)=\inf \{\alpha|I(B_\alpha(\theta_0;s,\omega))=0\}$.

Now suppose that we have a composite null hypothesis $H_0:\theta^*\in \Theta_0$. Consider the rejection criteria $\inf_{\theta\in \Theta_0}[1-I(B_\alpha(\theta;s,\omega))]$, which we interpret that we only reject if { none of the $B_\alpha$ events occur for any $\theta\in \Theta_0$}. We see that this test also has type I error $\leq \alpha$. Note that $\sup_{\theta\in \Theta_0} p(\theta)$ is a $p$-value for $H_0: \theta^*\in \Theta_0$, where $p(\theta_0)$ is a $p$-value for $H_0: \theta^*=\theta_0$, which agrees with the $p$-value formula given in \citet[Corollary 1]{xie2022repro}.  If $B_\alpha(\theta;s,\omega)$ is constructed from an exchangeable sequence of test statistics, then this $p$-value can be expressed in a simple form in Theorem \ref{thm:pvalue}, accounting for the Monte Carlo errors. Furthermore, Algorithm \ref{alg:pval} gives a { pseudo code implementation of }this $p$-value.

\begin{restatable}{thm}{thmpvalue}\label{thm:pvalue}
Let $u_0,u_1,\ldots, u_R\iid P$, $\theta^*\in \Theta$ be the true parameter, $s=G(\theta^*,u_0)\in \RR^d$ be the observed value, $\theta\in\Theta_0\subset \Theta$, $s_i(\theta)=G(\theta,u_i)\in \mscr X$ be the repro samples, and $T_\theta:\RR^d\times \RR^{d\times (R+1)} \rightarrow \RR$ be a permutation-invariant statistic, where small values indicate that a sample is ``unusual''.  Call $T_{\text{obs}}(\theta) = T_\theta(s;s,s_1(\theta),\ldots, s_R(\theta))$, and let $T_{(1)}(\theta)$, $\ldots$, $T_{(R+1)}(\theta)$ be the order statistics of $T_\theta(s;s,s_1(\theta),\ldots, s_R(\theta))$, $T_\theta(s_1(\theta);s,s_1(\theta),\ldots, s_R(\theta)),\ldots,$\ $ T_\theta(s_R(\theta);s,s_1(\theta),\ldots,s_R(\theta))$.   Then, 
\[p=\frac{ \sup_{\theta\in \Theta_0} \#\{T_{(i)}(\theta)\leq T_{\text{obs}}(\theta)\}}{R+1},\]
is a $p$-value for the null hypothesis $H_0: \theta^*\in \Theta_0$.
\end{restatable}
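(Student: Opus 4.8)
The plan is to reduce Theorem~\ref{thm:pvalue} to the combination of Theorem~\ref{thm:construction} and the abstract duality between confidence sets and tests described in the preceding paragraphs. The key observation is that the claimed $p$-value is exactly $\sup_{\theta\in\Theta_0} p(\theta)$, where $p(\theta)$ is the $p$-value for the simple null $H_0:\theta^*=\theta$ obtained from the nested family of events $B_\alpha(\theta;s,(u_i)_{i=1}^R)$ built in Theorem~\ref{thm:construction}. So the work splits into two parts: (i) identify, for fixed $\theta$, the right nested family $\{B_\alpha(\theta)\}_\alpha$ and compute $p(\theta)=\inf\{\alpha \mid I(B_\alpha(\theta;s,(u_i)_{i=1}^R))=0\}$ in closed form; (ii) take the supremum over $\theta\in\Theta_0$ and check that it matches the stated formula.

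First I would fix $\theta$ and, following Example~\ref{ex:ab} item~2 (the ``small values are unusual'' one-sided case), take the event $B_\alpha(\theta;s,(u_i)_{i=1}^R)=\{T_{\text{obs}}(\theta)\in[T_{(a)}(\theta),T_{(R+1)}(\theta)]\}=\{T_{\text{obs}}(\theta)\ge T_{(a)}(\theta)\}$ with $a=\lfloor\alpha(R+1)\rfloor+1$, which is the choice making $b-a=R+1-a=R-\lfloor\alpha(R+1)\rfloor\ge\lceil(1-\alpha)(R+1)\rceil-1$, so Theorem~\ref{thm:construction} applies and \eqref{eq:predictionSet} holds at $\theta$. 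These events are nested in $\alpha$ because $a=a(\alpha)$ is nondecreasing in $\alpha$, so as $\alpha$ shrinks the threshold index $a$ shrinks and the acceptance interval grows. Then $I(B_\alpha(\theta))=0$ iff $T_{\text{obs}}(\theta)<T_{(a(\alpha))}(\theta)$, i.e.\ iff strictly fewer than $a(\alpha)=\lfloor\alpha(R+1)\rfloor+1$ of the $R+1$ values $T_\theta(s;\cdot),T_\theta(s_1(\theta);\cdot),\ldots$ are $\le T_{\text{obs}}(\theta)$; writing $K(\theta)=\#\{i: T_{(i)}(\theta)\le T_{\text{obs}}(\theta)\}\ge 1$ (the observed value is always one of them), rejection at level $\alpha$ happens iff $K(\theta)\le\lfloor\alpha(R+1)\rfloor$. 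Taking the infimum over such $\alpha$ gives $p(\theta)=K(\theta)/(R+1)$ — this is the one spot needing a small ceiling/floor verification: $\lfloor\alpha(R+1)\rfloor\ge K(\theta)$ first occurs at $\alpha=K(\theta)/(R+1)$.

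Next I would invoke the composite-null argument already laid out in the text: the test that rejects $H_0:\theta^*\in\Theta_0$ only when $I(B_\alpha(\theta;s,(u_i)_{i=1}^R))=0$ for \emph{all} $\theta\in\Theta_0$ has type~I error at most $\alpha$ (since under $\theta^*\in\Theta_0$ we fail to reject at least whenever $B_\alpha(\theta^*)$ occurs, which has probability $\ge 1-\alpha$ by \eqref{eq:predictionSet}), and consequently $\sup_{\theta\in\Theta_0}p(\theta)$ is a valid $p$-value. Substituting $p(\theta)=K(\theta)/(R+1)$ yields $\sup_{\theta\in\Theta_0}K(\theta)/(R+1)=\big(\sup_{\theta\in\Theta_0}\#\{T_{(i)}(\theta)\le T_{\text{obs}}(\theta)\}\big)/(R+1)$, which is exactly the claimed expression. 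A final bookkeeping remark: one should confirm the supremum over $\Theta_0$ interchanges correctly with the infimum defining $p(\theta)$ — concretely, that $\sup_\theta\inf\{\alpha: I(B_\alpha(\theta))=0\}$ equals $\inf\{\alpha: \forall\theta,\ I(B_\alpha(\theta))=0\}$ — which holds because the events are nested and monotone in $\alpha$ uniformly in $\theta$ (the threshold $a(\alpha)$ does not depend on $\theta$).

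I expect the main obstacle to be purely the discrete/indexing bookkeeping in step~(i): pinning down exactly which integer $a(\alpha)$ to use so that (a) Theorem~\ref{thm:construction}'s hypothesis $b-a\ge\lceil(1-\alpha)(R+1)\rceil-1$ is met with $b=R+1$, (b) the family is genuinely nested in $\alpha$, and (c) the infimum collapses to $K(\theta)/(R+1)$ without an off-by-one error. Everything else — the coverage bound, the composite-null reduction, the final substitution — is a direct appeal to results and arguments already established earlier in the paper. There is no analytic difficulty; the care is entirely in the floors and ceilings and in making sure the ``$\le$'' versus ``$<$'' in the rejection event is handled consistently with how ties in the order statistics are counted.
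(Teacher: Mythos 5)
Your proposal is correct and follows essentially the same route as the paper: establish that $p(\theta)=\#\{T_{(i)}(\theta)\leq T_{\text{obs}}(\theta)\}/(R+1)$ is a valid simple-null $p$-value via exchangeability of the $R+1$ statistics, then observe that the supremum over $\Theta_0$ dominates $p(s;\theta^*)$ so that $P(p\leq\alpha)\leq P(p(s;\theta^*)\leq\alpha)\leq\alpha$. The only cosmetic difference is that the paper gets the simple-null step directly from Lemma A.2 (the exchangeable-rank bound) rather than by inverting the nested family $B_\alpha$ from Theorem 3.6, which is the same combinatorial fact packaged differently; your floor/ceiling bookkeeping checks out.
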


{ In Algorithm \ref{alg:pval}, we give a simple procedure to calculate the $p$-value of Theorem \ref{thm:pvalue}, which in Proposition \ref{prop:pvalue} is proved to be correct.} 

\begin{restatable}{prop}{proppvalue}\label{prop:pvalue}
Under the same assumptions as Theorem \ref{thm:pvalue} and assuming that $T_\theta$ takes values in $[0,1]$, the output of Algorithm \ref{alg:pval} is equal to the $p$-value, stated in Theorem \ref{thm:pvalue}.
\end{restatable}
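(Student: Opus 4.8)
The plan is to unpack Algorithm \ref{alg:pval} and match it term-by-term against the closed-form $p$-value of Theorem \ref{thm:pvalue}. Since the algorithm's pseudocode is not shown in the excerpt, I reconstruct it from context: it must, for each candidate $\theta\in\Theta_0$, simulate $s_i(\theta)=G(\theta,u_i)$, evaluate $T_{\text{obs}}(\theta)$ and the $T_i(\theta)=T_\theta(s_i(\theta);s,s_1(\theta),\ldots,s_R(\theta))$, and then return $\big(\sup_{\theta\in\Theta_0}\#\{T_i(\theta)\le T_{\text{obs}}(\theta)\}+1\big)/(R+1)$ — mirroring the objective $M$ used in Algorithm \ref{alg:accept}, and using the same $T_{\text{obs}}(\theta)$ tie-breaking/continuation trick (line 4 of Algorithm \ref{alg:accept}) to make the integer-valued supremum tractable for numerical optimizers. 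The heart of the argument is therefore the combinatorial identity
\[
\#\{i\in\{1,\ldots,R+1\} : T_{(i)}(\theta)\le T_{\text{obs}}(\theta)\} \;=\; \#\{i\in\{1,\ldots,R\} : T_i(\theta)\le T_{\text{obs}}(\theta)\}+1,
\]
which holds because $T_{\text{obs}}(\theta)$ is itself one of the $R+1$ values whose order statistics are $T_{(1)}(\theta),\ldots,T_{(R+1)}(\theta)$, so it is always counted on the left, and the remaining contributions come exactly from those $T_i(\theta)$, $i=1,\ldots,R$, that do not exceed $T_{\text{obs}}(\theta)$.

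First I would fix an arbitrary $\theta\in\Theta_0$ and establish the displayed identity, being careful about ties: the ``$\le$'' convention is consistent on both sides, and if several $T_i(\theta)$ equal $T_{\text{obs}}(\theta)$ they are all counted on both sides, so no adjustment is needed. Next I would take the supremum over $\theta\in\Theta_0$ of both sides — the supremum commutes with adding the constant $1$ — and divide by $R+1$, obtaining that the quantity Algorithm \ref{alg:pval} is designed to return equals
\[
\frac{\sup_{\theta\in\Theta_0}\#\{T_i(\theta)\le T_{\text{obs}}(\theta)\}+1}{R+1}
= \frac{\sup_{\theta\in\Theta_0}\#\{T_{(i)}(\theta)\le T_{\text{obs}}(\theta)\}}{R+1},
\]
which is exactly the $p$-value of Theorem \ref{thm:pvalue}. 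Then I would verify that the algorithm as implemented actually computes this supremum: here the hypothesis $T_\theta\in[0,1]$ enters, because the algorithm (following the Algorithm \ref{alg:accept} remark) optimizes the perturbed objective $\#\{T_i(\theta)\le T_{\text{obs}}(\theta)\}+T_{\text{obs}}(\theta)$ rather than the raw count; since $0\le T_{\text{obs}}(\theta)<1$, the perturbation is strictly smaller than one unit, so for any two parameters the perturbed objective and the integer count induce the same ordering on the ``plateau boundaries,'' and $\lfloor M\rfloor$ (or the analogous rounding in the algorithm) recovers $\sup_\theta\#\{T_i(\theta)\le T_{\text{obs}}(\theta)\}+1$ exactly. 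I would state this rounding step as a short lemma: for $x\in[0,1)$ and integer $m$, $\lfloor m+x\rfloor=m$.

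The main obstacle is not any deep inequality — it is bookkeeping precision: getting the ``$+1$'' versus the inclusion of $s$ among the $R+1$ samples exactly right, handling ties in $T$ under the chosen inequality direction, and confirming that the continuous perturbation by $T_{\text{obs}}(\theta)$ used for numerical tractability does not change the value returned (only which $\theta$ attains it). A secondary subtlety is that $\Theta_0$ may be a continuum, so ``$\sup$'' is genuine and the algorithm computes it via the optimization subroutine; strictly, correctness of the returned number relies on the optimizer finding the global supremum of the perturbed objective, which I would flag as an assumption (consistent with the analogous caveat already noted for Algorithm \ref{alg:accept}) rather than prove. Once these points are pinned down, the equality with Theorem \ref{thm:pvalue}'s $p$-value, and hence its validity as a $p$-value, is immediate.
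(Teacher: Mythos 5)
Your overall route is the same as the paper's: unpack Algorithm \ref{alg:pval}, use the identity $\#\{T_{(i)}(\theta)\leq T_{\text{obs}}(\theta)\}=\#\{T_i(\theta)\leq T_{\text{obs}}(\theta)\}+1$ (valid because $T_{\text{obs}}(\theta)$ is one of the $R+1$ values being ordered), and then argue that flooring strips off the continuous perturbation $T_{\text{obs}}(\theta)$ added for numerical tractability. But there is a concrete gap in the last step. You write ``since $0\le T_{\text{obs}}(\theta)<1$, the perturbation is strictly smaller than one unit'' and invoke the lemma $\lfloor m+x\rfloor=m$ for $x\in[0,1)$. The hypothesis, however, is only that $T_\theta$ takes values in the \emph{closed} interval $[0,1]$, so $T_{\text{obs}}(\theta)=1$ is permitted, and your lemma does not apply there. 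This boundary case is not a technicality one can wave away: when $T_{\text{obs}}(\theta)=1$ every $T_i(\theta)$ satisfies $T_i(\theta)\leq T_{\text{obs}}(\theta)$, so the count is $R$ while the perturbed objective equals $R+1$; then $\lfloor M\rfloor+1=R+2$, which overshoots the correct value $R+1=\#\{T_{(i)}(\theta)\leq T_{\text{obs}}(\theta)\}$ and would yield a ``$p$-value'' exceeding $1$. This is exactly why line 5 of Algorithm \ref{alg:pval} returns $\min\{\lfloor M\rfloor+1,\,R+1\}$ rather than $\lfloor M\rfloor+1$, and the paper's proof of Proposition \ref{prop:pvalue} is devoted almost entirely to checking that the $\min$ repairs this case. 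Your proposal never mentions the clipping by $R+1$, which strongly suggests the case was overlooked rather than silently handled.

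Everything else in your outline is sound and matches the paper: the tie-handling under the ``$\leq$'' convention needs no adjustment, the supremum commutes with adding the constant, and the caveat that the returned number equals the mathematical supremum only if the optimizer actually attains it is consistent with how the paper treats Algorithm \ref{alg:accept}. To close the gap, split into the two cases $T_{\text{obs}}(\theta)<1$ (your argument) and $T_{\text{obs}}(\theta)=1$ (where $\lfloor M\rfloor+1=R+2$ but $\min\{R+2,R+1\}=R+1$, which is the correct count), and note that the clipped quantity agrees with $\sup_{\theta\in\Theta_0}\#\{T_{(i)}(\theta)\leq T_{\text{obs}}(\theta)\}$ in both cases.
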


\begin{remark}
    In Algorithms \ref{alg:accept}-\ref{alg:pval}, the test statistic is restricted to take values in $[0,1]$. However, this is not much of a restriction, since any real-valued test statistic can be transformed to the interval $[0,1]$ using a cdf, such as $\Phi$. 
\end{remark}

\begin{remark}
    When implementing Algorithm \ref{alg:pval},  $\hat\theta(s)= \argmax_{\theta\in \Theta_0} [\#\{T_i(\theta)\leq T_{\text{obs}}(\theta)\}+T_{\text{obs}}(\theta)]$ may be used as an intermediate computation, but this value could have independent interest as an estimator. We can interpret that $\hat\theta(s)$ { is the parameter that makes the data the ``most plausible''}, since it maximizes the rank $\#\{T_{i}(\theta)\leq T_{\text{obs}}(\theta)\}$. Similarly, taking $\Theta_0=\Theta$, $\hat\theta(s)$ is an element of every confidence set, as the coverage goes to zero. We may also interpret $\hat\theta(s)$ as the mode of an implicit \emph{confidence distribution}; in the one-dimensional case, the mode of the confidence distribution is known to be a consistent estimator under some regularity conditions \citep[Section 4.2]{xie2013confidence}, and can be viewed as a generalization of maximum likelihood estimation.
\end{remark}

\begin{algorithm}[t]
\caption{\texttt{pvalue}($\Theta_0$, $(u_i)_{i=1}^R$, $G$, $s$, $T$)}
\scriptsize
INPUT: null hypothesis $\Theta_0$, seeds $(u_i)_{i=1}^R$, observed statistic $s\in \mscr X$, exchangeable statistic $T_\theta:\mscr X\times \mscr X^{R+1}\rightarrow [0,1]$ (low values give evidence against the null hypothesis), and $G(\theta,u)$ is the generating equation for $s$.

\begin{algorithmic}[1]
  \setlength\itemsep{0em}
  \STATE For a given $\theta$, denote $s_i(\theta) = G(\theta,u_i)$.
  \STATE Denote $T_i(\theta) = T_\theta(s_i(\theta);s,s_1(\theta),\ldots, s_R(\theta))$
  \STATE Denote $T_{\text{obs}}(\theta) = T_\theta(s;s,s_1(\theta),\ldots, s_R(\theta))$
    \STATE Set $M = \sup_{\theta\in \Theta_0}  [\#\{T_i(\theta)\leq T_{\text{obs}}(\theta)\}+T_{\text{obs}}(\theta)$]
    \STATE Set $p = \frac{1}{R+1} \min\{\lfloor M\rfloor+1,R+1\}$
\end{algorithmic}
OUTPUT: $p$ 
\label{alg:pval}
\end{algorithm}

\section{Simulations}\label{s:simulations}
In this section, we conduct multiple simulation studies, { applying our methodology to various privatized data settings,} 
with the following goals: 1) validate the coverage/type I errors of the simulation-based confidence intervals and hypothesis tests, 2) illustrate the improved performance of our methodology against other techniques such as parametric bootstrap, 3) demonstrate the flexibility of our inference methodology by considering a variety of models and privacy mechanisms, and 4) use our framework to understand how statistical inference is affected by clamping.  { In each simulation, we show that compared to the state-of-the-art methods, our methodology is able to offer comparable performance (power/confidence interval width) with more conservative type I error/coverage. Additional simulations are found in Section B of the supplementary materials.}


\subsection{Poisson distribution}\label{s:poisson}
{ In this section, we produce DP confidence intervals for Poisson data using our simulation-based methodology and investigate the effect of clamping on the confidence interval width.}

Suppose that $x_1,\ldots, x_n\iid \mathrm{Pois}(\theta^*)$, and we observe the privatized statistic $s = \frac 1n \sum_{i=1}^n [x_i]_0^c+(c/(n\ep))N$, for some noise distribution $N$, where $c$ is a fixed non-negative integer. Call $u^x_i\iid \mathrm{Unif}(0,1)$ and $u_x = (u^x_i)_{i=1}^n$ and $u_{DP}\overset d = N$. Then we have the transformation $x_i=F^{-1}(u^x_i;\theta)$, where $F^{-1}(\cdot;\theta)$ is the quantile function of $\mathrm{Pois}(\theta)$, and $G(\theta,(u_x,u_{DP})) = \frac 1n \sum_{i=1}^n[x_i]_0^c+\frac{c}{n\ep}u_{DP}$. 

To generate a simulation-based confidence interval { for $\theta$ }, we produce $R$ i.i.d. copies of $(u_x,u_{DP})$, and use the $(1-\alpha)$ sets according to part 1 of Example \ref{ex:ab} to form the confidence interval in Lemma \ref{lem:confidence}, { using $s$ itself as the test statistic}.
\begin{figure}[t]
    \centering
    \includegraphics[width=.48\linewidth]{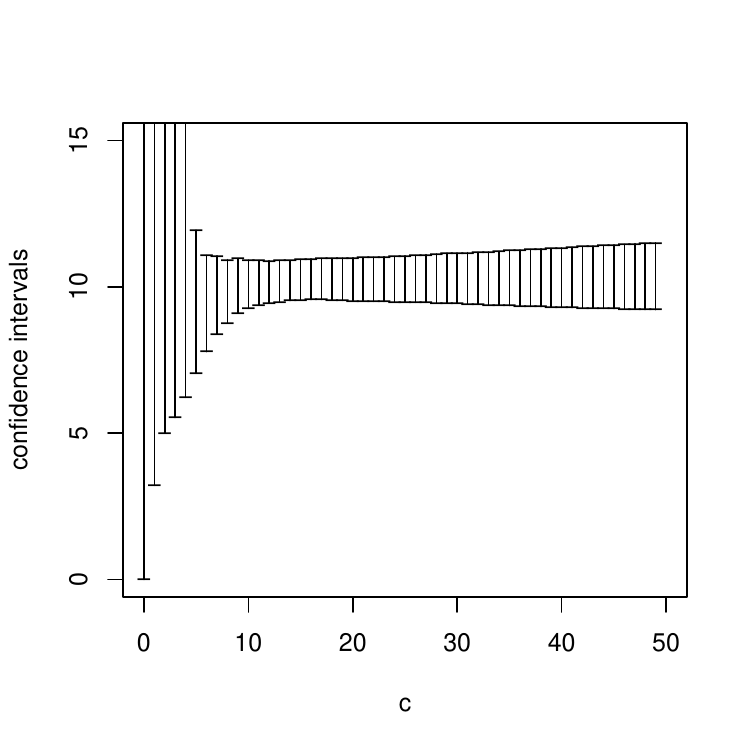}
    \includegraphics[width=.48\linewidth]{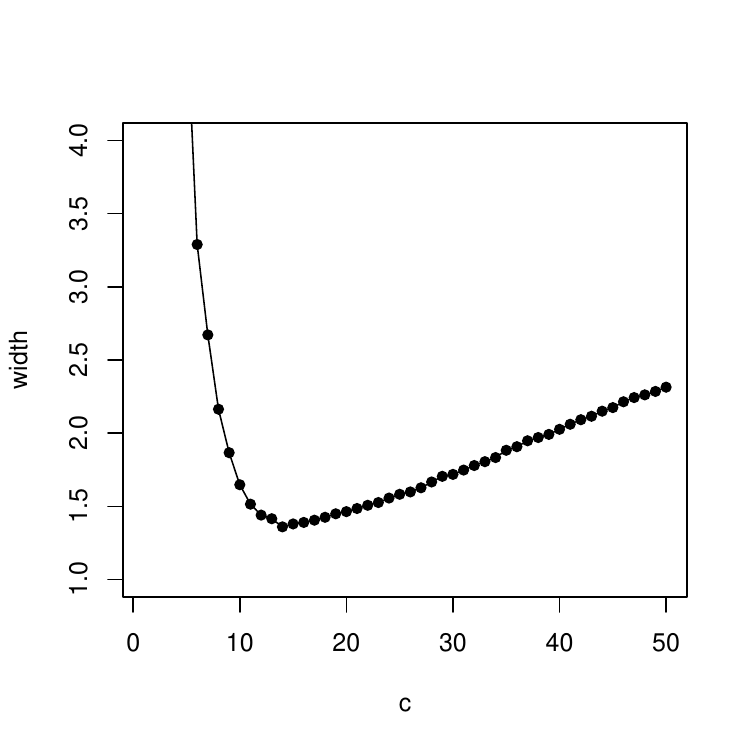}
    \caption{For fixed seeds in data generation, and in simulation-based inference, confidence intervals for $x_1,\ldots, x_{n} \sim \mathrm{Pois}(10)$ based on $s = \frac 1n \sum_{i=1}^n [x_i]_0^c+(c/(n\ep))N$, where $n=100$, $R=1000$, $N\sim N(0,1)$ and $\ep=1$. Left: $95\%$ confidence intervals as $c$ varies. Right: width of the $95\%$ confidence intervals as $c$ varies. For $c\leq 4$, the upper confidence limit is $\infty$. }
    \label{fig:poisson}
\end{figure}

In Figure \ref{fig:poisson}, we fix both the data generating seeds as well as the seeds for simulation-based inference, and compare the constructed confidence intervals (with provable $95\%$ coverage) as the quantity $c$ is varied. For the simulation, we set $n=100$, $\theta=10$, $\ep=1$, and $N\sim N(0,1)$, so that $s$ satisfies 1-GDP, and build our CIs from $R=1000$ repro samples. In the left plot of Figure \ref{fig:poisson}, we see the generated confidence intervals as $c$ varies. In the right plot of Figure \ref{fig:poisson}, we plot the widths of the intervals. We see that for $c\geq 15$, the width of the interval increases at an approximately linear rate, which reflects the fact that the privacy noise increases linearly in $c$. On the other hand, for $c\leq 10$, we see a rapid increase in the CI width when $c$ becomes smaller. In fact, in this simulation, for $c\leq 4$, the upper bound for the confidence interval is $+\infty$. 

Our simulation highlights the fact that while valid inference can be done with virtually any clamping threshold, there can be a much higher price to pay for choosing a threshold too low versus too high. Nevertheless, with $c=10$, approximately half of the data is clamped, and we are still able to get near minimal confidence intervals. For this simulation, the optimal clamping threshold is $c=14$, which alters about $8\%$ of the datapoints, giving a CI width of 1.46. 
In the right plot of Figure \ref{fig:poisson}, we see that even with $c=50$, we can get an interval with width 2.47, which is less than twice the optimal width. While this is a noticeable loss in accuracy, we are still able to make informative inference even with a very suboptimal choice of  $c$. See Section B.1 in the supplement for another example investigating the effect of the clamping threshold on confidence interval width.


\subsection{Location-scale normal} \label{s:normal}
{ In this section, we produce private confidence intervals for the location and scale parameters for normally distributed data, and compare the coverage and width against the parametric bootstrap \citep{ferrando2022parametric,du2020differentially} and the method of \citet{karwa2018finite}.}

Suppose that $x_1,\ldots, x_n\iid N(\mu^*,\sa^*)$, and we are interested in a confidence set for $(\mu^*,\sa^*)$. Suppose we are given clamping bounds $L$ and $U$, so that we work with the clamped data $[x_i]_{L}^U$. It is easy to see that the clamped mean $\overline{x_c}=\frac 1n \sum_{i=1}^n [x_i]_L^U$ has sensitivity $\frac{U-L}{n}$, and \citet{du2020differentially} derived the sensitivity of the sample variance $s^{(2)}_c=\frac{1}{n-1}\sum_{i=1}^n ([x_i]_{L}^U-\overline{x_c})^2$ is $\frac{(U-L)^2}{n}$, where the ``$c$" subscript reminds us that these statistics are for the clamped data. If $N_1,N_2\iid N(0,1)$, then a privatized statistic is $s = (\ol x_c+\frac{U-L}{n\ep} N_1, s^{(2)}_c+\frac{(U-L)^2}{n\ep}N_2)$, which satisfies $(\sqrt 2 \ep)$-GDP. { To see how this problem fits into our simulation-based inference, we sample $u^x_i \iid N(0,1)$ and use the transformation $x_i = \sa u_i^x+\mu$ to generate simulated datasets; for the DP mechanism, the seeds are the variables $N_1$ and $N_2$.}


\begin{table}[t]
    \centering
    \begin{tabular}{ll|ll}
    && $\mu$ & $\sigma$\\\hline
&    Empirical Coverage     & 0.989 (0.003)&  0.984 (0.004)\\
 Repro Sample &   Average Width    & 0.599 (0.003) &  0.756 (0.004)\\\hline
      &  Empirical Coverage     &0.688 (0.015)&0.003 (0.001) \\
 Parametric Bootstrap (percentile)&    Average Width    &0.311 (0.001)&0.291 (0.024)\\\hline
  &  Empirical Coverage     &0.859 (0.011)&0.819 (0.012)\\
 Parametric Bootstrap (simplified $t$)&    Average Width    &0.311 (0.001)&0.291 (0.024)
    \end{tabular}
    \caption{Nominal 95\% confidence intervals for location-scale normal. True parameters are $\mu^*=1$ and $\sigma^*=1$. Sample $x_1,\ldots, x_{100} \iid N(1,1)$, which are clamped to $[0,3]$. Gaussian noise with scale parameter $0.03$ is added to the sample mean of the clamped data, and Gaussian noise with scale parameter $0.09$ is added to the sample variance of the clamped data, jointly satisfying $\sqrt{2}$-GDP. The results are over 1000 replicates.}
    \label{tab:loc_scale_normal}
\end{table}

\begin{table}[t]
    \centering
    \begin{tabular}{l|ll}
    & Empirical Coverage & Average Width \\\hline
Repro Sample & { 0.989 (0.003) } & { 0.200 (0.001)}\\\hline
\citep{karwa2018finite} & { 0.994 (0.002)} & { 3.314 (0.013)}
    \end{tabular}
    \caption{Nominal 95\% confidence intervals for the location $\mu$ in location-scale normal. True parameters are $\mu^*=1$ and $\sigma^*=1$ where both are unknown. Sample $x_1,\ldots, x_{1000} \iid N(1,1)$, which are clamped to $[0,3]$. The results are under $1$-DP and computed over 1000 replicates. We use $R=200$ and Mahalanobis depth in repro samples.}
    \label{tab:loc_scale_normal_karwa}
\end{table}

For a simulation study, we generate 1000 replicates using  $n=100$, $R=200$, $\mu^*=1$, $\sigma^*=1$, $\epsilon=1$, $U=3$, and $L=0$. 
Using \eqref{eq:project} and Algorithm \ref{alg:CI} with the Mahalanobis depth function, we get $95\%$ (simultaneous) confidence intervals for $\mu$ and $\sigma$ with average coverage 0.989 and width 0.599 for $\mu$ and coverage 0.984 and width 0.756 for $\sigma$, as shown in Table \ref{tab:loc_scale_normal}. 

We compare against the parametric bootstrap, using the estimator $\hat\theta=(s^{(1)}, \sqrt{\max\{s^{(2)},0\}})$ and 200 bootstrap samples; note that the parametric bootstrap targets two marginal $95\%$ confidence intervals, whereas our method gives 95\% simultaneous intervals. Using the percentile method parametric bootstrap gives average coverage of 0.688 and width 0.311 for $\mu$, and coverage of 0.003 and width 0.291 for $\sigma$ (over 1000 replicates). While the parametric bootstrap intervals are much smaller, the coverage is unacceptably low. The parametric bootstrap can be improved to some extent by using a simplified version of the bootstrap-$t$ interval, which is based on the empirical distribution of $2\hat\theta-\hat\theta_b$, where $\hat\theta_b$ is the value from the $b^{th}$ bootstrap; this method offers some bias correction and improves the coverage to 0.859 and 0.819 for $\mu$ and $\sigma$, respectively. While more complex bias-correction methods could potentially improve the parametric bootstrap, many available methods are inapplicable for privatized data, since the original data is unobserved. 

{ We also compare our approach against the confidence interval of \citet{karwa2018finite}. However, the Algorithm 5 of \citet{karwa2018finite} satisfies $\ep$-DP rather than Gaussian DP, so to have a fair comparison, we change the distributions of $N_1,N_2$ above to be $\mathrm{Laplace}(0,1/2)$ so that our summary statistics also satisfy $\ep$-DP with $\ep=1$. For the simulation study, we use the same settings as above, except we only produce confidence intervals for $\mu$ and we set confidence level $1-\alpha=0.95$ and $n=1000$, since the method of \citet{karwa2018finite} only targets $\mu$ and does not perform well for small $n$. Following the usage of Algorithm 5 of \citet{karwa2018finite} by \citet{du2020differentially}, we set $\alpha_0=\alpha_1=\alpha_2=\alpha_3=\alpha/2$, $\ep_1=\ep_2=\ep_3=\ep/3$. We set $(\mu_{\min},\mu_{\max})=(-10, 10)$ and $(\sigma_{\min},\sigma_{\max})=(10^{-8}, 10)$ for both approaches. The results are found in Table \ref{tab:loc_scale_normal_karwa}, where we see that both confidence interval methods have conservative coverage compared to the $95\%$ nominal level, but the simulation-based method has a significantly shorter average width of $0.2$ compared to $3.3$.}

\subsection{Simple linear regression hypothesis testing}\label{s:regression}

\begin{figure}
    \centering
    \includegraphics[width=.9\linewidth]{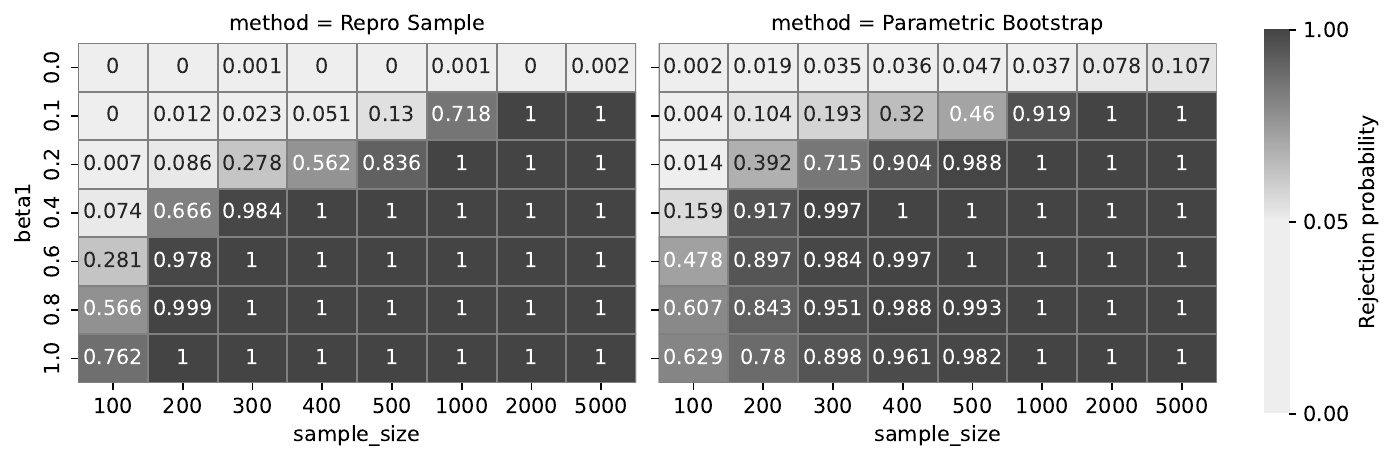}
    \caption{The rejection probability for hypothesis testing on $H_0:\beta_1^*=0$ and $H_1:\beta_1^*\neq 0$ in a linear regression model $Y=\beta_0^*+X\beta_1^* + \epsilon$ with repro and parametric bootstrap \citep{alabi2022hypothesis}. The significance level is 0.05, and the values in the table are calculated from 1000 replicates.}
    \label{fig:LR-HT-gdp1}
\end{figure}

This example follows the setting in \citep{alabi2022hypothesis}, and we compare their parametric bootstrap method, Algorithm 2 in \citep{alabi2022hypothesis}, with our simulation-based method. Consider the model $Y=\beta_0^* +X\beta_1^* +  \epsilon$ where we want to test $H_0:\beta_1^*=0$ and $H_1:\beta_1^*\neq 0$. We assume $x_i\sim N(\EE[X], \mathrm{Var}(X)), \ep_i\sim N(0, \mathrm{Var}(\ep)), y_i= \beta_0^* +x_i\beta_1^* +  \epsilon_i$, and we let $\theta^*=(\beta_1^*, \beta_0^*, \EE[X], \mathrm{Var}(X), \mathrm{Var}(\ep))$ which fully parameterizes the model; { the parameters other than $\beta_1^*$ are viewed as nuisance parameters}. Both the parametric bootstrap and our simulation-based method use the sufficient statistic perturbation with Gaussian mechanism: Let $\tilde{\bar x}=\frac{1}{n} \sum_{i=1}^n [x_i ]_{-\Delta}^{\Delta}+\frac{2\Delta}{ (\mu/\sqrt{5}) n}N_1$, $\widetilde{\overbar {x^2}}=\frac{1}{n}\sum_{i=1}^n {[x^2_i ]_{0}^{\Delta^2}}+\frac{\Delta^2}{ (\mu/\sqrt{5}) n}N_2$, $\tilde{\bar y}=\frac{1}{n}\sum_{i=1}^n { [y_i ]_{-\Delta}^{\Delta}}+\frac{2\Delta}{ (\mu/\sqrt{5}) n}N_3$, $\widetilde{\overbar {xy}}=\frac{1}{n}\sum_{i=1}^n {[x_i y_i]_{-\Delta^2}^{\Delta^2}}+\frac{2\Delta^2}{ (\mu/\sqrt{5}) n}N_4$, $\widetilde{\overbar {y^2}}=\frac{1}{n}\sum_{i=1}^n{[y_i^2]_{0}^{\Delta^2}}+\frac{\Delta^2}{ (\mu/\sqrt{5}) n}N_5$ where $N_i\iid N(0,1)$; releasing $s=(\tilde{\bar x}, \widetilde{\overbar {x^2}}, \tilde{\bar y}, \widetilde{\overbar {xy}}, \widetilde{\overbar {y^2}})$ is guaranteed to satisfy $\mu$-GDP.  Using the released $s$, \citet{alabi2022hypothesis} estimate $\theta$ and calculate the observed test statistic, then perform parametric bootstrap under the null hypothesis to compare the observed test statistic with the generated test statistics. In contrast, our Algorithm \ref{alg:pval} finds the $\theta$ such that $s$ fits best into $(s_i(\theta))_{i=1}^R$ with respect to its Mahalanobis depth, and  we compare the corresponding $p$-value with the significance level.\footnote{To reduce the computational cost, we can stop Algorithm \ref{alg:pval} if there exists one $\theta\in \Theta_0$ with $p = \frac{1}{R+1}[\#\{T_i(\theta)\leq T_{\text{obs}}(\theta)\}+1]$ greater than the significance level and state ``do not reject the null hypothesis.''}. { Similar to Section \ref{s:normal}, we use several $N(0,1)$ random variables as the seeds for the data, which we transform using a linear map to produce the data $(x_i,y_i)$; the privacy seeds are $N_1,\ldots, N_5$.}

\begin{figure}
    \centering
    \includegraphics[width=.9\linewidth]{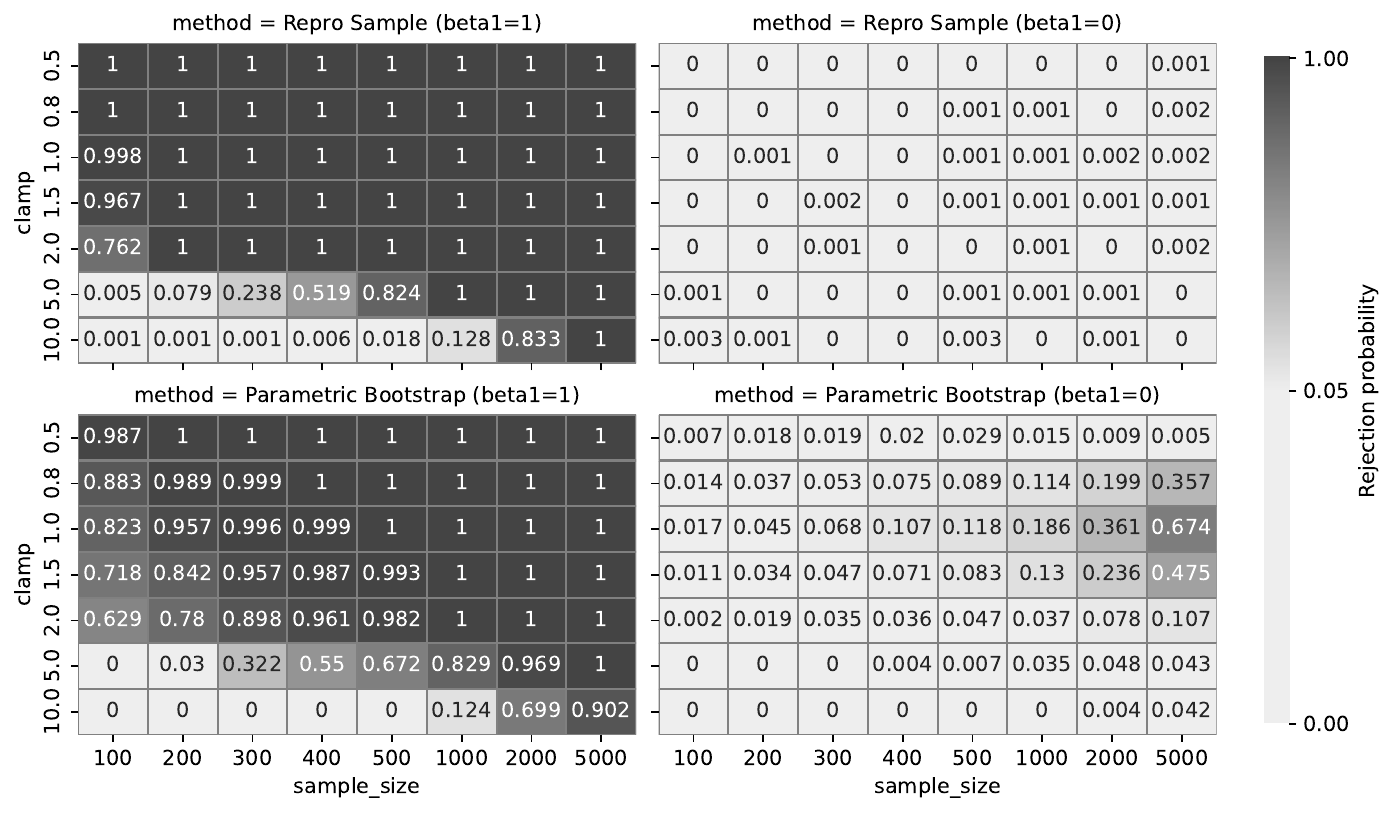}
    \caption{The rejection probability for hypothesis testing on $H_0:\beta_1^*=0$ and $H_1:\beta_1^*\neq 0$ in a linear regression model $Y=\beta_0^* + X\beta_1^* + \epsilon$ with repro and parametric bootstrap. The settings are the same as in Figure \ref{fig:LR-HT-gdp1} except for the clamping region.}
    \label{fig:LR-HT-compareclamp}
\end{figure}

Our simulation results in Figure \ref{fig:LR-HT-gdp1} show the rejection probability for $\beta_1^*=0$, 0.1, 0.2, 0.4, 0.6, 0.8, 1 under the settings of the sample size $n=100$, 200, 300, 400, 500, 1000, 2000, 5000, $R=200$, $\beta_0^*=-0.5$, $x_i \sim N(0.5, 1)$, $\ep_i \sim N(0, 0.25)$, the clamping range $\Delta=2$, the significance level $\alpha=0.05$, and the privacy guarantee $1$-GDP. The rejection probability by our simulation-based method is similar to the parametric bootstrap method except that when $\beta_1^*=0$ and $n\geq 2000$, the parametric bootstrap fails to control the type I error below the significance level of $0.05$, whereas our repro sample maintains conservative type I errors.

The success of our simulation-based method is natural because of our finite sample guarantee in Theorem \ref{thm:pvalue}. Although \citet{alabi2022hypothesis} proved in their Theorem 5.1 that their DP F-statistic converged in distribution to the asymptotic distribution of the non-private F-statistic, one of their assumptions is that $P(\exists i\in[n], y_i \notin [-\Delta_n, \Delta_n]) \rightarrow 0 \text{ and } \forall i\in[n],\ x_i \in[-\Delta_n, \Delta_n]$ as $n\rightarrow\infty$, { which may be not achievable since $\Delta_n$ requires knowing the true parameters and every $x_i$. Without such knowledge, it is likely that $\Delta_n$ is chosen incorrectly and the clamping procedure effectively changes the data distribution. Furthermore, even if $\Delta_n$ is chosen correctly, the results of Alabi and Vadhan (2022) are only asymptotic, lacking finite-sample guarantees. }
We empirically verify this limitation by our simulation results in Figure \ref{fig:LR-HT-compareclamp}, which shows the rejection probability for $\Delta=0.5$, 0.8, 1, 1.5, 2, 5, 10. In the bottom right subfigure of Figure \ref{fig:LR-HT-compareclamp}, the parametric bootstrap drastically fails to maintain the significance level $0.05$ when $n\geq 2000$ and $\Delta=0.8$, 1, 1.5, 2; what is especially concerning is that the type I errors seem to get worse, rather than better, as the sample size increases. On the other hand, the type I errors for our simulation-based approach are very well controlled, never exceeding 0.004. In addition to having better-controlled type I errors, our method also often has higher power, especially in smaller sample sizes. 
Additional simulations, where the privacy parameter $\mu$ is varied, are found in Section B.2 of the supplement.

\subsection{Logistic regression via objective perturbation
}\label{s:logistic}
In this section, we demonstrate that our methodology can be applied to non-additive privacy mechanisms as well. In particular, we derive confidence intervals for a logistic regression model using the objective perturbation mechanism \citep{chaudhuri2008privacy}, via the formulation in \citep{awan2021structure}, which adds noise to the gradient of the log-likelihood before optimizing, resulting in a non-additive privacy noise. However, since we assume that the predictor variables also need to be protected, we privatize their first two moments via the optimal $K$-norm mechanism \citep{hardt2010geometry,awan2021structure}, a multivariate additive noise mechanism. Details for both mechanisms are in Section B.3 of the supplement. { We compare the performance of our confidence intervals against those developed in \citet{wang2019differentially}, which use a custom privacy mechanism, and a combination of asymptotics and the parametric bootstrap.}

We assume that the predictor variable $x$ is naturally bounded in some known interval, and normalized to take values in $[-1,1]$. We model $x$ in terms of the beta distribution: $x_i=2z_i-1$, where $z_i\iid \mathrm{Beta}(a^*,b^*)$. Then $y|x$ comes from a logistic regression: $y_i|x_i\sim \mathrm{Bern}(\mathrm{expit}(\beta_0^*+\beta_1^* x_i))$, 
where $\mathrm{expit}(x) = \exp(x)/(1+\exp(x))$. In this problem, the parameter is $\theta^* = (\beta_0^*,\beta_1^*,a^*,b^*)$, and we are interested in producing a confidence interval for $\beta_1^*$; { the other parameters are viewed as nuisance parameters}.

To set up the generating equation, we use inverse transform sampling for $z_i$: $z_i = F^{-1}_{a^*,b^*}(u^z_i)$, where $F_{a^*,b^*}^{-1}$ is the quantile function of $\mathrm{Beta}(a^*,b^*)$, and $u^z_i \iid U(0,1)$; similarly, we generate $y_i = I(u^y_i\leq \mathrm{expit}(\beta_0^*+\beta_1^*x_i))$, where $u^y_i \iid U(0,1)$. The privatized output $s$ consists of estimates of $(\beta_0^*,\beta_1^*)$ from objective perturbation, as well as noisy estimates of the first two moments of the $z_i$'s, privatized by a $K$-norm mechanism. Since the only source of randomness in the two privacy mechanisms are independent variables from $K$-norm distributions, we use the $K$-norm random variables as the ``seeds'' for the privacy mechanisms. See Section B.2 of the supplement for details about objective perturbation and $K$-norm mechanisms. 

In the simulation generating the results in Figure \ref{fig:logistic}, we let $a^*=b^*=0.5$, $\beta_0^*=0.5$, $\beta_1^*=2$, $R=200$, $\alpha=0.05$, $n=100,$ 200, 500, 1000, 2000, and $\ep=0.1$, 0.3, 1, 3, 10 in $\ep$-DP. Other details for this experiment are found in Section B.3 of the supplement.

\begin{figure}
    \begin{minipage}[t]{.62\textwidth}
        \centering
        \includegraphics[width=\textwidth]{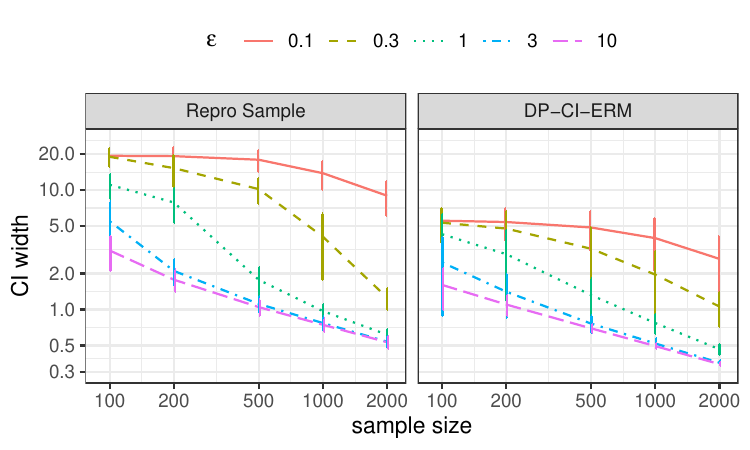}
    \end{minipage}
    \hfill
    \begin{minipage}[t]{.38\textwidth}
        \centering
        \includegraphics[width=\textwidth]{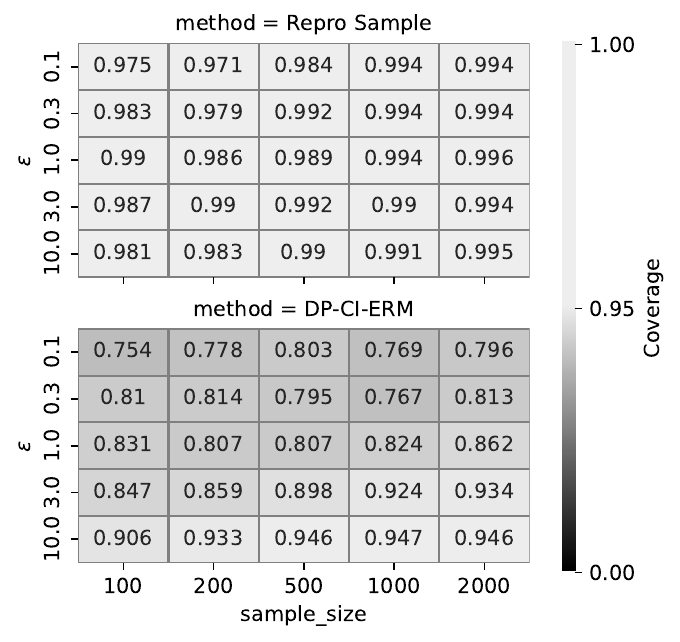}
    \end{minipage}  
    \caption{Width and coverage for the confidence intervals of  $\beta_1$ in logistic regression with repro and DP-CI-ERM \citep{wang2019differentially}. Parameters for this simulation are $a^*=b^*=0.5$, $\beta_0^*=0.5$, $\beta_1^*=2$, $R=200$, $\alpha=0.05$, and the results were averaged over 1000 replicates.}
    \label{fig:logistic}
\end{figure}

In Figure \ref{fig:logistic}, we compare the width and coverage of our simulation-based confidence intervals using Mahalanobis depth, against an alternative method proposed by \citet{wang2019differentially}, which develops a joint privacy mechanism and confidence interval algorithm for empirical risk minimization problems, and which is also based on objective perturbation (we refer to this method as DP-CI-ERM). Some key differences between our simulation-based approach and DP-CI-ERM are 1) our approach allows for arbitrary privacy mechanisms, whereas DP-CI-ERM requires one to use a specific privacy mechanism, 2) currently our method requires a fully parametric model, whereas DP-CI-ERM only needs the empirical risk function, and 3) our method gives finite sample coverage guarantees, whereas DP-CI-ERM gives an asymptotic coverage guarantee. Because of these differences, this comparison is somewhat apples-to-oranges, but is still instructive to understand the pros and cons of the repro methodology. We see in Figure \ref{fig:logistic} that DP-CI-ERM generally has smaller confidence interval width compared to repro, considering a variety of sample sizes and privacy budgets -- roughly, the widths for repro are about 2-4 times as wide as those for DP-CI-ERM. However, while the coverage of DP-CI-ERM is very close to 0.95 at large sample sizes or large $\ep$ values, we see that it has significant under coverage in several settings of $n$ and $\epsilon$. On the other hand, we see that the simulation-based confidence intervals always have greater than 95\% coverage.

\section{Discussion}\label{s:discussion}
We have shown that simulation-based inference is a powerful technique that enables finite-sample frequentist confidence intervals and hypothesis tests on privatized data. We have shown through several examples that our methodology offers more reliable, and sometimes even more powerful inference compared to existing methods, such as the parametric bootstrap. We have also improved the original repro methodology to account for Monte Carlo errors, and emphasize that while our results were motivated by problems in differential privacy, our inference framework is also applicable to models outside of privacy.

A limitation of our simulation-based methodology is that it is generally not guaranteed that the resulting confidence set will be connected, and due to this limitation, { the confidence interval found by Algorithm \ref{alg:CI} may have inflated width.} Future researchers may investigate necessary and sufficient conditions for the repro confidence sets to be connected as well as { other techniques to reduce the width of the simulation-based confidence intervals}. 

{ In our simulations, we found that the parametric bootstrap method of \citet{ferrando2022parametric} often produced unacceptable coverage/type I errors. Based on \citet{beran1997diagnosing}, we know that bootstrap consistency typically requires \emph{asymptotic equivariance} of the estimator, which itself requires the estimator to be consistent and asymptotically normal. Due to clamping, many DP estimators are biased (and generally inconsistent when the clamping bounds are fixed for all $n$), and this is likely the cause for the failure of the parametric bootstrap in these settings. Thus, it may be possible to improve the performance of the parametric bootstrap for privatized data by deriving estimators with lower bias. Nevertheless, at best the parametric bootstrap gives an asymptotic approximation while our framework has finite-sample guarantees.}


\subsection{Choice of test statistic}\label{s:cost}
The ideal test statistic for use in repro would be a pivot, whose formula and sampling distribution do not depend on the nuisance parameters, as this avoids the issue of over-coverage as well as reduces the computation burden of optimizing over the nuisance parameters. However, finding a pivot is not always possible. Especially in the setting of differential privacy, it is very challenging to design test statistics whose distribution does not depend on nuisance parameters (e.g., \citet{gaboardi2016differentially,du2020differentially,awan2023canonical,alabi2022hypothesis}). As \citet{xie2022repro} suggested, the goal is then to find an approximate pivot, such as one whose asymptotic distribution does not depend on the nuisance parameters. While classical statistical methods suggest plugging in an estimator for the nuisance parameters to approximate the sampling distribution using asymptotic theory, there is no finite sample guarantee that this will give accurate coverage/type I error rates. On the other hand, using repro samples, we can still get the benefit of using an approximate pivot, while still ensuring coverage/type I error rates. See Section B.4 of the supplement for an example where we construct a custom test statistic and demonstrate the improved coverage compared to the depth statistic. A problem for future work would be to develop general strategies to construct approximate pivots from DP statistics, which can both reduce the over-coverage and optimize the test statistic for the model and mechanism at hand.

On the other hand, most of the examples considered in this paper used a depth statistic as the default test statistic in repro. In this section, we quantify the suboptimality of this approach in terms of increased width/decreased power. There are two main issues at play: 1) The depth statistic may generally be a sub-optimal test statistic and 2) Because we are implicitly obtaining a simultaneous confidence set for all $d$ parameters, assuming that the marginal coverages of each projected confidence set are roughly equal, we would expect each coverage to be $(1-\alpha)^{1/d}$, resulting in an increased width.

 While there is often a loss in power from using a default test statistic, such as a depth statistic, it is difficult to quantify the suboptimality, as this is problem specific.


\begin{table}[t]
    \centering
    \begin{tabular}{c|cccccc}
        $d$ &1& 2&5&10&100&1000 \\\hline
        Relative width &1& 1.14&1.31&1.43&1.77&2.07
    \end{tabular}
    \caption{Relative width due to over-coverage for the normal mean with known variance, when the nominal level is $1-\alpha=0.95$, and the over-coverage level is $1-\alpha^* = (1-\alpha)^{1/d}$. }
    \label{tab:overCoverage}
\end{table}

To understand the cost of over-coverage, we consider as an idealized setting the widths of confidence intervals on normal data -- this is a useful example since many statistics are asymptotically normal. 
We explore the increased width due to over-coverage when $\alpha=0.05$ and $d$ increases in Table \ref{tab:overCoverage}. We see that having a single nuisance parameter increases the width by 14\%, but that as $d$ increases, the increase in the width is very slow, requiring $d=1000$ before we have about twice the width. So, while over-coverage does increase the width of our intervals, the increased width is often within a factor of 1.5-2, even for a large number of nuisance parameters. For a fixed nominal level $\alpha$, the relative width of the normal confidence interval grows proportionally to $\sqrt{\log(d)}$, a very slow rate (see Proposition A.4 in the supplement).

{
\subsection{Non-parametric inference}
As a simulation-based inference method, repro requires the function $G$ to generate $s$ using $s=G(\theta, u)$.  While the previous examples shown in the paper have a parametric model for the sensitive dataset $D$, there are also examples where the generation of $s$ does not require a parametric model of $D$.  

For example, to build DP confidence intervals for the median of a dataset following an unknown distribution, it is possible to design an $s$ which can be simulated directly without specifying a model for $D$. Given a dataset of real-values, $D=(x_1, \ldots, x_n)$ where $x_i$ are i.i.d. from an unknown distribution $F$, we define $s = \sum_{i=1}^n I(\{x_i \leq \mathrm{median}(F) \})$ and simulate $s$ using $s \sim \mathrm{Binom}(n, 0.5)$, which is proposed by \citet[Example 2(A)]{xie2022repro} in making inference about the population quantile $\theta_0 = F^{-1}(0.5)$. If we follow this idea to use repro to build DP confidence intervals for the median, we could potentially obtain a method similar to the \texttt{CDFPostProcess} method by \citet{drechsler2022nonparametric}, by adding noise to $s$ on a grid of candidate values for $\mathrm{median}(F)$ (note though that \citet{drechsler2022nonparametric} derived their result without considering the repro framework). 

Similar to building DP confidence interval for the median, we can use repro in conducting DP non-parametric hypothesis tests such as the Mann-Whitney test. In our supplementary materials Section B.5, we explain how to use repro in DP hypothesis test based on the DP version of Mann-Whitney test statistic, and we compare repro to two state of the art methods, \citep{couch2019differentially} and \citep{kazan2023test}. We show through simulations that our method has a better or comparable power to their methods. 

We leave it as open problems how to construct such summaries which have a parametric form, especially in privatized data settings, as well as the development of other techniques when no such parametric summary is available.
}



{
\bibliographystyle{plainnat} 
\bibliography{bibliography.bib} }

\appendix

\section{Proofs and technical details}

In this section, we present the proofs and technical details needed to support the results in the main paper.

Results similar to Lemmas \ref{lem:OrderInterval} and \ref{lem:domUnif} have been used in conformal prediction to guarantee coverage rates for the predictions. We include proofs of the lemmas for completeness. 

\begin{lemma}\label{lem:OrderInterval}
Let $x_1,\ldots, x_n$ be an exchangeable sequence of real-valued random variables. Then $P(x_{(a)}\leq x_n)\geq \frac{n-a+1}{n}$, $P(x_n\leq x_{(b)})\geq \frac{b}{n}$, and $P(x_{(a)}\leq x_n\leq x_{(b)})\geq \frac{(b-a+1)}{n}$, where $x_{(a)}$ and $x_{(b)}$ are the $a^{th}$   and $b^{th}$ order statistics of $x_1,\ldots, x_n$. 
Furthermore, for $a_\alpha = \lfloor \alpha n \rfloor+1$, $P(x_{(a_\alpha)}\leq x_n)\geq 1-\alpha$, and for $b_{\alpha} = \lceil (1-\alpha)n\rceil$, $P(x_n\leq x_{(b_\alpha)})\geq 1-\alpha$. 
For any $a=1,2,\ldots, n-\lceil(1-\alpha)n\rceil+1$, setting $b=a+\lceil (1-\alpha)n\rceil -1$ ensures that $P(x_{(a)}\leq x_n\leq x_{(b)})\geq 1-\alpha$. 
 \end{lemma}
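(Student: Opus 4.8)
The plan is to run the standard exchangeability/rank argument familiar from conformal prediction, taking care of ties. The governing quantity is the rank of $x_n$ within the sample. For each index $j$, set $N^{(j)}_{\leq} = \#\{i : x_i \leq x_j\}$. First I would record the purely deterministic fact that, for any real numbers $x_1,\dots,x_n$ and any fixed $a\in\{1,\dots,n\}$, one has $N^{(j)}_{\leq} \geq a$ if and only if $x_j \geq x_{(a)}$; hence $\#\{j : N^{(j)}_{\leq} \geq a\} \geq n-a+1$, because if $x_{\pi(1)}\leq\cdots\leq x_{\pi(n)}$ is a sorted relabeling, then $\pi(a),\dots,\pi(n)$ are $n-a+1$ distinct indices with $x_j\geq x_{(a)}$. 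Taking indicators, $\sum_{j=1}^n I(N^{(j)}_{\leq}\geq a) \geq n-a+1$ holds surely; taking expectations and using that exchangeability forces all $P(N^{(j)}_{\leq}\geq a)$ to coincide gives $n\,P(N^{(n)}_{\leq}\geq a) \geq n-a+1$. Since $\{N^{(n)}_{\leq}\geq a\}=\{x_{(a)}\leq x_n\}$, this is exactly $P(x_{(a)}\leq x_n)\geq \tfrac{n-a+1}{n}$.

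The second inequality is the mirror image: with $N^{(j)}_{\geq}=\#\{i:x_i\geq x_j\}$ one checks $N^{(j)}_{\geq}\geq n-b+1 \iff x_j\leq x_{(b)}$, so $\#\{j:N^{(j)}_{\geq}\geq n-b+1\}\geq b$, and the same averaging step yields $P(x_n\leq x_{(b)})\geq \tfrac{b}{n}$. For the two-sided bound I would just union-bound the complementary events: $P(x_n<x_{(a)}) = 1-P(x_{(a)}\leq x_n) \leq \tfrac{a-1}{n}$ and $P(x_n>x_{(b)}) \leq \tfrac{n-b}{n}$, so $P(x_{(a)}\leq x_n\leq x_{(b)}) \geq 1-\tfrac{a-1}{n}-\tfrac{n-b}{n} = \tfrac{b-a+1}{n}$.

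The ``furthermore'' assertions are then elementary arithmetic on the floors and ceilings: for $a_\alpha=\lfloor\alpha n\rfloor+1$, $\tfrac{n-a_\alpha+1}{n}=\tfrac{n-\lfloor\alpha n\rfloor}{n}\geq\tfrac{n-\alpha n}{n}=1-\alpha$; for $b_\alpha=\lceil(1-\alpha)n\rceil$, $\tfrac{b_\alpha}{n}\geq\tfrac{(1-\alpha)n}{n}=1-\alpha$; and for $1\leq a\leq n-\lceil(1-\alpha)n\rceil+1$ with $b=a+\lceil(1-\alpha)n\rceil-1$ (so $b\leq n$), the two-sided bound gives $\tfrac{b-a+1}{n}=\tfrac{\lceil(1-\alpha)n\rceil}{n}\geq 1-\alpha$.

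The only genuine subtlety — the step I would be most careful about — is the treatment of ties in the equivalence $\{N^{(j)}_{\leq}\geq a\}=\{x_j\geq x_{(a)}\}$ and its companion, and in the resulting counting inequalities. With distinct values the rank of $x_n$ is exactly uniform on $\{1,\dots,n\}$ and every inequality above becomes an equality; with ties one only obtains the stated one-sided bounds, which is precisely why the conclusions are phrased with ``$\geq$''. Everything else is bookkeeping.
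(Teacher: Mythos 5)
Your proof is correct and follows essentially the same route as the paper's: both rest on the exchangeability symmetry of $x_n$'s rank among the $n$ variables, with the paper phrasing it as "$x_n$ is equally likely to occupy any position" and you formalizing the same idea via the deterministic count $\sum_j I(N^{(j)}_{\leq}\geq a)\geq n-a+1$ plus averaging. Your version handles ties more explicitly than the paper's one-line remark, and your union bound for the two-sided case gives the same $(b-a+1)/n$ as the paper's direct position count.
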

\begin{proof}
Notice that $x_n$ is equally likely to take any of the $n$ positions among the order statistics. In the first case, there are $a-1$ positions not in the interval (assuming no ties), so there are $n-a+1$ positions that make the probability true. In the second case, there are $b$ positions that make the probability true, and in the third case, there are  $(b-a+1)$ positions in the interval. If there are ties at the boundaries, then each of these counts only increases. 

For $a_\alpha$ and $b_\alpha$ defined above, note that 
$P(x_{(a_\alpha)}\leq x_n)\geq \frac{n-(a_\alpha-1)}{n}= \frac{n-\lfloor \alpha n \rfloor}{n}\geq 1-\alpha,$
and 
$P(x_n\leq x_{(b_\alpha)})\geq \frac{b_\alpha}{n}=\frac{\lceil (1-\alpha)n\rceil}{n}\geq 1-\alpha.$ 
Finally, let $a$ and $b$ be any values such that $b-a=\lceil(1-\alpha)n\rceil -1$. Then by the earlier result, we have
$P(x_{(a)}\leq x_n\leq x_{(b)})\geq (b-a+1)/n=\lceil(1-\alpha)n\rceil/n\geq (1-\alpha).$\qedhere
\end{proof}

\begin{lemma}\label{lem:domUnif}
Let $ x_1,\ldots, x_n$ be an exchangeable sequence of real-valued random variables. Then $p = n^{-1} \#\{x_i\leq x_n\}$ satisfies 
$P(p\leq \alpha)\leq \alpha$. 
\end{lemma}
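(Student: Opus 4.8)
The statement to prove is Lemma~\ref{lem:domUnif}: for an exchangeable sequence $x_1,\ldots,x_n$, the quantity $p = n^{-1}\#\{x_i \leq x_n\}$ satisfies $P(p \leq \alpha) \leq \alpha$.

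\textbf{Approach.} The plan is to reduce the claim to the rank of $x_n$ among $x_1,\ldots,x_n$ and exploit exchangeability. First I would observe that $np = \#\{i : x_i \leq x_n\}$ is exactly the rank of $x_n$ (counting $x_n$ itself, and counting all ties in the favorable direction), so $np$ takes values in $\{1,2,\ldots,n\}$. The event $\{p \leq \alpha\}$ is then $\{np \leq n\alpha\} = \{np \leq \lfloor n\alpha\rfloor\}$ since $np$ is integer-valued. So it suffices to show $P(np \leq k) \leq k/n$ for every integer $k$, and then specialize to $k = \lfloor n\alpha \rfloor \leq n\alpha$.

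\textbf{Key steps.} The heart of the argument is a symmetry/counting step essentially identical to the first display in the proof of Lemma~\ref{lem:OrderInterval}: by exchangeability, $x_n$ is equally likely to occupy any of the $n$ positions in the sorted order, so in the no-ties case $P(np = j) = 1/n$ for each $j = 1,\ldots,n$, giving $P(np \leq k) = k/n$. To handle ties cleanly without a case analysis, I would instead argue via the full symmetric group: for any permutation $\pi$ of $\{1,\ldots,n\}$, define $p_\pi = n^{-1}\#\{i : x_i \leq x_{\pi(n)}\}$, i.e.\ the analogous statistic using $x_{\pi(n)}$ as the ``last'' observation. By exchangeability, $p_\pi \overset{d}{=} p$ for every $\pi$; equivalently, averaging over a uniformly random $\pi$, $P(p \leq \alpha) = \frac{1}{n!}\sum_\pi P(p_\pi \leq \alpha)$. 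Now fix the realized values $x_1,\ldots,x_n$ and count, over the $n$ choices of which index plays the role of ``last'': the number of indices $j$ for which $\#\{i : x_i \leq x_j\} \leq k$ is at most $k$, because if we sort the values, only the $k$ smallest (with ties broken by index, say) can have at most $k$ entries weakly below them — more precisely, $\#\{j : \#\{i: x_i \le x_j\} \le k\} \le k$ holds deterministically, since the $j$'s achieving this must all have $x_j$ among the $k$ smallest order statistics. Hence conditionally on the multiset of values, the fraction of ``last-index'' choices landing in $\{p \leq k/n\}$ is at most $k/n$, and taking expectations gives $P(np \leq k) \leq k/n$.

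\textbf{Main obstacle.} The only subtlety — and the reason I would route through the permutation-averaging rather than quoting Lemma~\ref{lem:OrderInterval} directly — is the handling of ties: with ties, $np$ need not be uniform on $\{1,\ldots,n\}$ and in fact $np$ can be stochastically \emph{larger} than uniform (ties inflate the count $\#\{x_i \le x_n\}$), which is exactly the direction that helps us, since it makes $\{p \le \alpha\}$ \emph{less} likely. The clean way to see this is the deterministic counting bound $\#\{j : \#\{i: x_i \le x_j\} \le k\} \le k$, which I would verify in one or two lines by considering the sorted values. Once that bound is in hand, the rest is a routine conditional-expectation argument. Finally I would conclude by plugging in $k = \lfloor n\alpha\rfloor$: $P(p \le \alpha) = P(np \le \lfloor n\alpha\rfloor) \le \lfloor n\alpha\rfloor/n \le \alpha$.
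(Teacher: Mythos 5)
Your proof is correct. The underlying symmetry idea is the same as the paper's — under exchangeability, $x_n$ is equally likely to play any of the $n$ ``positions,'' and ties can only push the count $\#\{x_i\le x_n\}$ upward, which is the favorable direction — but your execution is genuinely different. The paper reduces the event $\{\#\{x_i\le x_n\}\le k\}$ to the order-statistic event $\{x_n< x_{(k+1)}\}$ and invokes Lemma~\ref{lem:OrderInterval}, whose tie-handling is disposed of in a single sentence (``if there are ties at the boundaries, each of these counts only increases''). You instead condition on the realized multiset of values, average over which index plays the role of ``last'' via a uniformly random permutation (legitimate, since $p_\pi(x)=p(x_\pi)\overset{d}{=}p(x)$ by exchangeability), and close the argument with the deterministic counting bound $\#\{j:\#\{i:x_i\le x_j\}\le k\}\le k$, which holds because the largest of any $m$ such values already has at least $m$ entries weakly below it. What your route buys is a fully rigorous, self-contained treatment of ties — the one point where the paper's Lemma~\ref{lem:OrderInterval} is informal — at the cost of slightly more bookkeeping; what the paper's route buys is reuse of a lemma it needs anyway for Theorem~\ref{thm:construction}. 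Both are valid, and your final step $P(p\le\alpha)=P(np\le\lfloor n\alpha\rfloor)\le\lfloor n\alpha\rfloor/n\le\alpha$ matches the paper's conclusion (note the paper's last display writes an equality ``$=\lfloor n\alpha\rfloor/n$'' where it should be ``$\le$,'' which your version gets right).
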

\begin{proof}
First, let $k\in\{0,1,2,\ldots, n-1\}$, and consider 
\begin{align*}
    P(\#\{x_i\leq x_n\}\leq k)
    &=P(\#\{x_i\leq x_n)<k+1)\\
	&= P(x_n<x_{(k+1)})\\
    &=1-P(x_n\geq x_{(k+1)})\\
    &\leq 1-(n-(k+1)+1)/n\\
    &= k/n.
\end{align*}

where we used Lemma \ref{lem:OrderInterval} to establish the inequality. Now, setting $k=\lfloor n\alpha\rfloor$, we have that 
    \[P(p\leq \alpha) = P(\#\{x_i\leq x_n\}\leq n\alpha)
    =P(\#\{x_i\leq x_n\}\leq \lfloor n\alpha\rfloor)
    =\lfloor n\alpha\rfloor/n
    \leq \alpha.\qedhere\]
\end{proof}

\begin{proof}[Proof of Lemma 3.1.]
The fact that $\Gamma_\alpha(s,\omega)$ is a $(1-\alpha)$-confidence set is easily seen from the following: $P_{s\sim F_\theta,\omega\sim Q}(\theta\in \Gamma_\alpha(s,\omega))
=P_{s\sim F_\theta,u\sim Q}(B_\alpha(\theta;s,\omega))\geq 1-\alpha.$ 
Note that $\Gamma^\beta_\alpha(s,\omega)\times \{\eta|\exists \beta^* \text{ s.t. } (\beta^*,\eta) \in \Theta\}\supset \Gamma_\alpha(s,\omega)$. It follows that 
\[P_{s\sim F_{(\beta,\eta)},\omega\sim Q}(\beta \in \Gamma_\alpha^\beta(s,\omega))
\geq P_{s\sim F_{(\beta,\eta)},\omega\sim Q}((\beta,\eta) \in \Gamma_\alpha(s,\omega))\geq 1-\alpha.\]
To see that $\Gamma_\alpha^{\theta_1}(s,\omega),\ldots, \Gamma_{\alpha}^{\theta_k}(s,\omega)$ are simultaneous $(1-\alpha)$-confidence sets for $\theta_1,\ldots, \theta_k$, consider 
\begin{align*}
P_{s\sim F_{(\theta_1,\ldots, \theta_k)},\omega\sim Q}(\theta_1\in \Gamma_\alpha^{\theta_1}(s,\omega),\ldots, \theta_k \in \Gamma_\alpha^{\theta_k}(s,\omega))
&=P_{s\sim F_\theta,\omega \sim Q}(\theta \in \Gamma_\alpha^{\theta_1}(s,\omega)\times \cdots \times \Gamma_\alpha^{\theta_k}(s,\omega))\\
&\geq P_{s\sim F_\theta,\omega \sim Q}(\theta \in \Gamma_\alpha(s,\omega))\\
&\geq 1-\alpha,
\end{align*}
since $\Gamma_\alpha^{\theta_1}(s,\omega)\times \cdots \times \Gamma_\alpha^{\theta_k}(s,\omega)\supset \Gamma_\alpha(s,\omega)$. 
\qedhere
\end{proof}

\begin{proof}[Proof of Theorem 3.6.]
The fact that $B_\alpha(\theta;s,(u_i)_{i=1}^R)$ satisfies (1) follows from Lemma \ref{lem:OrderInterval}, since $T_\theta(s;s,s_1(\theta),\ldots, s_R(\theta)),$ $T_\theta(s_1(\theta);s,s_1(\theta),\ldots, s_R(\theta)),\ldots, T_\theta(s_R(\theta);s,s_1(\theta),\ldots,s_R(\theta))$ are exchangeable. { Setting $\omega = (u_i)_{i=1}^R$ and $Q = P^R$,} it follows from Lemma 3.1 that both $\Gamma_\alpha(s,(u_i)_{i=1}^R)$ and $\Gamma_\alpha^\beta(s,(u_i)_{i=1}^R)$ are $(1-\alpha)$-confidence sets. 
\end{proof}

{
\begin{lemma}
    \label{lem:accept}
    Suppose that the inputs to Algorithm 1 are as specified, and call $\Gamma$ the confidence set of Theorem 3.6 using $a=\lfloor \alpha(R+1)\rfloor +1$ and form (2) of Lemma 3.1. Then, Algorithm 1 returns \texttt{TRUE} if and only if $\Theta_0\cap \Gamma\neq \emptyset$. 
\end{lemma}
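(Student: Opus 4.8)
The plan is to rewrite $\Gamma$ explicitly as a ``rank set'' and then recognize Algorithm~\ref{alg:accept} as precisely the assertion ``$\Gamma$ meets $\Theta_0$'', up to a harmless continuity tweak in the objective.

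\emph{Step 1: an explicit description of $\Gamma$.} By hypothesis $\Gamma$ is produced by Theorem~\ref{thm:construction} with $a=\lfloor\alpha(R+1)\rfloor+1$ and $b=R+1$ (this is admissible: $b-a=R-\lfloor\alpha(R+1)\rfloor=\lceil(1-\alpha)(R+1)\rceil-1$), plugged into form \eqref{eq:CI} of Lemma~\ref{lem:confidence} with $\omega=(u_i)_{i=1}^R$. Hence $\theta\in\Gamma$ iff $T_{\text{obs}}(\theta)\in[T_{(a)}(\theta),T_{(R+1)}(\theta)]$. Since $T_{\text{obs}}(\theta)$ is one of the $R+1$ numbers whose order statistics are $T_{(1)}(\theta)\le\cdots\le T_{(R+1)}(\theta)$, the upper constraint holds automatically, so $\theta\in\Gamma$ iff $T_{(a)}(\theta)\le T_{\text{obs}}(\theta)$. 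Allowing for ties exactly as in the counting argument of Lemma~\ref{lem:OrderInterval}, this says that at least $a$ of the $R+1$ numbers are $\le T_{\text{obs}}(\theta)$; counting $T_{\text{obs}}(\theta)$ itself, it is equivalent to $N(\theta)\ge a$, where $N(\theta):=\#\{i\in\{1,\dots,R\}:T_i(\theta)\le T_{\text{obs}}(\theta)\}+1\in\{1,\dots,R+1\}$. Therefore
\[
\Gamma=\bigl\{\theta\in\Theta: N(\theta)\ge \lfloor\alpha(R+1)\rfloor+1\bigr\},
\]
and since $N$ is integer-valued with finitely many possible values, $\Theta_0\cap\Gamma\ne\emptyset$ iff $\sup_{\theta\in\Theta_0}N(\theta)\ge\lfloor\alpha(R+1)\rfloor+1$.

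\emph{Step 2: matching the algorithm.} Algorithm~\ref{alg:accept} returns \texttt{TRUE} iff $M:=\sup_{\theta\in\Theta_0}\bigl[N(\theta)+T_{\text{obs}}(\theta)\bigr]\ge j$, where $j:=\lfloor\alpha(R+1)\rfloor+1$ satisfies $2\le j\le R+1$ because $\alpha\in[1/(R+1),1)$. It thus suffices to show that the added term $T_{\text{obs}}(\theta)\in[0,1]$ never moves the comparison across the integer $j$, i.e.\ that $\sup_{\theta\in\Theta_0}[N(\theta)+T_{\text{obs}}(\theta)]\ge j$ iff $\sup_{\theta\in\Theta_0}N(\theta)\ge j$. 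One direction is immediate from $T_{\text{obs}}\ge0$. For the other, I would argue pointwise: at any $\theta$, if $N(\theta)+T_{\text{obs}}(\theta)\ge j$ yet $N(\theta)\le j-1$, then $T_{\text{obs}}(\theta)\ge1$, so $T_{\text{obs}}(\theta)=1$; but then $T_i(\theta)\le T_{\text{obs}}(\theta)$ for every $i$, forcing $N(\theta)=R+1\ge j$ and contradicting $N(\theta)\le j-1$. Hence $N(\theta)+T_{\text{obs}}(\theta)\ge j$ already implies $N(\theta)\ge j$ at every $\theta$; evaluating at the maximizer of $M$ gives the equivalence, and with it the lemma.

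\emph{Anticipated main obstacle.} The two delicate points are (i) the tie-aware passage from the order-statistic membership $T_{(a)}(\theta)\le T_{\text{obs}}(\theta)$ to the rank inequality $N(\theta)\ge a$, which I would justify by the same position-counting used in Lemma~\ref{lem:OrderInterval}, and (ii) verifying that the continuity term $T_{\text{obs}}(\theta)$, inserted in line~4 of Algorithm~\ref{alg:accept} only to smooth an integer-valued objective, cannot flip the decision --- this is exactly where the normalization $T_\theta\in[0,1]$ and the bound $j\le R+1$ enter. If one prefers not to assume the supremum $M$ is attained, the same conclusion holds under the mild regularity present in all our examples (e.g.\ $G(\cdot,u_i)$ and $T_\theta$ continuous in $\theta$ with $\Theta_0$ compact), which guarantees attainment.
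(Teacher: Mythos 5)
Your proof is correct and follows essentially the same route as the paper's: rewrite membership in $\Gamma$ as the rank inequality $\#\{T_i(\theta)\le T_{\text{obs}}(\theta)\}+1\ge a$ (the upper order-statistic bound being vacuous since $b=R+1$), then observe that adding $T_{\text{obs}}(\theta)\in[0,1]$ to the integer-valued objective cannot cross the threshold $a$ because $T_{\text{obs}}(\theta)=1$ forces the count to equal $R+1$. Your explicit treatment of ties and your remark about attainment of the supremum are, if anything, slightly more careful than the paper's own argument, which asserts the final ``iff with the sup'' without comment.
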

\begin{proof}
We consider alternative expressions of $\Theta_0\cap \Gamma:$ 
\begin{align*}
    \Theta_0 \cap \Gamma 
    &= \{\theta\in \Theta_0 | T_{obs}(\theta) \geq T_{(a)}(\theta)\}\\
    &=\{\theta\in \Theta_0 | \#\{T_{obs}(\theta)\geq T_{(i)}(\theta)\}\geq a\}\\
    &=\{\theta\in \Theta_0 | \#\{T_{obs}(\theta)\geq T_i(\theta)\}+1\geq a \}\\
    &=\{\theta\in \Theta_0 | \#\{T_{obs}(\theta)\geq T_i(\theta)\}+1+T_{obs}(\theta) \geq a \},
\end{align*}
where $T_{(i)}(\theta)$ is the notation from Theorem 3.6 and $T_i(\theta)$ is the notation from Algorithm 1; the final equation follows because $T_{obs}(\theta)\in [0,1]$, $a\in \ZZ$, and $T_{obs}(\theta)=1$ implies that $\#\{T_{obs}(\theta)\geq T_{(i)}(\theta)\}=R+1$. 

From the above equations, we see that $\Theta_0\cap \Gamma \neq \emptyset$ if and only if $\sup_{\theta\in \Theta_0} \#\{T_{obs}(\theta)\geq T_i(\theta)\}+1+T_{obs}(\theta) \geq a$, which is precisely when Algorithm 1 returns \texttt{TRUE}.
\end{proof}

\begin{proof}[Proof of Proposition 3.11.]
We will focus on the left portion of the confidence interval, as the argument for the right side is symmetric. Lemma \ref{lem:accept} showed that Algorithm 1 returns \texttt{TRUE} if and only if the input set has intersection with the confidence set of Theorem 3.6. For brevity, call $\Gamma^\beta$ the confidence set of Theorem 3.6 using form (3) of Lemma 3.1. 

Note that at initialization, $[\ul{\beta_L},\ol{\beta_L}]\cap \Gamma^\beta\neq \emptyset$ and $[\inf B,\ul{\beta_L}]\cap \Gamma^\beta=\emptyset$ (for the right side, we have $[\ul{\beta_U},\ol{\beta_U}]\cap \Gamma^\beta\neq \emptyset$ and $[\ol {\beta_U},\sup B]\cap \Gamma^\beta=\emptyset$). 
At each iteration of the algorithm, if $[\ul {\beta_L},\beta^*_L]\cap \Gamma^\beta \neq \emptyset$, then we set $\ol {\beta_L} = \beta^*_L$ and otherwise, we set $\ul {\beta_L}=\beta^*_L$. We see that after every iteration, we maintain that $[\ul {\beta_L},\ol {\beta_L}]\cap \Gamma^\beta\neq \emptyset$ and $[\inf B, \ul {\beta_L}] \cap \Gamma^\beta=\emptyset$  (for the right side, we have $[\ul{\beta_U},\ol{\beta_U}]\cap \Gamma^\beta\neq \emptyset$ and $[\ol {\beta_U},\sup B]\cap \Gamma^\beta=\emptyset$).

We see that at any point in the procedure, we have that $[\ul {\beta_L},\ol {\beta_U}]\supset \Gamma^\beta$, so the interval has coverage $(1-\alpha)$, inherited by the properties of $\Gamma^\beta$. 

In terms of the width of the interval, note that at any point in the procedure, $\inf \Gamma^\beta \in [\ul {\beta_L},\ol {\beta_L}]$ and $\sup \Gamma^\beta \in [\ul {\beta_U},\ol {\beta_U}]$. So, when the algorithm terminates, we have the following inequalities:
\[\inf \Gamma^\beta - \ul {\beta_L} \leq \ol {\beta_L} - \ul {\beta_L} <\mathrm{tol},\]
\[\ol {\beta_U} - \sup \Gamma^\beta \leq \ol {\beta_U} - \ol {\beta_U} <\mathrm{tol}.\]
This implies the following upper bound:
\begin{align*}
    \ol {\beta_U} - \ul {\beta_L} 
    &=\sup \Gamma^\beta - \inf \Gamma^\beta + (\ol {\beta_U} - \sup \Gamma^\beta + \inf \Gamma^\beta - \ul {\beta_L})\\
    &< \sup \Gamma^\beta - \inf \Gamma^\beta + 2\mathrm{tol},
\end{align*}
establishing that the interval produced by Algorithm 2 is at most $2\mathrm{tol}$ units wider than the smallest interval containing $\Gamma^\beta$.  
\end{proof}
}

\begin{proof}[Proof of Theorem 4.1.]
For any $\theta_0\in \Theta_0$, call $p(s;\theta_0) = \frac{1}{R+1}[\#\{T_{(i)}(\theta_0)\leq T_{\text{obs}}(\theta_0)\}]$, which due to the exchangeability of the $T_{(i)}(\theta_0)$ is a valid $p$-value for $H_0: \theta^*=\theta_0$, according to Lemma \ref{lem:domUnif}. Then we can write $p=\frac{ \sup_{\theta\in \Theta_0} \#\{T_{(i)}(\theta)\leq T_{\text{obs}}(\theta)\}}{R+1}$ as 
$p = \sup_{\theta\in \Theta_0} p(s;\theta).$
Then, since $p\geq p(s;\theta^*)$, where $\theta^*\in \Theta_0$ is the true parameter value, we have 
$P_{s\sim F_{\theta^*}}(p\leq \alpha)\leq P_{s\sim F_{\theta^*}}(p(s;\theta^*)\leq \alpha)\leq \alpha,$ 
and we see that $p$ is a valid $p$-value for $H_0$. 
\end{proof}
\begin{proof}[Proof of Proposition 4.2.]
    Note that since $T$ takes values in $[0,1]$, $\lfloor M\rfloor =\sup_{\theta\in \Theta_0}  \#\{T_i(\theta)\leq T_{\text{obs}}(\theta)\}$ unless $T_{\text{obs}}(\theta)=1$. When  $T_{\text{obs}}(\theta)=1$ we have $T_{\text{obs}}(\theta)\geq T_{i}(\theta)$ for all $i=1,2,\ldots, R$, giving $\lfloor M\rfloor =R+1$, whereas  $\#\{T_i(\theta)\leq T_{\text{obs}}(\theta)\}=R$. We conclude that $ \min\{\lfloor M\rfloor+1,R+1\}=\sup_{\theta\in \Theta_0}  \#\{T_i(\theta)\leq T_{\text{obs}}(\theta)\}+1=\sup_{\theta\in \Theta_0}  \#\{T_{(i)}(\theta)\leq T_{\text{obs}}(\theta)\}$.
We see that $p$, the output of $\texttt{pvalue}(\Theta_0,(u_i)_{i=1}^R,G,s,T)$ is equal to $\frac{ \sup_{\theta\in \Theta_0} \#\{T_{(i)}(\theta)\leq T_{\text{obs}}(\theta)\}}{R+1}$, as desired.
\end{proof}

\begin{prop}\label{prop:rate}
Let $\alpha \in (0,1)$. Then  $\sqrt{\log(1/(1-(1-\alpha)^{1/d}))}\sim \sqrt{\log(d)}$, 
as $d\rightarrow \infty$.
\end{prop}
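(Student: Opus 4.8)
The plan is to show the asymptotic equivalence by analyzing the inner quantity $1-(1-\alpha)^{1/d}$ as $d\to\infty$. First I would write $(1-\alpha)^{1/d} = \exp\!\big(\tfrac{1}{d}\log(1-\alpha)\big)$ and note that $\log(1-\alpha)$ is a negative constant (call it $-c$ with $c>0$ since $\alpha\in(0,1)$). Then a first-order Taylor expansion of the exponential gives $1-(1-\alpha)^{1/d} = 1 - \exp(-c/d) = \tfrac{c}{d} + O(1/d^2)$, so that $1-(1-\alpha)^{1/d}\sim c/d$ as $d\to\infty$. In particular $1/\big(1-(1-\alpha)^{1/d}\big)\sim d/c$.

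Next I would take logarithms: $\log\!\big(1/(1-(1-\alpha)^{1/d})\big) = \log(d/c) + o(1) = \log d - \log c + o(1)$. Since $\log c$ is a fixed constant, this is $\log d \,(1 + o(1))$ as $d\to\infty$. Taking square roots (which is continuous and monotone, and preserves asymptotic equivalence of positive quantities tending to infinity), we get $\sqrt{\log\!\big(1/(1-(1-\alpha)^{1/d})\big)} = \sqrt{\log d}\,\sqrt{1+o(1)} \sim \sqrt{\log d}$, which is the claim.

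The argument is essentially a routine chain of Taylor expansion, logarithm, and square root, so there is no serious obstacle; the only point requiring a little care is making sure the $o(1)$ and $O(\cdot)$ terms are handled correctly when passing through $\log$ and $\sqrt{\cdot}$ — specifically, that an additive bounded perturbation inside the logarithm becomes asymptotically negligible relative to $\log d$, and that $\sqrt{\cdot}$ preserves the relation $f\sim g$ for positive $f,g\to\infty$. Both are standard: if $f(d)/g(d)\to 1$ with $f,g>0$ then $\sqrt{f(d)}/\sqrt{g(d)} = \sqrt{f(d)/g(d)}\to 1$. I would state these two facts explicitly and then the proof is complete in a few lines.

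\begin{proof}[Proof of Proposition A.4]
Since $\alpha\in(0,1)$, we have $\log(1-\alpha)<0$; write $c = -\log(1-\alpha)>0$. Then $(1-\alpha)^{1/d} = \exp(-c/d)$, so by the Taylor expansion of the exponential,
\[
1-(1-\alpha)^{1/d} = 1-\exp(-c/d) = \frac{c}{d} + O\!\left(\frac{1}{d^2}\right),
\]
and hence $1-(1-\alpha)^{1/d}\sim c/d$ as $d\to\infty$. Consequently
\[
\log\!\left(\frac{1}{1-(1-\alpha)^{1/d}}\right) = \log\!\left(\frac{d}{c}\right) + o(1) = \log d - \log c + o(1) = (\log d)\,\big(1+o(1)\big),
\]
using that $\log c$ is a constant and $\log d\to\infty$. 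Since for positive sequences $f(d)\to\infty$ and $g(d)\to\infty$ with $f(d)/g(d)\to 1$ one has $\sqrt{f(d)}/\sqrt{g(d)} = \sqrt{f(d)/g(d)}\to 1$, applying this with $f(d) = \log\!\big(1/(1-(1-\alpha)^{1/d})\big)$ and $g(d) = \log d$ yields
\[
\sqrt{\log\!\left(\frac{1}{1-(1-\alpha)^{1/d}}\right)} \sim \sqrt{\log d}, \qquad d\to\infty,
\]
as claimed.
\end{proof}
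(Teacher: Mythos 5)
Your proof is correct, and it takes a genuinely different route from the paper's. The paper establishes $\lim_{d\to\infty} \frac{-\log(1-(1-\alpha)^{1/d})}{\log d}=1$ by treating $d$ as a continuous variable and applying L'H\^opital's rule twice to the resulting indeterminate forms, then passes the limit through the square root by continuity. You instead expand the inner quantity directly: writing $c=-\log(1-\alpha)>0$, the Taylor expansion $1-e^{-c/d}=c/d+O(1/d^2)$ gives $1-(1-\alpha)^{1/d}\sim c/d$, whence the logarithm is $\log d-\log c+O(1/d)=(\log d)(1+o(1))$, and the square root preserves asymptotic equivalence of positive quantities. Your argument is more elementary (no differentiation of the expression in $d$ is needed) and is arguably more informative, since it exhibits the exact second-order term $-\log c=-\log(-\log(1-\alpha))$ in the expansion of the logarithm, whereas the L'H\^opital computation only certifies the leading-order ratio. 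Both proofs conclude with the same observation that $f\sim g$ with $f,g>0$ implies $\sqrt f\sim\sqrt g$; your explicit statement of this fact (via $\sqrt{f/g}\to 1$) is a slightly cleaner justification than the paper's appeal to continuity of $\sqrt{\cdot}$ at $1$.
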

\begin{proof}
Our goal is to show that 
$\lim_{d\rightarrow \infty} \frac{\sqrt{(-1)\log(1-(1-\alpha)^{1/d})}}{ \sqrt{\log(d)}}=1.$ 
First, we will evaluate the limit without the square root:
\begin{align*}
\lim_{d\rightarrow \infty} \frac{(-1)\log(1-(1-\alpha)^{1/d})}{\log(d)}
&\overset{\text{L'H}}=\lim_{d\rightarrow \infty} \frac{d(-1) \log(1-\alpha) (1-\alpha)^{1/d}}{d^2(1-(1-\alpha)^{1/d})}\\
&\overset{\phantom{\text{L'H}}}=\lim_{d\rightarrow \infty} \frac{(-1) \log(1-\alpha) (1-\alpha)^{1/d}(1/d)}{1-(1-\alpha)^{1/d}}\\
&\overset{\text{L'H}}{=} \lim_{d\rightarrow \infty} \frac{\log(1-\alpha)(1-\alpha)^{1/d} (\log(1-\alpha)+d)d^2}{d^3\log(1-\alpha)(1-\alpha)^{1/d}} \\
&\overset{\phantom{\text{L'H}}}=\lim_{d\rightarrow\infty} \frac{\log(1-\alpha)+d}{d}\\
&=1,
\end{align*}
where ``$\overset {\text{L'H}}=$'' indicates that we used L'H\^opital's rule. Finally, since $\sqrt{\cdot}$ is a continuous function at $1$, applying $\sqrt{\cdot}$ to both sides gives the desired result.
\end{proof}

\section{Additional simulation details and results}

\subsection{Exponential distribution}\label{s:exp}
{ In this section, we a privatized summary of exponentially distributed data, which is clamped in order to ensure finite sensitivity. We show that in this case we can exactly evaluate the DP summary's sampling distribution using the inversion of characteristic functions. We then compare the performance of exact confidence intervals at different clamping thresholds using the inversion technique, our simulation-based inference method, and the parametric bootstrap.}

Suppose that $x_1,\ldots, x_n \iid \mathrm{Exp}(\mu^*)$, where $\mu^*$ is the scale/mean parameter. We assume that we have some pre-determined threshold $c$, and the data will be clamped to lie in the interval $[0,c]$. Our privatized statistic is $s= \frac{1}{n} \sum_{i=1}^n [x_i]_0^c+\frac{c}{n\ep} N$, where $N$ is a noise adding mechanism; for this example, we will assume that $N\sim \mathrm{Laplace}(0,1)$, which ensures that $s$ satisfies $\ep$-DP. Note that there are two complications to understanding the sampling distribution of $s$: the clamping and the noise addition. 

We will show that for the exponential distribution, we can derive the characteristic function for $[x_i]_0^c$, which we will call $\phi_{\mu^*}$. Then the characteristic function for $s$ is 
\[\phi_{s,\mu^*}(t) = (\phi_{\mu^*}(t/n))^n\phi_N((c/(n\ep))t),\] where $\phi_N$ is the characteristic function for $N$. Once we have $\phi_{s}$, we can use inversion formulae to evaluate the cdf and pdf. This strategy was first proposed for DP statistical analysis in \citet{awan2023canonical} in the context of privatized difference of proportions tests. 
\begin{align*}
    \phi_{\mu^*}(t)&=\EE_{x\sim \mu^*} e^{it[x]_0^c}\\
    &=\int_0^c e^{itx} \frac{1}{\mu^*} e^{-x/\mu^*}\ dx + e^{itc}P_{x\sim \mu^*}( x\geq c)\\
    &=\int_0^c \frac{1}{\mu^*} e^{x(-1/\mu^*+it)}\ dx + e^{itc}e^{-c/\mu^*}\\
    &=\frac{e^{x(-1/\mu^*+it)}}{\mu^*(-1/\mu^*+it)} \Big|_0^c+e^{itc-c/\mu^*}\\
    &=\frac{1-e^{c(it-1/\mu^*)}}{1-it\mu^*} + e^{itc-c/\mu^*}.
\end{align*}

Note that while $[x]_0^c$ is not a continuous distribution, the Laplace noise makes $s$ continuous. So, Gil-Pelaez inversion tells us that we can evaluate the cdf $F_{s,\mu^*}$ as
\[F_{s,\mu^*}(x) = \frac 12 -\frac 1\pi \int_0^\infty \frac{\mathrm{Im}(e^{-itx} \phi_{s,\mu^*}(t))}{t} \ dt,\]
which can be solved by numerical integration, such as by \texttt{integrate} command in R. We can then use the cdf to determine a level $\alpha$ set by finding the $(1-\alpha)/2$ and $1-(1-\alpha)/2$ quantiles, for example by the bisection method (relies on the fact that $F_{s,\mu^*}(x)$ is monotone in $\mu^*$ for a fixed $s$ and $x$).  Based on the general description above, we then determine which $\mu^*$ values lie in these intervals. However, we can actually skip the creation of the level $\alpha$ sets by simply checking for each $\mu^*$ whether $F_{s,\mu^*}(s)\in [(1-\alpha)/2,1-(1-\alpha)/2]$. In this case, we only need to perform two bisection method searches to determine the lower and upper confidence limits for $\mu^*$. 

Note that by using characteristic functions, the computational complexity does not depend on $n$ at all, making this approach tractable for both small and large sample sizes.  While there can potentially be some numerical instability in the integration step, we found that it was not a significant problem for the settings considered in the following simulation.

In Table \ref{tab:CI_exp}, we see the result of a simulation study, using the confidence interval procedure described above for the exponential distribution. We used the parameters $n=100$, $\ep=1$, $\mu^*=10$, $\alpha=0.95$, and varied $c$. We include the empirical coverage and average confidence interval width over 1000 replicates. The coverage is very close to the provable $0.95$ coverage level. While one may expect the width to depend on the $c$ parameter, and we did indeed find larger widths with very small and very large $c$, even with extreme values of $c$, we still had an informative confidence interval, with the width of the smallest interval (from $c=20$) being 4.906, and the width of the largest interval (from $c=100$) being 7.153. { As the inversion method is a theoretical approach following the idea of Lemma 3.1, we compare it to our simulation-based inference method using repro with Mahalanobis depth on $s$, and see that the coverage and width of these two methods are similar. We also compare these approaches to parametric bootstrap (PB) simplified $t$ confidence interval (as in Section 5.2) where PB fails to provide enough coverage when $c=10$ and $c=20$.}

\begin{table}[t]
    \centering
    \begin{tabular}{ll|llll}
       &  & $c=10$&$c=20$&$c=50$&$c=100$ \\\hline
    &      Coverage&0.936 (0.008)&0.935 (0.008)&0.938 (0.008)&0.955 (0.007)\\
Inversion    &     Average Width&6.066 (0.041)&4.919 (0.023)&5.030 (0.014)&7.144 (0.011)\\\hline
    &     { Coverage} & { 0.944 (0.007)}&{ 0.942 (0.007)}&{ 0.949 (0.007)}&{ 0.955 (0.007)}\\
{ Repro Sample}    &     { Average Width}&{ 6.071 (0.044)}&{ 4.956 (0.025)}&{ 5.120 (0.019)}&{ 7.307 (0.024)}\\\hline
    &      Coverage& 0.005 (0.002)&0.785 (0.013)&0.947 (0.007)&0.957 (0.006)\\
 PB   &     Average Width&1.474 (0.001)&2.696 (0.004)&4.692 (0.011)&7.007 (0.014)
    \end{tabular}
    \caption{95\% confidence intervals for clamped exponential distribution with Laplace noise. $n=100$, $\ep=1$, $\mu^*=10$, number of replicates $1000$. Top row is based on the inversion of characteristic functions, { middle is based on the simulation-based inference of repro using Mahalanobis depth, and} bottom is from the parametric bootstrap (simplified-$t$ intervals).}
    \label{tab:CI_exp}
\end{table}

\subsection{More simulations with simple linear regression hypothesis testing}\label{s:LR_HT2}
We show more simulation results in Figure \ref{fig:Repro_LR_HT} and \ref{fig:PB_LR_HT}, where we vary the privacy parameter $\mu$. We can see that when $\mu\geq0.5$ for $\mu$-GDP and sample size $n=5000$, the type I error of the parametric bootstrap method \citep{alabi2022hypothesis} is higher than the significance level $\alpha=0.05$, while the repro sample method always controls the type I error well. Although the power of the two methods is comparable, the repro sample method follows our intuition that when $\beta_1$ is further away from 0, the power is larger, for all settings of sample size and privacy requirement, while the parametric bootstrap method does not.  
\begin{figure}[!htbp]
\centering
    \begin{minipage}{\textwidth}
        \centering
        \includegraphics[width=.99\linewidth]{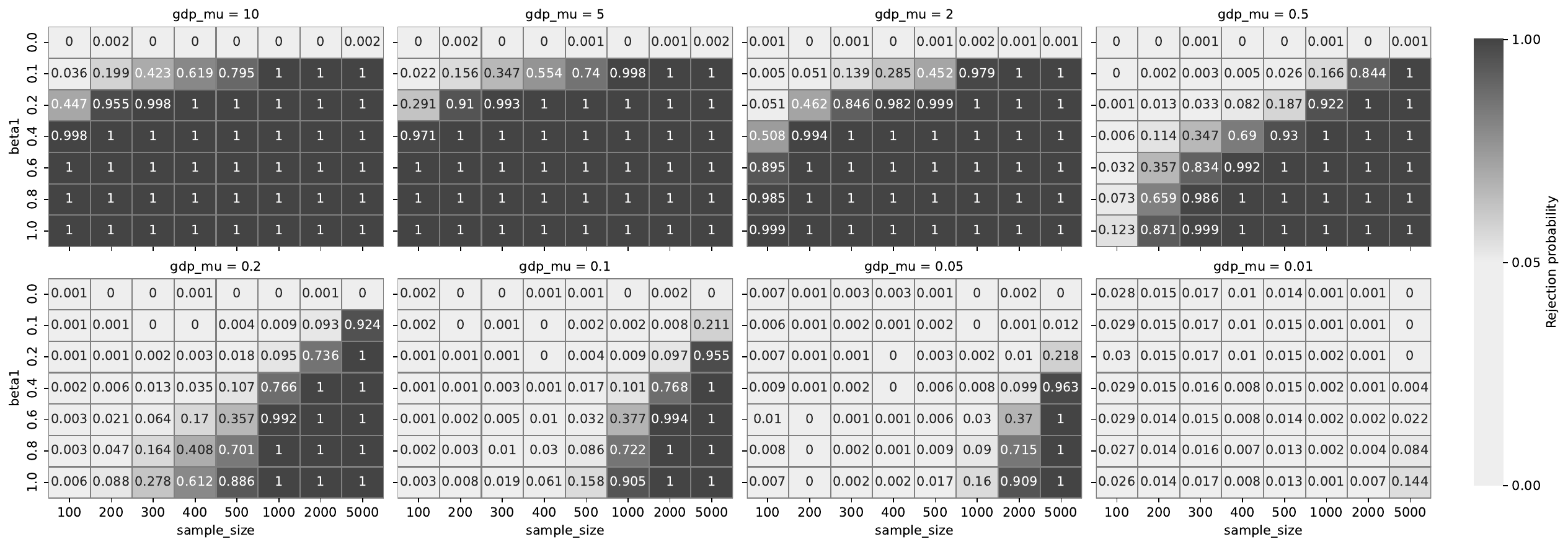}
        \caption{The rejection probability for hypothesis testing on $H_0:\beta_1^*=0$ and $H_1:\beta_1^*\neq 0$ in a linear regression model $Y=\beta_0^* +X\beta_1^* +  \epsilon$ with simulation-based repro sample method. We compare the results with different privacy settings of $\mu$-GDP: $\mu=10$, 5, 2, 0.5, 0.2, 0.1, 0.05, 0.01. (Figure 4 of Section 5.3 shows the result of $\mu=1$.) The significance level is 0.05, and the values in the table are calculated from 1000 replicates.}
        \label{fig:Repro_LR_HT}
    \end{minipage}
    \begin{minipage}{\textwidth}
        \centering
        \includegraphics[width=.99\linewidth]{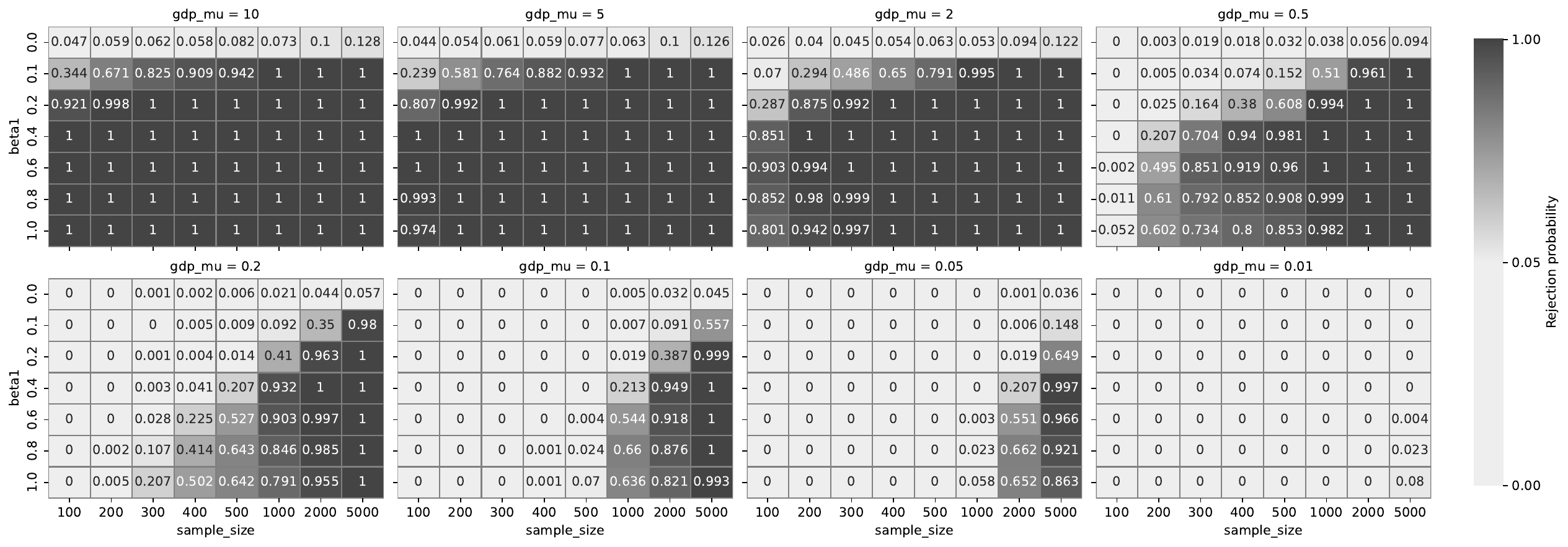}
        \caption{The rejection probability for hypothesis testing on $H_0:\beta_1^*=0$ and $H_1:\beta_1^*\neq 0$ in a linear regression model $Y=\beta_0^* +X\beta_1^* +  \epsilon$ with parametric bootstrap \citep{alabi2022hypothesis}. We compare the results with different privacy settings of $\mu$-GDP: $\mu=10$, 5, 2, 0.5, 0.2, 0.1, 0.05, 0.01. (Figure 4 of Section 5.3 shows the result of $\mu=1$.) The significance level is 0.05, and the values in the table are calculated from 1000 replicates.}
        \label{fig:PB_LR_HT}
    \end{minipage}
\end{figure}

\subsection{Details for logistic regression simulation}\label{s:logistic2}
We assume that the space of datasets is of the form $\mscr D = \mscr X^n$, and that the adjacency metric $d$ is the Hamming distance. For the logistic regression application in Section 5.4, we use techniques from \citet{awan2021structure}, including the sensitivity space, $K$-norm mechanisms, and objective perturbation. In order to be self-contained, we include the basic definitions and algorithms in this section. 

\begin{defn}[Sensitivity Space: \citealp{awan2021structure}]\label{AdjacentOutput}
Let $T: \mscr X^n\rightarrow \RR^m$ be any function. The \emph{sensitivity space} of $T$ is 
\[S_T = \l\{ u \in \RR^m \middle| \begin{array}{c}\exists X,X'\in \mscr X^n \text{ s.t } d(X,X')=1\\ \text{and }u=T(X)-T(X')\end{array}\r\}.\]
\end{defn}

A set $K\subset \RR^m$ is a \emph{norm ball} if $K$ is 
\begin{inparaenum}[1)]\item convex, \item bounded,  \item absorbing: $\forall u \in \RR^m$, $\exists c>0$ such that $u\in cK$, and \item
 symmetric about zero: if $u\in K$, then $-u\in K$. \end{inparaenum} 
If $K\subset \RR^m$ is a norm ball, then $\lVert u \rVert_K = \inf \{c\in \RR^{\geq 0}\mid u\in cK\}$ is a norm on $\RR^m$. 

\begin{algorithm}
\caption{Sampling  $\propto \exp(-c \lVert V\rVert_\infty)$   \citep{steinke2016between}}
\scriptsize
INPUT: $c$, and dimension $m$
\begin{algorithmic}[1]
\STATE Set $U_j \iid U(-1,1)$ for $j=1,\ldots, m$
\STATE Draw $r \sim \mathrm{Gamma}(\al = m+1,\beta = c)$
\STATE Set $V = r\cdot (U_1,\ldots, U_m)^\top$
\end{algorithmic}
OUTPUT: $V$
\label{alg:SampleLinfty}
\end{algorithm}

The sensitivity of a statistic $T$ is the largest amount that $T$ changes when one entry of $T$ is modified. Geometrically, the sensitivity of $T$ is the largest radius of $S_T$ measured by the norm of interest. For a norm ball $K \subset \RR^m$, the \emph{$K$-norm sensitivity} of $T$ is 
\[\Delta_K(T) = \sup_{d(X,X')=1} \lVert T(X) - T(X')\rVert_K = \sup_{u \in S_T} \lVert u \rVert_K .\]

\begin{algorithm}
\caption{Sampling  $K$-Norm Mechanism with Rejection Sampling \citep{awan2021structure}}
\label{RejectionAlgorithm}
\scriptsize
INPUT: $\ep$, statistic $T(X)$, $\ell_\infty$ sensitivity $\Delta_\infty(T)$, $K$-norm sensitivity $\Delta_K(T)$
\begin{algorithmic}[1]
\STATE Set $m = \mathrm{length} (T(X))$.
\STATE Draw $r \sim \mathrm{Gamma}(\al = m+1, \beta = \ep/\Delta_K(T))$
\STATE Draw $U_j \iid \mathrm{Uniform}(-\Delta_\infty(T), \Delta_\infty(T))$ for $j=1,\ldots, m$
\STATE Set $U = (U_1,\ldots, U_m)^\top$
\STATE If $U\in K$, set $N_K=U$, else go to 3)
\STATE Release $T(X) + r\cdot N_K$.
\end{algorithmic}
\label{alg:sampleK}
\end{algorithm}

\begin{algorithm}
\caption{$\ell_\infty$ Objective Perturbation \citep{awan2021structure}}
\scriptsize
INPUT: $X\in \mscr{ X}^n$, $\ep>0$, a convex set $\Theta \subset \RR^m$, a convex function $r: \Theta\rightarrow \RR$, a convex loss  $\hat{\mscr  L}(\ta; X) =\frac1n \sum_{i=1}^n \ell(\ta;x_i)$ defined on $\Theta$ such that $\nabla^2 \ell(\ta;x)$ is continuous in $\ta$ and $x$, $\De>0$ such that $\sup_{x,x'\in \mscr X} \sup_{\ta\in \Ta}\lVert \nabla \ell(\ta;x) - \nabla\ell(\ta;x')\rVert_\infty\leq \De$,  $\la>0$  is an upper bound on the eigenvalues of $\nabla^2\ell(\ta;x)$ for all $\ta\in \Theta$ and $x\in \mscr X$, and $q\in (0,1)$.
\begin{algorithmic}[1]
  \setlength\itemsep{0em}
  \STATE Set $\ga = \frac{\la}{\exp({\ep(1-q)})-1}$
\STATE Draw $V\in \RR^m$ from the density $f(V;\ep, \De)\propto \exp(-\frac{\ep q}{\De}\lVert V\rVert_\infty)$ using Algorithm \ref{alg:SampleLinfty}
\STATE Compute $\ta_{DP} = \arg\min_{\ta\in \Theta} \hat{\mscr L}(\ta;X) +\frac1n r(\theta)+ \frac{\ga}{2n} \ta^\top \ta + \frac{V^\top \ta}{n}$
\end{algorithmic}
OUTPUT: $\ta_{DP}$
\label{alg:ExtendedObjPert}
\end{algorithm}

The privacy mechanism consists of two components, each of dimension 2: to encode information about the parameters $a$ and $b$, we release $(\sum_{i=1}^n z_i,\sum_{i=1}^n z_i^2)+N_K$, where $N_K$ is from a $K$-norm mechanism that discuss in the following paragraphs; to learn $\beta_0$ and $\beta_1$, we run Algorithm \ref{alg:ExtendedObjPert}. In \citet[Section 4.1]{awan2021structure}, it was calculated that $\Delta_\infty=2$ and $\lambda=m/4$, where $m$ is the number of regression coefficients; in \citet[Section 4.2]{awan2021structure}, it was numerically found that $q=0.85$ optimized the performance of objective perturbation. \citet[Theorem 4.1]{awan2021structure} showed that the output of Algorithm \ref{alg:ExtendedObjPert} satisfies $\ep$-DP. 

\citet{awan2021structure} showed that the optimal $K$-norm mechanism (in terms of quantities such as stochastic tightness, minimum entropy, and minimum conditional variance) of a statistic $T(X)$ uses the norm ball, which is the convex hull of the sensitivity space. For the statistic $T(X) = (\sum_{i=1}^n z_i,\sum_{i=1}^n z_i^2)$, where $z_i \in [0,1]$, we can derive the sensitivity space as 
\begin{figure}
    \centering
    \includegraphics[width=.4\linewidth]{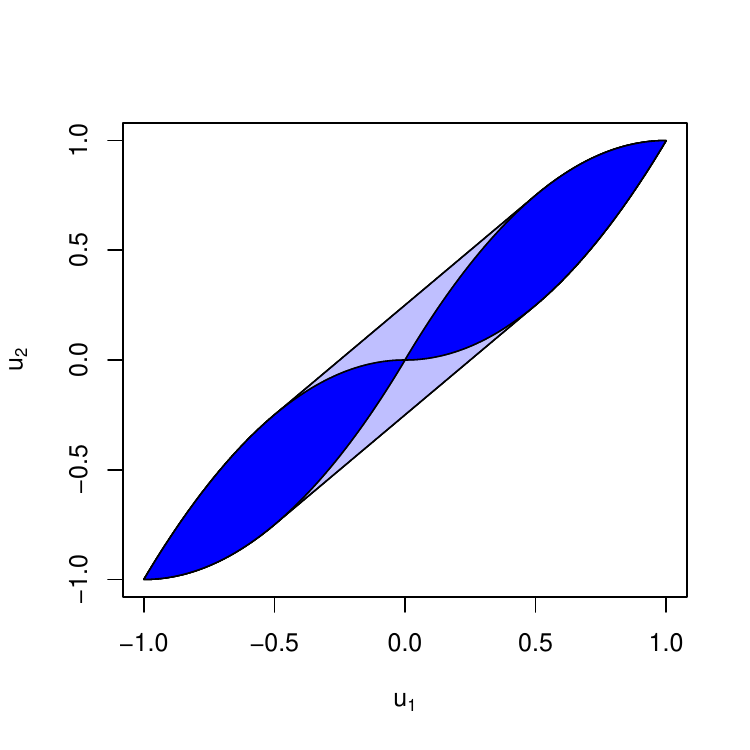}
    \caption{In dark blue is the sensitivity space derived in Equation \eqref{eq:sensitivity}. In light blue and dark blue is the convex hull of the sensitivity space, derived in Equation \eqref{eq:hull}. }
    \label{fig:sensitivityHull}
\end{figure}
\begin{equation}\label{eq:sensitivity}
    S_T = \left\{(u_1,u_2) \middle| \begin{array}{c}
u_1^2\leq u_2\leq 1-(u_1-1)^2\\
\text{or }\quad (1+u_1)^2-1\leq u_2\leq u_1^2
\end{array}\right\},
\end{equation}
and the convex hull can be expressed as 
\begin{equation}\label{eq:hull}
\mathrm{Hull}(S_T) = \left\{ (u_1,u_2) \middle| 
\begin{array}{cc}
(u_1+1)^2-1\leq u_2\leq -u_1^2,& u_1\leq -1/2\\
u_1-1/4\leq u_2\leq u_1+1/4,& -1/2<u_1\leq 1/2\\
u_1^2\leq u_2\leq 1-(u_1-1)^2,&1/2\leq u_1
\end{array}\right\}.\end{equation}
See Figure \ref{fig:sensitivityHull} for a visualization of $S_T$ and $\mathrm{Hull}(S_T)$. 
We use $K=\mathrm{Hull}(S_T)$ as the norm ball in Algorithm \ref{alg:sampleK} with $\Delta_K=1$ and $\Delta_\infty=1$,  which \citet{hardt2010geometry} showed satisfies $\ep$-DP. 

In the simulation generating the results in Figure 6 of Section 5.4, we let $a^*=b^*=0.5$, $\beta_0^*=0.5$, $\beta_1^*=2$, $R=200$, $\alpha=0.05$, $n=100,$ 200, 500, 1000, 2000, and $\ep=0.1$, 0.3, 1, 3, 10 in $\ep$-DP. For the repro sample method, the private statistics we release and use are $s=(\tilde{\beta}_0, \tilde{\beta}_1, \tilde T(X))$ where $(\tilde{\beta}_0$, $\tilde{\beta}_1)$ are the output of Algorithm \ref{alg:ExtendedObjPert} with $0.9\ep$-DP, $q=0.85$, $\lambda=1/2$, and $\tilde T(X)$ is the output of Algorithm \ref{alg:sampleK},  with private budget $0.1\ep$. When using Algorithm \ref{alg:SampleLinfty} and \ref{alg:sampleK}, we let $m=2$ as it is the dimension of the output. For the DP-CI-ERM \citep{wang2019differentially}, we use their Algorithm 1 (Objective Perturbation) with $\ep/2$-DP, $t=1/4$, $c=t/(2n)/(\mathrm{e}^{(1-q)\ep/2}-1)$, $q=0.85$, and we use their Algorithm 2 in \citep{wang2019differentially} to obtain the private estimations of the covariance matrix and the Hessian matrix, both with $\ep/4$-DP and the same $c$ as in the Objective Perturbation; in their Algorithm 5, we set $m=10,000$. For both the repro sample method and DP-CI-ERM, we compute the confidence interval within the range $[-10,10]$.

\subsection{Private Bernoullis with unknown \texorpdfstring{$n$}{n}}\label{s:bernoulli}

For this example, we derive a confidence interval for a population proportion $p^*$ from i.i.d. Bernoulli data, under unbounded-DP. 
\citet{awan2018differentially}, \citet{awan2020differentially}, and \citet{awan2023canonical} derived optimal private hypothesis tests and confidence intervals for Bernoulli data, under $(\ep,\de)$-DP as well as general $f$-DP, but assumed that the sample size $n$ was known as they worked in the \emph{bounded}-DP setting. In the framework of \emph{bounded}-DP, it is assumed that $n$ is public knowledge, and two databases are neighboring if they differ in one entry. On the other hand, in \emph{unbounded}-DP, it is assumed that even $n$ itself may be sensitive; in this framework, two databases of differing sizes are neighbors if one can be obtained from the other by adding or removing an entry (individual). DP inference has largely focused on the bounded-DP framework, as it is challenging to perform valid inference when $n$ is unknown. While some papers argue that a noisy estimate of $n$ can be plugged in to obtain asymptotically accurate inference (e.g., \citet{smith2011privacy}), ensuring valid finite sample inference is more challenging. 

 Call $x_1,\ldots, x_{n^*}\iid\mathrm{Bern}(p^*)$, where both $p^*$ and $n^*$ are unknown; so our full parameter is $\theta^* =(p^*,n^*)$. For privacy, we observe both a noisy count of 1's and a noisy count of 0's: $s^{(1)} = \sum_{i=1}^{n^*} x_i + N_1$, $s^{(2)} = n^*-\sum_{i=1}^{n^*} x_i + N_2$, where $N_1,N_2\iid N(0,1/\ep^2)$, which satisfies $\ep$-GDP as the $\ell_2$ sensitivity of $(\sum_{i=1}^{n^*}x_i,n^*-\sum_{i=1}^{n^*} x_i)$ is 1. Using the notation of Section 3, let $u^{(1)},u^{(2)},u^{(3)}\iid U(0,1)$, and set $G(\theta^*,u) = (F^{-1}(u^{(1)})+\Phi^{-1}(u^{(2)})/\ep,\ n^*-F^{-1}(u^{(1)})+\Phi^{-1}(u^{(3)})/\ep)$, where $F^{-1}$ is the quantile function of $\mathrm{Binom}(n^*,p^*)$ and $\Phi(\cdot)$ is the CDF of standard normal distribution. We see that $G(\theta^*,u)$ is equally distributed as $(s^{(1)},s^{(2)})$ described above. 

\begin{table}[t]
    \centering
    \begin{tabular}{c|cc}
    &Coverage&Width\\\hline
       Mahalanobis Depth  & 0.981 (0.004)& 0.198 (0.0006)\\
        Approximate Pivot &0.949 (0.007) &0.164 (0.0005)
    \end{tabular}
    \caption{$95\%$ confidence intervals for private Bernoullis with unknown $n$. The first row uses Mahalanobis depth on $(s^{(1)}, s^{(2)})$, and the second row uses Mahalanobis depth of the test statistics from Equation \eqref{eq:Tbinom}. For both intervals, an initial ($1-10^{-4}$)-CI for $n$ is used to reduce the nuisance parameter search. Parameters for the simulation are $n^*=100$, $p^*=0.2$, $\epsilon=1$, $R=200$, and the results were averaged over 1000 replicates. }
    \label{tab:Binom}
\end{table}

First, we consider the $95\%$ confidence interval built by using Mahalanobis depth, whose width and coverage are shown in Table \ref{tab:Binom}. We see that the coverage is 0.98,  indicating over-coverage. Alternatively, we consider the test statistic
\begin{equation}\label{eq:Tbinom}
    T_\theta(s) = \frac{s^{(1)}-\hat np}{\sqrt{\hat np(1-p)+(p^2+(1-p)^2)\ep^{-2}}},
\end{equation}
where $\hat n = \max\{s^{(1)}+s^{(2)},1\}$, which we can see is asymptotically distributed as $N(0,1)$, as $n^*\rightarrow \infty$.

We then apply our simulation-based methodology, but use $T_\theta(s),T_\theta(s_1(\theta)),\ldots, T_\theta(s_R(\theta))$. Note that we need to search over the nuisance parameter $n$ as well. We propose to first get a very conservative confidence interval for $n^*$ based on $(s^{(1)},s^{(2)})$ (say with coverage $1-10^{-4}$), which is easily done since $s^{(1)}+s^{(2)}\overset d= n^*+N(0,2/\ep^2)$, and $\ep$ is a known value. Then  we set the nominal coverage level of the repro interval for $\theta$ as $1-\alpha+10^{-4}$, and only search over the values of $n$ in its preliminary confidence interval. The idea of using a preliminary confidence set to limit the search space was also proposed in \citet{xie2022repro}.

In Table \ref{tab:Binom}, we implement the repro interval using both the Mahalanobis depth, with either $(s^{(1)},s^{(2)})$ or the approximate pivot $T_\theta(s)$ given in \eqref{eq:Tbinom}  using the preliminary search interval for $n^*$. In the simulation described in Table \ref{tab:Binom}, we see that using the approximate pivot $T_\theta(s)$ has better-calibrated coverage, and reduced width at 0.163 compared to 0.197 compared to the interval generated directly from $(s^{(1)},s^{(2)})$.

\begin{remark}
This example highlights one of the benefits of using Gaussian noise for privacy, rather than other sorts of noise distributions. Because many statistical quantities are approximately Normal, using Gaussian noise allows us to more easily construct approximate pivots. While many privacy mechanisms result in asymptotically normal sampling distributions (e.g., \citealp{smith2011privacy,wang2018statistical,awan2019benefits}), if the privacy noise is not itself Gaussian, then it has been found that such asymptotic approximations do not work well in finite samples \citep{wang2018statistical}. 
\end{remark}

{ 

\subsection{Private Mann–Whitney test}\label{s:dp_Mann–Whitney}

We use our simulation-based methodology to build a DP Mann–Whitney test with finite-sample type I error guarantees. \citet{couch2019differentially} built the first DP Mann–Whitney test where they used a parametric bootstrap method to determine the rejection region. We apply our simulation-based inference methodology using the same test statistic, and compare the two methods through a simulation study. We show that their test fails to control the type I error, while our method always has the type I error below the significance level. Furthermore, we compare against another approach for finite-sample DP hypothesis testing proposed by \citet{kazan2023test}, and we show that our results have better or comparable power to their results. 

We first review the sensitivity analysis of the non-private test statistic by \citet{couch2019differentially}. Let $D=(x_1,\ldots,x_n)$, and let $r_i$ be the rank of $x_i$ in $D$ for $i=1,\ldots,n$. Assume that $D$ contains two groups, $D_1=(x_1,\ldots, x_{n_1})$ and $D_2=(x_{n_1+1},\ldots, x_{n})$, where $n$ is fixed, $n_1$ is not publicly known, and $x_i$ are independently sampled from continuous distributions. We want to test whether the elements in $D_1$ and $D_2$ are from the same distribution. Let $m=\min(n_1, n-n_1)$, and $U_1=\sum_{i=1}^{n_1} r_i - \frac{n_1(n_1+1)}{2}$. The non-private Mann–Whitney test statistic is $U=\min(U_1,~ (n-n_1)n_1 - U_1)$. \citet{couch2019differentially} proved that the local sensitivity of $U$ is $(n-m)$ when the neighboring datasets can have one individual different in both value and group. By the definition of $m$, its global sensitivity is 1. As $n-m \leq n$ for any unknown $m$, we can derive from the local sensitivity result that the global sensitivity of $U$ is $n$. 

\citet{couch2019differentially} proposed an $(\ep, \delta)$-DP estimate of $(m, U)$ based on the local sensitivity of $U$ and global sensitivity of $m$. As the framework by \citet{kazan2023test} satisfies $\ep$-DP, to fairly compare \citep{couch2019differentially} with \citep{kazan2023test}, modify the approach of \citet{couch2019differentially} to satisfy $\ep$-DP by using the global sensitivity of $U$ and $m$ in the following way: we define $\tilde{m}=m+\frac{1}{\ep_m}u^{(1)}_{DP}$ and $\tilde{U}=U+\frac{n}{\ep_U}u^{(2)}_DP$ where $\ep_m+\ep_U = \ep$ and $u^{(i)}_{DP}\iid \mathrm{Laplace}(0,1)$, then, $s=(\tilde{m}, \tilde{U})$ satisfies $\ep$-DP. We can also replace the above Laplace mechanism with Gaussian mechanism such that $(\tilde{m}, \tilde{U})$ satisfies $\ep$-GDP. After obtaining the private estimate, \citet{couch2019differentially} used parametric bootstrap to obtain the $p$-value, and they mentioned that there was no theoretical guarantee but only empirical verification for the type I error control. 

In contrast to \citep{couch2019differentially}, \citet{kazan2023test} proposed the test of tests which has theoretically proven control of the type I error. In this method, \citet{kazan2023test} used subsample-and-aggregate \citep{nissim2007smooth} to transform a valid non-private test into a valid private test: They split the sensitive dataset into $k$ non-overlapping subsets, conducted the valid non-private hypothesis test with significance level $\alpha_0$ on each split, and built an $\ep$-DP test statistic by privatizing the number of rejections of these tests. As the number of rejections follows a binomial distribution $\mathrm{Binom}(k, \alpha_0)$ under the null hypothesis, \citet{kazan2023test} set the optimal private binomial test \citep{awan2018differentially} as their privatized test (under $\ep$-DP) to guarantee the type I error control. To transform their test to a $\ep$-GDP test, we simply change the Tulap distribution to a Gaussian distribution \citep{awan2023canonical}. Although \citep{kazan2023test} can be used in general test settings such as the Mann–Whitney test, its power is often much lower than \citep{couch2019differentially} even when their type I errors are similar, as shown by our simulation results in the remaining of this section. 

After introducing the approaches of \citep{couch2019differentially} and \citep{kazan2023test}, we demonstrate how to use simulation-based inference to analyze the DP summaries from \citet{couch2019differentially}.
Under the null hypothesis, 
\begin{center}
    $H_0:$ $D_1$ and $D_2$ are from the same distribution $F$,
\end{center}
the distribution of $U$ does not depend on $F$ but only depends on $m$. Therefore, we can simulate $U$ with $x_i\iid F=\mathrm{Unif}(0,1)$. For the repro method, we let $\hat{m} = \min(\max(\tilde{m}, 1), \lfloor \frac{n}{2} \rfloor)$ and compute the pivot test statistic, $T(s)=\frac{\tilde{U} - \frac{(n-\tilde{m})\tilde{m}}{2} - \frac{1}{\ep_m^2} }{\sqrt{\frac{\hat{m}(n - \hat{m})(n+1)}{12} +\frac{2n^2}{\ep_u^2} + \frac{(n-2\hat{m})^2}{2\ep_m^2} + \frac{1}{2\ep_m^4} }}$. We reject the null hypothesis if the $p$-value computed by our Theorem 4.1 is smaller than or equal to the significance level $\alpha$ where $\Theta_0$ includes all choices of $m$.

Given $\ep=1$, $\alpha=0.05$, $n=100$, we compare our method with \citep{couch2019differentially} and \citep{kazan2023test}
for the settings of $m=20,~ 30,~ 50$ under both $\ep$-DP and $\ep$-GDP. We manually choose the hyperparameter for \citep{kazan2023test} to obtain its best performance in these simulations. For repro and the $\ep$-DP version of \citep{couch2019differentially}, we try $\ep_m=0.2,~0.3,~0.4$. We use $R=1000$ in repro. For calculating the type I error, we use $x_i\iid \mathrm{Unif}(0,1)$, $i=1,\ldots, n$. For calculating the power, we use $x_i\iid \mathrm{Unif}(0,1)$, $i=1,\ldots, m$, and $x_j\iid \mathrm{Beta}(2,5)$, $j=m+1,\ldots, n$. The results are shown in Table \ref{tab:mann_whitney_epDP} for $\ep$-DP and Table \ref{tab:mann_whitney} for $\ep$-GDP. We can see that \citep{couch2019differentially} fails to control the type I error when $m=20$ while having the largest power in all settings. Our method controls the type I error well, similar to \citep{kazan2023test}, while improving their power from $0.324$ and $0.342$ to $0.380$ and $0.606$ when $m=30$ and $m=50$ respectively under $\ep$-DP, and improving uniformly and more significantly under $\ep$-GDP. The increased performance under GDP is likely due to the fact that GDP has a tighter composition result than $\ep$-DP, and both the repro and \citet{couch2019differentially} method rely on composition, whereas \citet{kazan2023test} does not. 

In the end of this section, we show that there is a tradeoff between the performance of the repro sample method and the computation cost.
When using the repro method for DP Mann–Whitney test, as we need to compute the $p$-value using $\sup_{\theta\in\Theta_0}$ where $\Theta_0$ includes all choices of $m$, it is necessary to search over $m$. In Table \ref{tab:mann_whitney_gdp_compareR}, we show that increasing $R$ gives us a better power when searching over $m$, although it requires longer computation time due to more extensive simulations. Intuitively, larger $R$ enables more accurate simulation of the test statistic distribution to improve the power, but this improvement may not be significant when $R$ is large enough. However, in Table \ref{tab:mann_whitney_gdp_compareR}, when $m$ is unknown, compared to the case that $m$ is known and we do not need to search over $m$, the power and type I error still increase for larger $R$ when $R$ is already large. Note that the type I errors are  under control for all $R$, and these results only indicate that the repro method is less conservative for larger $R$. This is because when we search over $m$, we may reject the null hypothesis for the true $m$ but accept the null for another $m$, which means we finally reject less cases under the null, i.e., smaller $R$ increases the conservativeness of repro.


\begin{table}[t]
    \centering
    \begin{tabular}{l|ccc|ccc}
    &  \multicolumn{3}{c|}{Type I error} & \multicolumn{3}{c}{Power} \\
    & $m=20$ & $m=30$ & $m=50$ & $m=20$ & $m=30$ & $m=50$ \\\hline
    Repro Sample ($\ep_m=0.2$) & 0.055  & 0.040 & 0.045  & 0.205 &  0.363 & 0.606 \\
    Repro Sample ($\ep_m=0.3$) & 0.048 & 0.037 & 0.043 & 0.239 & 0.380 & 0.547 \\  
    Repro Sample ($\ep_m=0.4$) & 0.044  & 0.038  & 0.041  & 0.219 & 0.320 & 0.446 \\\hline  
    \citep{couch2019differentially} ($\ep_m=0.2$) & 0.112  & 0.065  & 0.051  & 0.437 & 0.522 & 0.641 \\
    \citep{couch2019differentially} ($\ep_m=0.3$) & 0.083 & 0.056 & 0.052 & 0.381 & 0.474 &  0.587 \\ 
    \citep{couch2019differentially} ($\ep_m=0.4$) & 0.069  & 0.053  & 0.055  & 0.325 & 0.388 & 0.510  \\\hline 
    \citep{kazan2023test} ($\ep=1$) & 0.046 &  0.051  & 0.047  & 0.259  & 0.324 & 0.342 \\
    \end{tabular}
    \caption{{ Privatized Mann–Whitney test under $\ep$-DP. Parameters for the simulation are $n=100$, $\ep=1$, $\alpha=0.05$. For Repro Sample and \citep{couch2019differentially}, the privacy budget $\ep$ is split into $\ep_m$ and $\ep_U=\ep-\ep_m$ for privatizing $m=\min(n_1,n_2)$ and $U$, the non-private Mann–Whitney test statistic. For calculating the type I error, we use $x_i\iid \mathrm{Unif}(0,1)$, $i=1,\ldots, n$; For calculating the power, we use $x_i\iid \mathrm{Unif}(0,1)$, $i=1,\ldots, m$, and $x_j\iid \mathrm{Beta}(2,5)$, $j=m+1,\ldots, n$. The results are over 1000 replicates.}}
    \label{tab:mann_whitney_epDP}
\end{table}

\begin{table}[t]
    \centering
    \begin{tabular}{l|ccc|ccc}
    &  \multicolumn{3}{c|}{Type I error} & \multicolumn{3}{c}{Power} \\
    & $m=20$ & $m=30$ & $m=50$ & $m=20$ & $m=30$ & $m=50$ \\\hline
    Repro Sample ($\ep_m^2=0.1$) & 0.041  & 0.039 & 0.054  & 0.456 &  0.642 & 0.822 \\
    Repro Sample ($\ep_m^2=0.2$) & 0.039 & 0.037 & 0.056 & 0.477 & 0.657 & 0.812 \\  
    Repro Sample ($\ep_m^2=0.5$) & 0.038  & 0.037  & 0.051  & 0.427 & 0.581 & 0.727 \\\hline  
    \citep{couch2019differentially} ($\ep_m^2=0.1$) & 0.097  & 0.069  & 0.051  & 0.609 & 0.727 & 0.827 \\
    \citep{couch2019differentially} ($\ep_m^2=0.2$) & 0.083  & 0.057  & 0.051  & 0.597 & 0.721 & 0.813 \\ 
    \citep{couch2019differentially} ($\ep_m^2=0.5$) & 0.071  & 0.051  & 0.053  & 0.520 & 0.638 & 0.728  \\\hline 
    \citep{kazan2023test} ($\ep=1$) & 0.042 &  0.043  & 0.035  & 0.351  & 0.411 & 0.475 \\
    \end{tabular}
    \caption{{ Privatized Mann–Whitney test under $\ep$-GDP. Parameters for the simulation are $n=100$, $\ep=1$, $\alpha=0.05$. For Repro Sample and \citep{couch2019differentially}, the privacy budget $\ep$ is split into $\ep_m$ and $\ep_U=\sqrt{\ep^2-\ep_m^2}$ for privatizing $m=\min(n_1,n_2)$ and $U$, the non-private Mann–Whitney test statistic. For calculating the type I error, we use $x_i\iid \mathrm{Unif}(0,1)$, $i=1,\ldots, n$; For calculating the power, we use $x_i\iid \mathrm{Unif}(0,1)$, $i=1,\ldots, m$, and $x_j\iid \mathrm{Beta}(2,5)$, $j=m+1,\ldots, n$. The results are over 1000 replicates.}}
    \label{tab:mann_whitney}
\end{table}

\begin{table}[t]
    \centering
    \begin{tabular}{l|ccc|ccc}
    &  \multicolumn{3}{c|}{Search $m$ ($m$ unknown)} & \multicolumn{3}{c}{Fix $m$ ($m$ known)} \\
    &  Type I error & Power & Time (s) &  Type I error & Power & Time (s) \\\hline
    $R=100$ & 0.023 & 0.397 &   8.227 & 0.052 & 0.519 & 3.195 \\
    $R=200$ & 0.030 & 0.441 &   12.849 & 0.054 & 0.526 & 2.270\\
    $R=500$ & 0.035 & 0.464 &   31.645 & 0.052 & 0.530 & 3.645\\
    $R=1000$ & 0.039 & 0.477 &   63.392 & 0.049 & 0.529 & 4.511\\
    $R=2000$ & 0.042 & 0.488 &   128.635 & 0.050 & 0.527 & 8.219\\
    $R=5000$ & 0.042 & 0.494 &   321.166 & 0.052 & 0.532 & 21.554 \\
    \end{tabular}
    \caption{{ Comparison of different $R$ in the repro sample method for the privatized Mann–Whitney test under $\ep$-GDP. Parameters for the simulation are $n=100$, $\ep=1$, $\alpha=0.05$, $m=20$, $\ep_m^2=0.2$. The privacy budget $\ep$ is split into $\ep_m$ and $\ep_U=\sqrt{\ep^2-\ep_m^2}$ for privatizing $m=\min(n_1,n_2)$ and $U$, the non-private Mann–Whitney test statistic.
    Using the repro method requires the search of $m$, and we compare it to the case that $m=20$ is given as prior knowledge.
    For calculating the type I error, we use $x_i\iid \mathrm{Unif}(0,1)$, $i=1,\ldots, n$; For calculating the power, we use $x_i\iid \mathrm{Unif}(0,1)$, $i=1,\ldots, m$, and $x_j\iid \mathrm{Beta}(2,5)$, $j=m+1,\ldots, n$. The results are over 1000 replicates.}}
    \label{tab:mann_whitney_gdp_compareR}
\end{table}
}


\end{document}